\DeclareMathAlphabet{\mathpzc}{OT1}{pzc}{m}{it}
\newtheorem{theorem}{Theorem}[section]
\newtheorem{theorem-definition}[theorem]{Theorem-Definition}
\newtheorem{lemma-definition}[theorem]{Lemma-Definition}
\newtheorem{definition-prop}[theorem]{Proposition-Definition}
\newtheorem{prop}[theorem]{Proposition}
\newtheorem{lemma}[theorem]{Lemma}
\newtheorem{cor}[theorem]{Corollary}
\newtheorem{definition}[theorem]{Definition}
\theoremstyle{definition}
\newtheorem{question}[theorem]{Question}
\newtheorem{remark}[theorem]{Remark}
\newtheorem{example}[theorem]{Example}
\newcommand{\LL}{\ensuremath{\mathbb{L}}}
\newcommand{\Z}{\ensuremath{\mathbb{Z}}}
\newcommand{\Q}{\ensuremath{\mathbb{Q}}}
\newcommand{\R}{\ensuremath{\mathbb{R}}}
\newcommand{\C}{\ensuremath{\mathbb{C}}}
\newcommand{\Pro}{\ensuremath{\mathbb{P}}}
\newcommand{\A}{\ensuremath{\mathbb{A}}}
\renewcommand{\R}{\ensuremath{\mathbb{R}}}
\renewcommand{\C}{\ensuremath{\mathbb{C}}}
\renewcommand{\Pro}{\ensuremath{\mathbb{P}}}
\renewcommand{\A}{\ensuremath{\mathbb{A}}}
\newcommand{\Spec}{\ensuremath{\mathrm{Spec}\,}}
\newcommand{\Spf}{\ensuremath{\mathrm{Spf}\,}}
\newcommand{\redu}{\mathrm{red}}
\newcommand{\rig}{\mathrm{rig}}
\newcommand{\an}{\mathrm{an}}
\newcommand{\sing}{\mathrm{sing}}
\newcommand{\Aut}{\mathrm{Aut}}
\newcommand{\Aff}{\mathrm{Aff}}
\newcommand{\Sch}{\mathrm{Sch}}
\newcommand{\Sets}{\mathrm{Sets}}
\newcommand{\Var}{\mathrm{Var}}
\newcommand{\Sm}{\mathrm{Sm}}
\newcommand{\GSch}{G\textendash \Sch}
\newcommand{\KVar}[1]{\ensuremath{K_0(\Var_{#1})}}
\newcommand{\Kmod}[1]{\ensuremath{K_0^{\mathrm{mod}}(\Var_{#1})}}
\newcommand{\KG}[1]{\ensuremath{K_0(G\textendash\Var_{#1})}}
\newcommand{\KR}[1]{\ensuremath{K^R_0(\Var_{#1})}}
\numberwithin{equation}{section} \hyphenpenalty=6000
\begin{document}
\title[Finite group actions and weak N\'eron models]{Finite group actions,
rational fixed points and weak N\'eron models}
\author{H\'el\`ene Esnault}
\address{
Universit\"at Duisburg-Essen, Mathematik, 45117 Essen, Germany}
\email{esnault@uni-due.de}
\author{Johannes Nicaise}
\address{ Katholieke Universiteit Leuven, Departement Wiskunde, Celestijnenlaan
200B,
B-3001 Leuven, Belgium } \email{Johannes.Nicaise@wis.kuleuven.be}
\dedicatory{\`A la m\'emoire d'Eckart Viehweg, en souvenir de la lumi\`ere dont
il nous inondait}
\thanks{The first author was partially supported by  the DFG Leibniz Preis, the SFB/TR45, the ERC
Advanced Grant 226257. The second author was partially supported
by the Fund for Scientific Research - Flanders (project
G.0415.10).}
\begin{abstract} If $G$ is a finite $\ell$-group acting on an affine space
$\A^n_K$ over a finite  field $K$ of cardinality prime to $\ell$,
Serre \cite{serre-finite} shows that there exists a rational fixed
point. We generalize this to the case where $K$ is a henselian
discretely valued field of characteristic zero with algebraically
closed residue field and with residue characteristic different
from $\ell$. We also treat the case where the residue field is
finite of cardinality $q$ such that $\ell$ divides $q-1$. To this
aim, we study group actions on weak N\'eron models.
\end{abstract} \maketitle

\section{Introduction}
Let $\ell$ be a prime number, and let $G$ be a finite $\ell$-group
acting on an affine space $\A_K^n$ over a field $K$ of
characteristic different from $ \ell$ (here we view $\A^n_K$ as a
$K$-variety, not as the $K$-group $\mathbb{G}^n_{a,K}$). Serre
observes in \cite{serre-finite} that the closed subscheme $
(\A^n_K)^G$ of fixed points is not empty. By standard arguments,
it is enough to show this if $K$ is a finite field. Over a finite
field, non-trivial orbits of the set of rational points have
cardinality divisible by $\ell$, thus there must exist fixed
points that are rational. Serre raises the question
\cite{serre-finite} whether rational fixed points always exist,
that is, whether
 $ (\A^n_K)^G(K)$ is empty or not.

The aim of this note is to discuss Serre's question for a field
$K$ whose Galois group is close to $\widehat{\mathbb{Z}}$, the
Galois group of a finite field. The main theorem (\ref{thm-rat})
and its corollary (\ref{cor-finite}) assert that if $K$  is a
henselian discretely valued field of characteristic $0$, with
algebraically closed residue field and residue characteristic
different from $\ell$, then Serre's question has a positive
answer. Furthermore, if $K$ is henselian with finite residue field
of cardinality $q$ such that $\ell$ divides $q-1$, then Serre's
question has a positive answer as well.

While Serre's counting argument sketched above over a finite field
$K$ is very elementary, one can also argue cohomologically. Serre
shows that, for any base field $K$, the Gysin  map in compactly
supported \'etale cohomology
\begin{equation}\label{eq-gysin}H_c^{*-2c}((\A^n_K)^G \times_K
K^a, \mathbb{F}_\ell(-c))\xrightarrow{\gamma_{K^a}}
H_c^*(\A^n_{K^a}, \mathbb{F}_\ell)\end{equation} is an
isomorphism, where $K^a$ is an algebraic closure of $K$ and $c$ is
the codimension of $(\A^n_K)^G$ in $\A^n_K$. Now assume that $K$
is finite, of cardinality $q$. Since the Gysin map is
Galois-equivariant and $\A^n_K$ has trivial cohomology, it follows
from the isomorphism \eqref{eq-gysin} that the trace of
 the geometric Frobenius on the graded cohomology space
$$\bigoplus_{i=0}^{2n-2c} H_c^{i}((\A^n_K)^G \times_K K^a, \mathbb{F}_\ell)$$
 is equal to $q^{n-c}$.
The Grothendieck-Lefschetz trace formula allows one to conclude
the existence of a rational fixed point.

The second author has shown that a similar trace formula exists
when the base field is a strictly henselian discretely valued
field $K$ of equicharacteristic zero (there are partial results in
arbitrary characteristic \cite{nicaise-tame}, but there the
situation is much more subtle due to issues of wild ramification).
He showed in \cite{nicaise} that, for any $K$-variety $X$, the
trace of any monodromy operator (i.e., any topological generator
of the absolute Galois group ${\rm Gal}(K^a/K)$) on the compactly
supported $\ell$-adic cohomology of $X$ is equal to the {\em
rational volume} $s(X)$ of $X$. The rational volume is a certain
measure for the set of rational points of $X$ (see Definition
\ref{def-ratl-vol}). If $X$ admits a weak N\'eron model
$\mathcal{X}$ over the valuation ring of $K$, then $s(X)$ equals
the $\ell$-adic Euler characteristic of the special fiber of
$\mathcal{X}$. The invariant $s(X)$ vanishes if $X(K)=0$, so that
it can be used to detect the existence of a rational point. The
rational volume is a specialization of Loeser and Sebag's {\em
motivic Serre invariant} \cite{motrigid} and its generalization in
\cite{nicaise}; see Section \ref{sec-motserre}.

The existence of such a trace formula is surprising, because one
cannot characterize rational points with intersection theory over
 $K^a$, since the action of the
monodromy operator cannot be represented by an algebraic cycle.
The proof of the trace formula uses resolution of singularities
and explicit computations on nearby cycles. It is a challenging
problem to find a more conceptual proof that does not use
resolution of singularities.

When applied to Serre's question, the trace formula and the
isomorphism (\ref{eq-gysin}) imply that the rational volume of
$(\A^n_K)^G$ is equal to one, so that $(\A^n_K)^G(K)$ is
non-empty. We also consider the case where $K$ has mixed
characteristic. Here we cannot use the trace formula due to issues
of wild ramification. Rather than using the whole information
contained in the rational volume, we consider it only modulo
$\ell$. Under the assumption that the residue field of $K$ has
characteristic different from $\ell$, we show that
$s(\mathbb{A}^n_K)$ and $s((\mathbb{A}^n_K)^G)$ are congruent
modulo $\ell$. Intuitively, this can still be considered as a
manifestation of the isomorphism (\ref{eq-gysin}) on the level of
the rational volume. Since $s(\A^n_K)=1$, we find that
$(\mathbb{A}^n_K)^G(K)$ is non-empty.


The geometric part of the work consists in showing the
compatibility  of weak N\'eron models with group actions.

\subsection*{Acknowledegements:}  This work was started during a
summer school at the Feza G\"ursey Institute in Istanbul in June
2010. We thank the Turkish mathematicians for their hospitality.
 We thank Nguy\^e\~{n}  Duy T\^an and Jean-Pierre Serre for
careful reading and interesting questions. The second author is
grateful to Raf Cluckers for inspiring discussions.  We thank
 the referee for a friendly precise reading and for valuable suggestions. In
particular, Lemma \ref{lemm-finitenorm} is due to him.


\section{Preliminaries and notations}\label{sec-notation}

\subsection{Notations} \label{subsection-notations}
We denote by $R$ a discrete valuation ring with quotient field $K$
and perfect residue field $k$. We denote by $p$ the characteristic
exponent of $k$ (thus $p=1$ if $k$ has characteristic zero, and
$p$ equals the characteristic of $k$ if it is non-zero) and by
$\frak{m}$ the maximal ideal of $R$. We fix a strict henselization
$R^{sh}$ of $R$, and we denote by $K^{sh}$ its quotient field. The
residue field $k^s$ of $R^{sh}$ is a separable closure of $k$. We
denote by $v_{K}$ the discrete valuation on $K$. We choose a value
$\varepsilon$ in $]0,1[$ and we define an absolute value
$|\cdot|_{K}$ on $K$ by $|0|_{K}=0$ and
$$|x|_{K}=\varepsilon^{v_K(x)}$$ for $x\in K^{\times}$.
If $k$ is finite, of cardinality $q$, we take
$\varepsilon=q^{-1}$. The absolute value $|\cdot|_K$ extends
canonically to $K^{sh}$.

For any $R$-scheme $\mathcal{X}$, we put
\begin{eqnarray*}
\mathcal{X}_k&=&\mathcal{X}\times_R k
\\ \mathcal{X}_K&=&\mathcal{X}\times_R K.
\end{eqnarray*}

For every scheme $S$, we denote by $S_{\redu}$ the maximal reduced
closed subscheme of $S$. An $S$-{\it variety} is a reduced
separated $S$-scheme of finite presentation.

If $F$ is a field with separable closure $F^s$,  $X$ is a
separated $F$-scheme of finite type, and $\ell$ is a prime
invertible in $F$,  we define the {\it Euler characteristic with
proper supports} of $X$ by
$$\chi_c(X)=\sum_{i\geq 0}(-1)^i \mathrm{dim}\, H^i_c(X\times_F
F^s,\Q_\ell).$$
 This Euler characteristic is independent of $\ell$. If $F$ has characteristic
zero,
  this follows from comparison with singular cohomology; if $F$ is finite, from
the
  cohomological interpretation of the zeta function ($\chi_c(X)$ is equal to
minus the degree of the Hasse-Weil zeta function of $X$); the general case is
deduced from the finite field case
  by a spreading out argument and proper base change.

For any set $\mathscr{S}$, we denote its cardinality by
$|\mathscr{S}|$.
\subsection{Group actions}
Let $G$ be a finite group. We say that an action of $G$ on a
scheme $S$ is {\em good} if every orbit is contained in an affine
open subscheme of $S$. This condition is automatically fulfilled,
for instance, if $S$ is quasi-projective over an affine scheme, by
\cite[3.3.36(b)]{liu}.

Let $S$ be a scheme, with trivial $G$-action. We denote by
$(\Sch/S)$ the category of $S$-schemes, and by $(\GSch/S)$ the
category of $S$-schemes with $G$-action. By
\cite[Propositon~3.1]{edixhoven},
the functor
$$(\Sch/S)\to (\GSch/S)$$ that endows an $S$-scheme with the trivial $G$-action
has a right adjoint, which we denote by
$$(\cdot)^G:(\GSch/S)\to (\Sch/S):X\mapsto X^G.$$
The $S$-scheme $X^G$ is called the {\em fixed locus} of the
$G$-action on $X$. By definition, it represents the functor
$$(\Sch/S)\to (\Sets):T\mapsto X(T)^G.$$ The functor $(\cdot)^G$
commutes with arbitrary base change $S'\to S$. If $X$ is separated
over $S$, then the tautological morphism
$$X^G\to X$$ is a closed immersion \cite[ loc.cit.]{edixhoven}.

\begin{prop}[
also
\cite{edixhoven}, Proposition 3.4]\label{prop-smooth} If $X\to S$
is smooth, and $|G|$ is invertible on $X$, then $X^G\to S$ is
smooth.
\end{prop}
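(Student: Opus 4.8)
The plan is to verify the infinitesimal lifting criterion for $X^G\to S$: the hypothesis on $|G|$ will allow one to average an arbitrary lift over $G$ into a $G$-fixed one. Since a morphism is smooth if and only if it is locally of finite presentation and formally smooth, and since $X^G\to S$ is locally of finite presentation — this follows from the local description of $X^G$ in \cite{edixhoven} (the defining ideal of $X^G$ inside $X$ is locally generated by the finitely many functions $g(b)-b$, $g\in G$, with $b$ ranging over a finite generating set), together with the fact that the smooth morphism $X\to S$ is locally of finite presentation — the content of the assertion is the formal smoothness of $X^G\to S$, which I now establish.

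Let $T$ be an affine $S$-scheme, $I\subseteq\mathcal{O}_T$ a square-zero ideal, $T_0\subseteq T$ the closed subscheme it defines, and $x_0\in X^G(T_0)=X(T_0)^G$; I must produce a lift of $x_0$ in $X^G(T)=X(T)^G$. Since $X\to S$ is smooth, the set $L$ of $S$-morphisms $x\colon T\to X$ extending $x_0$ is non-empty and is a torsor under the $\mathcal{O}_{T_0}$-module $M:=\mathrm{Hom}_{\mathcal{O}_{T_0}}(x_0^{*}\Omega^{1}_{X/S},I)$. The group $G$ acts on $L$ by $g\cdot x:=g\circ x$: indeed $g\circ x$ again extends $g\circ x_0=x_0$, since $G$ acts trivially on $S$ and $x_0$ is $G$-fixed. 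It also acts additively on $M$, through the automorphism of $x_0^{*}\Omega^{1}_{X/S}$ induced by $g^{*}\colon\Omega^{1}_{X/S}\to\Omega^{1}_{X/S}$ and the equality $x_0=g\circ x_0$; crucially $G$ acts only on the source of this $\mathrm{Hom}$, as it does not act on $T$ and a fortiori not on $I$. One checks that the action on $L$ is equivariant for the torsor structure, i.e. $g\cdot(x+\delta)=(g\cdot x)+(g\cdot\delta)$ for $x\in L$ and $\delta\in M$. What remains is to find a $G$-fixed point of $L$.

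This is where the hypothesis enters. Since $|G|$ is invertible in $\mathcal{O}_X$, it is invertible in $\mathcal{O}_{T_0}$ after pulling back along $x_0$, so multiplication by $|G|$ is bijective on $M$; hence $H^{i}(G,M)=0$ for every $i>0$. Concretely, fix any $x\in L$ and write $g\cdot x=x+c_g$ with $c_g\in M$; then $g\mapsto c_g$ is a $1$-cocycle, and $\bar x:=x+|G|^{-1}\sum_{g\in G}c_g$ is a $G$-fixed element of $L$, hence an element of $X(T)^G=X^G(T)$ lifting $x_0$. This establishes formal smoothness, and with it the proposition. I expect the main obstacle to be the bookkeeping in the previous paragraph — matching the $G$-action on the set of lifts with a $G$-action on the deformation module $M$ and verifying equivariance — since it is precisely the $M$-valued cocycle so produced that the invertibility of $|G|$ destroys; and the conclusion does fail in general once this hypothesis is dropped.
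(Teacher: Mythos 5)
Your argument is correct: the paper itself gives no proof of this proposition, deferring entirely to the citation of Edixhoven's Proposition 3.4, and your proof is essentially the argument given there --- verify local finite presentation from the explicit generators $g(b)-b$ of the defining ideal, then check formal smoothness by noting that the set of lifts over a square-zero extension is a $G$-equivariant torsor under $\mathrm{Hom}_{\mathcal{O}_{T_0}}(x_0^{*}\Omega^{1}_{X/S},I)$, whose positive-degree $G$-cohomology vanishes because $|G|$ is invertible, so the $1$-cocycle $c_g$ can be averaged away. The details (equivariance of the torsor structure, the cocycle identity $c_{gh}=c_g+g\cdot c_h$, and the verification that $x+|G|^{-1}\sum_g c_g$ is fixed) all check out.
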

\begin{remark}
Proposition \ref{prop-smooth} was shown in \cite[Prop.\,1.3]{I}
 when $S$ is the spectrum of an algebraically closed field.
\end{remark}

\subsection{Grothendieck rings}
Let $F$ be a field.  We denote by $\KVar{F}$ the {\it Grothendieck
ring of varieties over} $F$. As an abelian group, $\KVar{F}$ is
defined by the following presentation:
\begin{itemize}
\item {\em generators:} isomorphism classes $[X]$ of separated
$F$-schemes of finite type $X$,
 \item {\em relations:} if $X$ is
a separated $F$-scheme of finite type and $Y$ is a closed
subscheme of $X$, then
$$[X]=[Y]+[X\setminus Y].$$
These relations are called {\em scissor relations}.
\end{itemize}
By the scissor relations, one has $[X]=[X_{\redu}]$  for every
separated $F$-scheme of finite type $X$, and $[\emptyset]=0$. We
endow the group $\KVar{F}$ with the unique ring structure such
that
$$[X]\cdot [X']=[X\times_F X']$$ for all $F$-varieties $X$
and $X'$. The identity element for the multiplication is the class
$[\Spec F]$ of the point. For a detailed survey on the
Grothendieck ring of varieties, we refer to \cite{NiSe-K0}.

We denote by
$$\Kmod{F}$$ the {\em modified Grothendieck ring of
varieties over} $F$ \cite[\S\,3.8]{NiSe-K0}. This is the quotient
of $\KVar{F}$ by the ideal $\mathcal{I}_F$ generated by elements
of the form
$$[X]-[Y]$$
where $X$ and $Y$ are separated $F$-schemes of finite type such
that there exists a finite, surjective, purely inseparable
$F$-morphism
$$Y\to X.$$
For instance, if $F$ has characteristic $p>0$, $X\to S$ is a
morphism of $F$-varieties and $X^{(p)}$ is the pullback of $X$
over the absolute Frobenius morphism on $S$, then $[X^{(p)}]=[X]$
in $\Kmod{F}$, since the relative Frobenius
$$\mathrm{Frob}_{X/S}:X\to X^{(p)}$$ is a finite, surjective,
purely inseparable $F$-morphism.

If $F$ has characteristic zero, then it is easily seen that
$\mathcal{I}_F$ is the zero ideal \cite[3.11]{NiSe-K0}. It is not
known if $\mathcal{I}_F$ is non-zero if $F$ has positive
characteristic. In particular, if $F'$ is a non-trivial finite
purely inseparable extension of $F$, it is not known whether
$[\Spec F']\neq 1$ in $\KVar{F}$.

Let $G$ be a finite group.  We denote by $\KG{F}$ the {\it equivariant
Grothendieck ring of $F$-varieties with good $G$-action}. It is the
quotient of the free abelian group generated by isomorphism
classes $[X]$ of separated $F$-schemes of finite type $X$ with
good $G$-action, by the subgroup generated by elements of the form
$[X]-[Y]-[X\setminus Y]$ with $X$ a separated $F$-scheme of finite
type with good $G$-action and $Y$ a $G$-stable closed subscheme of
$X$. The product on $\KG{F}$ is defined by
$$[X]\cdot [X']=[X\times_F X']$$ where $G$ acts diagonally on
$X\times_F X'$.

 With a slight abuse of notation, we denote by the same symbol $\LL_F$
the class of $\A^1_F$ -- with trivial $G$-action, if applicable --
in all of the above Grothendieck rings.

If $R$ has equal characteristic, then we set
$$\KR{k}=\KVar{k}.$$

If $R$ has mixed characteristic, then
we set $$\KR{k}=\Kmod{k}.$$

There exists a unique ring morphism
$$\chi_c:\KR{k}/(\LL_k-1)\to \Z$$ that sends the class of a separated
$k$-scheme of finite type $X$ to the Euler characteristic with
proper supports $\chi_c(X)$ defined in section
\ref{subsection-notations} (see \cite[4.2 and 4.14]{NiSe-K0}).
Moreover, if $k$ is finite, of cardinality $q$, there exists a
unique ring morphism
$$\sharp:\KR{k}/(\LL_k-1)\to \Z/(q-1)$$ that sends the class of a separated
$k$-scheme of finite type $X$ to the residue class modulo $(q-1)$
of  $|X(k)|$ (the cardinality of the set of $k$-rational points on
$X$) \cite[ loc.cit.]{NiSe-K0}.

\section{The Serre invariant}\label{sec-motserre}
\begin{definition}
We say that a $K$-variety $X$ is {\em bounded} if the set
$X(K^{sh})$ is bounded in $X$ in the sense of \cite[1.1.2]{neron}.
\end{definition}
Every proper $K$-variety is bounded, by \cite[1.1.6]{neron}. If
$n$ is a strictly positive integer, then a closed subvariety $X$
of
$$\A^n_K=\Spec K[x_1,\ldots,x_n]$$ is bounded if and only if, for
every $i\in \{1,\ldots,n\}$, the function
$$X(K^{sh})\to \R^+:a\mapsto |x_i(a)|_K$$ is bounded. In particular, $\A^n_K$ is
not bounded.

\begin{prop}[\cite{neron}, Corollary 3.5.7]\label{prop-bounded}
A $K$-variety $X$ is bounded if and only if there exists an
$R$-variety $\mathcal{X}$ endowed with a $K$-isomorphism
$$\mathcal{X}_K\to X$$ such that the natural map
$$\mathcal{X}(R^{sh})\to X(K^{sh})$$ is a bijection.
\end{prop}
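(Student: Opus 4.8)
The plan is to prove both implications directly, using the notion of boundedness from \cite[1.1.2]{neron} and the standard construction of weak Néron models by stratified smoothening.

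First I would treat the easy direction. Suppose $\mathcal{X}$ is an $R$-variety with $\mathcal{X}_K \cong X$ such that $\mathcal{X}(R^{sh}) \to X(K^{sh})$ is bijective. By the valuative criterion of properness (or rather by a direct estimate using the local structure of $\mathcal{X}$), the set $\mathcal{X}(R^{sh})$, viewed inside $X(K^{sh})$ via the isomorphism on generic fibers, is bounded: picking a closed $R$-immersion of an affine chart of $\mathcal{X}$ into some $\A^m_R$, every coordinate function takes values in $R^{sh}$ on $\mathcal{X}(R^{sh})$, hence has absolute value $\leq 1$. Since $\mathcal{X}$ is covered by finitely many such affine charts and $\mathcal{X}(R^{sh}) = X(K^{sh})$, the set $X(K^{sh})$ is bounded in $X$; this is exactly the definition of $X$ being bounded.

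For the converse — the substantial direction — suppose $X$ is bounded. The strategy is: (1) spread $X$ out to \emph{some} flat separated $R$-model $\mathcal{X}_0$ of finite type (e.g. by clearing denominators in defining equations of an affine cover, then gluing); (2) replace $\mathcal{X}_0$ by a model $\mathcal{X}_1$ for which \emph{every} point of $X(K^{sh})$ extends to $\mathcal{X}_1(R^{sh})$ — here boundedness is used to guarantee that finitely many such "dilatations" or coordinate-bounding modifications suffice to capture all of $X(K^{sh})$ (this is where the definition of boundedness via bounded subsets enters decisively); (3) apply Néron's smoothening process \cite[Chapter 3]{neron}: there is a proper morphism $\mathcal{X}_2 \to \mathcal{X}_1$, an isomorphism on generic fibers, such that the $R^{sh}$-points of $\mathcal{X}_1$ all lift through the smooth locus $\mathcal{X}_2^{\mathrm{sm}}$; take $\mathcal{X} = \mathcal{X}_2^{\mathrm{sm}}$. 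Then $\mathcal{X}$ is smooth over $R$, so $\mathcal{X}(R^{sh}) \to \mathcal{X}_K(K^{sh}) = X(K^{sh})$ is injective with image exactly the points that extend, and by construction that image is all of $X(K^{sh})$; hence the map is bijective, as required.

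The main obstacle is step (2): ensuring that a \emph{single} finite-type $R$-model absorbs the entire (infinite) set $X(K^{sh})$ of $K^{sh}$-points into its $R^{sh}$-points. Naively, different points might require mutually incompatible modifications, and one could be forced into an infinite process. This is precisely where boundedness is indispensable — it says $X(K^{sh})$ sits inside a bounded, hence "finitely controlled", region, so that after finitely many affine patches and finitely many scaling adjustments of coordinates one catches every point; without boundedness (e.g. for $X = \A^1_K$) the construction genuinely fails. Once this finiteness is in hand, Néron's smoothening theorem is a black box that delivers the rest. I would therefore structure the proof so that the bounded direction reduces, after an affine-cover argument, to the case $X \subseteq \A^n_K$ closed, where boundedness of each coordinate function on $X(K^{sh})$ makes step (2) transparent, and then invoke \cite[3.4/2 and 3.5.7]{neron} verbatim for the smoothening.
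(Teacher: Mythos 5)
The paper does not prove this proposition at all: it is quoted from \cite{neron}, Corollary 3.5.7, with no in-text argument, so I can only judge your attempt on its own merits. Your forward implication (model $\Rightarrow$ bounded) is correct: every morphism $\Spec R^{sh}\to\mathcal{X}$ factors through any affine open containing the image of the closed point, so a finite affine cover of $\mathcal{X}$ with closed $R$-immersions into affine spaces exhibits $X(K^{sh})=\mathcal{X}(R^{sh})$ as a finite union of subsets on which all coordinates lie in $R^{sh}$, hence have absolute value at most $1$; this is the definition of boundedness. (The aside about the valuative criterion of properness is irrelevant, but you discard it yourself.)

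The converse has two problems. First, you have silently replaced the statement by Theorem \ref{thm-weakner}: Proposition \ref{prop-bounded} asks only for an $R$-variety $\mathcal{X}$ with $\mathcal{X}(R^{sh})\to X(K^{sh})$ bijective, not a smooth one, and $X$ is an arbitrary $K$-variety. Your step (3), N\'eron smoothening, requires the generic fiber to be smooth and so does not apply here; it is also unnecessary, since once steps (1)--(2) produce a separated, flat, finite-type model through which every $K^{sh}$-point extends, injectivity of $\mathcal{X}(R^{sh})\to X(K^{sh})$ already follows from separatedness. (In the paper, smoothening is precisely the \emph{extra} input in the proof of Theorem \ref{thm-weakner}, which invokes the present proposition first.) Second, and more seriously, your step (2) --- which you rightly identify as the heart of the matter --- is not actually carried out. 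The claimed reduction ``after an affine-cover argument, to the case $X\subseteq\A^n_K$ closed'' is the gap: a bounded $K$-variety is in general not affine (every proper variety is bounded), and boundedness only provides, for each chart $U_i$ of a finite affine cover, a piece $E_i$ of $X(K^{sh})$ with bounded coordinates in $U_i$. Rescaling and taking the schematic closure of $U_i$ in $\A^{n_i}_R$ gives a model of $U_i$ absorbing $E_i$, but the proposition demands a \emph{single} model of all of $X$ absorbing all of $X(K^{sh})$, and these per-chart closures do not glue along the overlaps $U_i\cap U_j$ in any evident way. Constructing one model compatible with, or dominating, all of them (for instance by forming joins inside a fixed proper model of a compactification and using properness to transport the $R^{sh}$-points) is exactly the nontrivial content of \cite[\S 3.5]{neron}; as written, your plan assumes it.
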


\begin{definition}\label{def-weakner}
Let $X$ be a smooth $K$-variety. A {\em weak N\'eron model} for
$X$ is a smooth $R$-variety $\mathcal{X}$ endowed with an
isomorphism
$$\mathcal{X}_K\to X$$ such that the natural map
$$\mathcal{X}(R^{sh})\to X(K^{sh})$$ is a bijection.
\end{definition}
  Note that $X(K^{sh})$ is
empty if and only if the special fiber $\mathcal{X}_k$ is empty,
by \cite[2.3.5]{neron} (applied to $\mathcal{X}\times_R R^{sh}$).
Thus if $X(K^{sh})$ is empty, then up to isomorphism, $X$ is the
unique weak N\'eron model of $X$.

\begin{theorem}\label{thm-weakner}
If $X$ is a smooth $K$-variety, then $X$ admits a weak N\'eron
model if and only if  $X$ is bounded.
\end{theorem}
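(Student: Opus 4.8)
The statement is an "if and only if," so I would treat the two implications separately. The easy direction is that the existence of a weak Néron model implies boundedness: if $\mathcal{X}$ is a weak Néron model for $X$, then by definition $\mathcal{X}$ is in particular an $R$-variety together with a $K$-isomorphism $\mathcal{X}_K \to X$ for which $\mathcal{X}(R^{sh}) \to X(K^{sh})$ is a bijection, so Proposition \ref{prop-bounded} immediately gives that $X$ is bounded. Nothing more is needed here.

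For the converse, suppose $X$ is bounded. By Proposition \ref{prop-bounded} there is an $R$-variety $\mathcal{Y}$ with $\mathcal{Y}_K \cong X$ such that $\mathcal{Y}(R^{sh}) \to X(K^{sh})$ is a bijection; the problem is that $\mathcal{Y}$ need not be smooth. The plan is to pass to the smooth locus and, if necessary, perform a Néron smoothening. Concretely, let $\mathrm{Sm}(\mathcal{Y})$ be the largest open subscheme of $\mathcal{Y}$ that is smooth over $R$; since $X = \mathcal{Y}_K$ is smooth over $K$, the generic fiber of $\mathrm{Sm}(\mathcal{Y})$ is all of $X$. The key point is that every $R^{sh}$-point of $\mathcal{Y}$ actually lands in the smooth locus: because $R^{sh}$ is a strictly henselian (hence in particular henselian) discrete valuation ring with separably closed residue field, a section $\Spec R^{sh} \to \mathcal{Y}$ whose generic point maps into the smooth generic fiber specializes to a point of $\mathcal{Y}_k$ at which $\mathcal{Y}$ is smooth over $R$ — this is exactly the smoothening argument of Bosch–Lütkebohmert–Raynaud (\cite[Chapter 3]{neron}), and in the case where $\mathcal{Y}$ is already generically smooth one can invoke the greatest smooth subscheme together with the fact that $R^{sh}$-points are "nondegenerate." Granting this, the inclusion $\mathrm{Sm}(\mathcal{Y}) \hookrightarrow \mathcal{Y}$ induces a bijection on $R^{sh}$-points, so
\begin{equation*}
\mathrm{Sm}(\mathcal{Y})(R^{sh}) \xrightarrow{\ \sim\ } \mathcal{Y}(R^{sh}) \xrightarrow{\ \sim\ } X(K^{sh}),
\end{equation*}
and $\mathrm{Sm}(\mathcal{Y})$ is then a weak Néron model for $X$.

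The main obstacle is the claim that all $R^{sh}$-integral points of $\mathcal{Y}$ factor through the smooth locus. If $\mathcal{Y}$ happens not to be generically smooth one cannot take the smooth locus directly; instead one must first resolve: applying the weak Néron smoothening theorem of \cite{neron} produces a morphism of $R$-varieties $\mathcal{Y}' \to \mathcal{Y}$ which is an isomorphism on generic fibers and induces a bijection $\mathcal{Y}'(R^{sh}) \to \mathcal{Y}(R^{sh})$, with $\mathcal{Y}'$ smooth over $R$; then $\mathcal{Y}'$ (or its smooth locus, which now equals $\mathcal{Y}'$) is the desired weak Néron model. So the real content imported from \cite{neron} is the smoothening process, and the proof amounts to combining Proposition \ref{prop-bounded} with that theorem. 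I would therefore structure the write-up as: (i) the trivial direction via Proposition \ref{prop-bounded}; (ii) reduce the converse to producing a smooth model with the right $R^{sh}$-points; (iii) invoke Néron smoothening to conclude, taking care that the smoothening morphism preserves both the $K$-isomorphism class of the generic fiber and the bijection on unramified points.
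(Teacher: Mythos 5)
Your overall route is the same as the paper's (the easy direction via Proposition \ref{prop-bounded}, the converse via the model of Proposition \ref{prop-bounded} followed by N\'eron smoothening), but the step you present as the main argument contains a genuine error. For the model $\mathcal{Y}$ supplied by Proposition \ref{prop-bounded} it is \emph{not} true that every $R^{sh}$-point factors through the $R$-smooth locus $\Sm(\mathcal{Y})$, even though the generic fiber $\mathcal{Y}_K\cong X$ is smooth; strict henselianness of $R^{sh}$ does not give this. For instance, take $\mathcal{Y}=\Spec R[x,y]/(xy-\pi^2)$ with $\pi$ a uniformizer: the generic fiber is smooth, but the section $(x,y)=(\pi,\pi)$ reduces to the point $x=y=0$ of the special fiber, at which $\mathcal{Y}$ is not smooth over $R$. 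So $\Sm(\mathcal{Y})(R^{sh})\to\mathcal{Y}(R^{sh})$ can fail to be surjective and $\Sm(\mathcal{Y})$ need not be a weak N\'eron model. The case you reserve the smoothening theorem for, namely ``$\mathcal{Y}$ not generically smooth,'' never occurs here because $X$ is smooth by hypothesis; as structured, your primary argument is therefore the one carrying the proof, and it does not work. Arranging that all $R^{sh}$-points factor through the smooth locus is precisely the content of the smoothening process, not an automatic feature of generically smooth models.

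The repair is exactly what the paper does, and what your point (iii) gestures at but misstates: by \cite[3.4.2]{neron} there is a \emph{projective} morphism $\mathcal{Y}'\to\mathcal{Y}$, an isomorphism on generic fibers, such that every element of $\mathcal{Y}'(R^{sh})$ factors through $\Sm(\mathcal{Y}')$. Note that the smoothening does not make $\mathcal{Y}'$ itself smooth over $R$ (contrary to what you write); it only arranges the factorization property, so one must still pass to $\Sm(\mathcal{Y}')$. Finally, the bijectivity of $\mathcal{Y}'(R^{sh})\to\mathcal{Y}(R^{sh})$, which you fold into the statement of the smoothening theorem, needs an argument: it follows from the valuative criterion of properness applied to the projective morphism $\mathcal{Y}'\to\mathcal{Y}$. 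With these two corrections your outline becomes the paper's proof.
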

\begin{proof}
If $X$ admits a weak N\'eron model, then $X$ is bounded by
Proposition \ref{prop-bounded}. Assume, conversely, that $X$ is
bounded, and let $\mathcal{X}$ be an $R$-model of $X$ as in
Proposition \ref{prop-bounded}. By \cite[3.4.2]{neron}, there
exists a projective morphism $\mathcal{X}'\to \mathcal{X}$ such
that the induced morphism $\mathcal{X}'_K\to \mathcal{X}_K$ is an
isomorphism and every element of $\mathcal{X}'(R^{sh})$ factors
through the $R$-smooth locus $\Sm(\mathcal{X}')$ of
$\mathcal{X}'$. By the valuative criterion of properness, the map
$\mathcal{X}'(R^{sh})\to \mathcal{X}(R^{sh})$ is a bijection. It
follows that $\Sm(\mathcal{X}')$ is a weak N\'eron model of $X$.
\end{proof}

\begin{definition}\label{def-smoothbound}
Let $X$ be a $K$-variety. We say that $X$ is of type (N) if the
$K$-smooth locus $\Sm(X)$ of $X$ contains all $K^{sh}$-points of
$X$ and $\Sm(X)$ is bounded.
\end{definition}
 In particular, every smooth
and proper $K$-variety is of type (N). If $X$ is of type (N), then
$\Sm(X)$ admits a weak N\'eron model.

\begin{theorem}\label{thm-LoSe}
Let $X$ be a $K$-variety of type (N), and let $\mathcal{X}$ be a
weak N\'eron model for $\Sm(X)$. The class of $\mathcal{X}_k$ in
the ring
$$\KR{k}/(\LL_k-1)$$ only depends on $X$, and not on the
choice of a weak N\'eron model $\mathcal{X}$.
\end{theorem}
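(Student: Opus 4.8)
The plan is to show that any two weak Néron models of $\Sm(X)$ can be compared through a chain of morphisms that induce bijections on $R^{sh}$-points, and then to invoke a classical dévissage (due to Loeser--Sebag in the equicharacteristic case) showing that such comparisons do not change the class of the special fiber in $\KR{k}/(\LL_k-1)$. First I would reduce to the following statement: if $\mathcal{X}$ and $\mathcal{X}'$ are two weak Néron models of the same smooth bounded $K$-variety $Y=\Sm(X)$, then $[\mathcal{X}_k]=[\mathcal{X}'_k]$ in $\KR{k}/(\LL_k-1)$. By a standard argument one may assume there is an $R$-morphism $h\colon \mathcal{X}'\to\mathcal{X}$ restricting to the identity on $Y$: indeed, using Proposition \ref{prop-bounded} applied to the $R$-model obtained by gluing $\mathcal{X}$ and $\mathcal{X}'$ along $Y$ (or, more carefully, by taking the schematic closure of the diagonal of $Y$ inside $\mathcal{X}\times_R\mathcal{X}'$ and then applying \cite[3.4.2]{neron} and the smooth-locus construction as in the proof of Theorem \ref{thm-weakner}), one produces a third weak Néron model dominating both. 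So it suffices to treat the case of a morphism $h\colon \mathcal{X}'\to\mathcal{X}$ of weak Néron models of $Y$ inducing a bijection $\mathcal{X}'(R^{sh})\xrightarrow{\sim}\mathcal{X}(R^{sh})$.

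Next I would analyze such an $h$ via the smooth special fibers. Since both special fibers $\mathcal{X}_k$ and $\mathcal{X}'_k$ are smooth over $k$, and since every $R^{sh}$-point of $\mathcal{X}$ lifts uniquely to one of $\mathcal{X}'$, the key input is the structure theorem for morphisms of smooth $R$-schemes that are bijective on unramified points: the induced map on special fibers, restricted to the locus hit by $R^{sh}$-points, is surjective, and over each connected component $C$ of $\mathcal{X}_k$ that meets the image, the morphism $h^{-1}(C)\to C$ is (after stratifying $C$) a composition of affine-bundle-type maps, so that $[h^{-1}(C)] = \LL_k^{d}\cdot[C]$ modulo lower-dimensional corrections, all of which become trivial after dividing by $(\LL_k-1)$. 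Concretely, one uses that a point of $\mathcal{X}_k$ lying in the image of $\mathcal{X}_k(k^s)\to$ (via reduction of $\mathcal{X}(R^{sh})$) has, by Hensel, a whole $R^{sh}$-neighborhood of lifts; comparing with the fiber of $h$ over such a point and using smoothness forces the fiber to be an affine space $\A^{e}_{k(x)}$ for the appropriate relative dimension $e$. Components of $\mathcal{X}_k$ not meeting the image of the reduction map, together with the analogous components of $\mathcal{X}'_k$, contribute nothing to $\mathcal{X}(R^{sh})=\mathcal{X}'(R^{sh})=Y(K^{sh})$ but I must show they also contribute $0$ in $\KR{k}/(\LL_k-1)$; this is exactly where the equicharacteristic versus mixed-characteristic distinction enters — in mixed characteristic one passes to $\Kmod{k}$ precisely so that purely inseparable phenomena on these "invisible" components are killed, and one invokes \cite[2.3.5]{neron} to see such components are in fact empty after base change to $R^{sh}$, hence empty.

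The main obstacle I expect is the bookkeeping in the second step: making precise the claim that the fibers of a dominant morphism of smooth $R$-varieties inducing a bijection on $R^{sh}$-points are affine spaces over the residue fields of the target, uniformly enough to run a Noetherian stratification and sum up in the Grothendieck ring. This is essentially Loeser--Sebag's argument (\cite{motrigid}) recast with the present coefficient ring $\KR{k}$; the cleanest route is probably to cite their result directly for the equicharacteristic case, and for the mixed-characteristic case to observe that the only new subtlety — inseparable residue extensions along the fibers — is absorbed by the definition of $\Kmod{k}$, together with the fact that $\chi_c$ and $\sharp$ (which are the only invariants we ultimately care about, via \cite[4.2, 4.14]{NiSe-K0}) factor through $\KR{k}/(\LL_k-1)$. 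Thus the theorem follows once the fiberwise-affine-space statement is in place, and the proof reduces to assembling these ingredients rather than to any genuinely new geometric input.
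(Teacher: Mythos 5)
Your first step --- dominating two weak N\'eron models $\mathcal{X},\mathcal{X}'$ of $Y=\Sm(X)$ by a third one obtained from the schematic closure of the diagonal of $Y$ in $\mathcal{X}\times_R\mathcal{X}'$ via N\'eron smoothening --- is standard and correct. The gap is the ``structure theorem'' you then invoke: it is \emph{not} true that, for a morphism $h\colon\mathcal{X}'\to\mathcal{X}$ of weak N\'eron models restricting to the identity on $Y$, the fiber of $h$ over a point $x$ of $\mathcal{X}_k$ in the image of the reduction map is an affine space $\A^{e}_{k(x)}$, nor even that its class is a power of $\LL_k$. For a local counterexample take $\mathcal{X}=\Spec R[x,y]$, blow up the origin of the special fiber, then blow up the closure of a smooth conic lying in the exceptional $\Pro^2_k$ and meeting the strict transform of $\A^2_k$ transversally, and pass to $R$-smooth loci: this still yields a weak N\'eron model (of any bounded smooth generic fiber into which one embeds this local picture), but the fiber over the origin is the union of ($\A^2_k$ minus an affine conic) with a punctured $\A^1$-bundle over $\Pro^1_k$, of class $2\LL_k^2-2\LL_k+1$ up to boundary corrections. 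Its class \emph{is} congruent to $1$ modulo $\LL_k-1$, but establishing that congruence for the fibers of an arbitrary such $h$ is essentially equivalent to the theorem, so it cannot be taken as input; and even granting it, you would need piecewise triviality of $h$ over a stratification, not just constancy of fiber classes, to multiply out in $\KR{k}$.

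This is precisely why neither the paper nor Loeser--Sebag argue on the special fibers at all. The paper's proof completes $R$, passes to the rigid analytification and the formal $\frak{m}$-adic completion of $\mathcal{X}$, and quotes \cite[4.5.3]{motrigid}, extended to bounded rigid varieties in \cite[5.11]{NiSe-weilres}. The mechanism there is motivic integration on the Greenberg schemes (arc spaces) of the formal models: it is the induced maps on arcs, not $h$ itself, that are piecewise trivial fibrations with affine-space fibers of dimension controlled by the order of the Jacobian, and the two models are compared through the change of variables formula applied to a gauge form. The remark following Definition \ref{def-ratl-vol} states explicitly that no proof of even the weaker independence statement for $s(X)=\chi_c(\mathcal{X}_k)$ avoiding the change of variables formula is known, so the ``assembly of ingredients'' you describe cannot currently be carried out. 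A smaller point: the reason for replacing $\KVar{k}$ by $\Kmod{k}$ in mixed characteristic is not to absorb inseparability on components of $\mathcal{X}_k$ missed by the reduction map (such components are empty by \cite[2.3.5]{neron}, as you note), but to repair the change of variables formula itself, where the purely inseparable phenomena arise in the fibers of the Greenberg-scheme morphisms; see the remark following Definition \ref{def-LoSe}.
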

\begin{proof}
This result was originally proven by Loeser and Sebag for smooth
quasi-compact rigid $K$-varieties and their formal weak N\'eron
models, under the assumption that $K$ is complete; see
\cite[4.5.3]{motrigid}. It was extended to smooth and bounded
rigid $K$-varieties in \cite[5.11]{NiSe-weilres}. A rigid
$K$-variety $X$ is called bounded if it admits a quasi-compact
open subset $U$ such that for every finite unramified extension
$K'$ of $K$, $U$ contains all $K'$-points of $X$.

Theorem \ref{thm-LoSe} is easily deduced from this case, as
follows. Replacing $X$ by $\Sm(X)$, we may assume that $X$ is
smooth. Denote by $\widehat{R}$ and $\widehat{K}$ the completions
of $R$ and $K$, respectively. If $X$ is a smooth $K$-variety of
type (N), then the rigid analytification $(X\times_K
\widehat{K})^{\rig}$ is smooth and bounded, by
\cite[5.2.1]{conrad} and \cite[4.3]{nicaise}. Moreover, if
$\mathcal{X}$ is a weak N\'eron model for $X$, then its formal
$\frak{m}$-adic completion $\widehat{\mathcal{X}}$ over
$\widehat{R}$ is a formal weak N\'eron model for $(X\times_K
\widehat{K})^{\rig}$, by \cite[4.9]{nicaise}, and the special
fibers  $\mathcal{X}_k$ and $\widehat{\mathcal{X}}\times_{\Spf
\widehat{R}}\Spec k$ are canonically isomorphic.
\end{proof}

\begin{definition}\label{def-LoSe}
Let $X$ be a  $K$-variety of type (N), and let $\mathcal{X}$ be a
weak N\'eron model for $\Sm(X)$. The {\em motivic Serre invariant}
of $X$, denoted by $S(X)$, is the class of $\mathcal{X}_k$ in the
ring
$$\KR{k}/(\LL_k-1).$$
\end{definition}
This definition only depends on $X$, by Theorem \ref{thm-LoSe}.
Note that, if $X$ is a $K$-variety such that $X(K^{sh})$ is empty,
then $X$ is of type (N) and $S(X)=0$. More generally, we can view
$S(X)$ as a measure for the set of $K^{sh}$-rational points on
$X$: one can think of $X(K^{sh})$ as a family of open unit balls
parameterized by $\mathcal{X}_k$. If $K$ is complete, this
intuitive picture can be made more precise using the language of
rigid geometry \cite[2.5]{NiSe-survey}.

\begin{remark}
In \cite{motrigid} and \cite{nicaise}, the motivic Serre invariant
was defined with values in $$\KVar{k}/(\LL_k-1).$$ In the
meantime, Julien Sebag and the second author discovered a flaw in
the change of variables formula for motivic integrals in mixed
characteristic, upon which the proof of Theorem \ref{thm-LoSe} was
based. To correct it, one has to replace $\KVar{k}$ by $\KR{k}$.
This is explained in detail in \cite{NiSe-note} and
\cite{NiSe-survey}. The correction is harmless for all
applications, since all realization morphisms of $\KVar{k}$ that
are used in practice factor through $\Kmod{k}$
\cite[4.13]{NiSe-K0}. In fact, if $k$ has positive characteristic,
it is not even known if the projection
$$\KVar{k}\to \Kmod{k}$$ is an isomorphism.
\end{remark}

The notion of motivic Serre invariant was first introduced by
Loeser and Sebag in \cite[\S\,4.5]{motrigid}. It was inspired by
Serre's invariant that classifies compact manifolds over a
non-archimedean local field \cite{serre}. Assume that $K$ is
complete and $k$ finite, of cardinality $q$. Let $M$ be a compact
$K$-analytic manifold of pure dimension $d$ (in the na\"ive sense,
not a rigid variety; see \cite{cartan}  and
\cite[\S\,2.4]{igusa}). Serre showed that
 $M$ is isomorphic to a disjoint union
of closed unit balls $R^d$, and that the number $N$ of balls is
well-defined modulo $(q-1)$. We call the class of $N$ in
$\Z/(q-1)$ the Serre invariant of $M$, denoted by $S_{\sharp}(M)$.

Let $X$ be a  $K$-variety of type (N). The space $X(K)$, with its
$K$-adic topology, carries a canonical structure of compact
$K$-analytic manifold, which we denote by $X^{\an}$. The following
Proposition compares the Serre invariant of $X^{\an}$ to the
motivic Serre invariant of $X$.

\begin{prop}
\label{prop-pserre} Assume that $K$ is complete and $k$ finite, of
cardinality $q$. Let $X$ be a  $K$-variety of type (N), of pure
dimension $d$. Then the Serre invariant $S_{\sharp}(X^{\an})$ of
$X^{\an}$ is the image of $S(X)$ under the point counting
realization
$$\sharp:\KR{k}/(\LL_k-1)\to \Z/(q-1):[X]\mapsto |X(k)|.$$
\end{prop}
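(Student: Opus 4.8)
The plan is to realize both $S_{\sharp}(X^{\an})$ and $\sharp(S(X))$ as the class modulo $q-1$ of the number of closed unit balls in one and the same decomposition of $X^{\an}$, namely the one produced by the reduction map of a weak N\'eron model.

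First I would reduce to the smooth case. Since $X$ is of type (N), one has $X(K)=\Sm(X)(K)$ with the same $K$-adic topology, so $X^{\an}=\Sm(X)^{\an}$, and $S(X)=S(\Sm(X))$ by Definition \ref{def-LoSe}; hence I may replace $X$ by $\Sm(X)$ and assume $X$ smooth and bounded of pure dimension $d$. If $X(K^{sh})=\emptyset$ then $X^{\an}=\emptyset$ and every weak N\'eron model of $X$ has empty special fiber, so both sides are $0$; assume therefore $X(K^{sh})\neq\emptyset$. Let $\mathcal{X}$ be a weak N\'eron model of $X$ (Theorem \ref{thm-weakner}). As $\mathcal{X}$ is smooth (hence flat) over $R$ with generic fiber $X$ of pure dimension $d$, it has pure relative dimension $d$ over $R$ (each connected component is regular and flat over $R$, so dominates $\Spec R$ with generic fiber open in $X$); in particular $\mathcal{X}_k$ is a smooth $k$-variety of pure dimension $d$, and since $k$ is finite, $\mathcal{X}_k(k)$ is a finite set with $\sharp(S(X))=|\mathcal{X}_k(k)|\bmod(q-1)$.

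Next I would produce the decomposition. Since $K$ is complete, the natural map $\mathcal{X}(R)\to\mathcal{X}_K(K)=X(K)=X^{\an}$, which is injective because $\mathcal{X}$ is separated over $R$, is in fact bijective: indeed $\mathcal{X}(R^{sh})\to X(K^{sh})$ is bijective by the weak N\'eron model property, and taking $\mathrm{Gal}(K^{sh}/K)$-invariants, using $(R^{sh})^{\mathrm{Gal}(K^{sh}/K)}=R$, yields the bijection $\mathcal{X}(R)\to X(K)$. Now consider the reduction map $\rho:\mathcal{X}(R)\to\mathcal{X}_k(k)$. By smoothness of $\mathcal{X}$ over the complete local ring $R$ (Hensel's lemma) $\rho$ is surjective, and it is continuous for the discrete topology on $\mathcal{X}_k(k)$, so each fiber $\rho^{-1}(\bar x)$ is open and closed in $X^{\an}$. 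Finally, for $\bar x\in\mathcal{X}_k(k)$ choose an open neighbourhood $U\ni\bar x$ in $\mathcal{X}$ together with an \'etale $R$-morphism $U\to\A^d_R$ (possible since $\mathcal{X}/R$ is smooth of relative dimension $d$): it identifies $\rho^{-1}(\bar x)$ with the set of $R$-points of $\A^d_R$ reducing to the image of $\bar x$, i.e. with a translate of $\frak{m}^d$, and — as an \'etale morphism induces local $K$-analytic isomorphisms on $K$-points — it does so as $K$-analytic manifolds. Since $\frak{m}^d$ is $K$-analytically the closed unit ball $R^d$, we conclude that $X^{\an}$ is the disjoint union, as $K$-analytic manifolds, of $|\mathcal{X}_k(k)|$ copies of $R^d$.

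It then remains to invoke Serre's theorem: the number of balls occurring in such a decomposition of the compact $K$-analytic manifold $X^{\an}$ (which has pure dimension $d$) is well-defined modulo $q-1$ and, by definition, represents $S_{\sharp}(X^{\an})$. Here that number is $|\mathcal{X}_k(k)|$, whence $S_{\sharp}(X^{\an})=|\mathcal{X}_k(k)|\bmod(q-1)=\sharp(S(X))$. I expect the delicate point to be the last step of the third paragraph: passing from the scheme-theoretic to the analytic picture, i.e. checking that the \'etale chart identifies $\rho^{-1}(\bar x)$ with $R^d$ not merely as a set or a topological space but as a $K$-analytic manifold, compatibly with the intrinsic analytic structure on $X^{\an}$; the descent identification $\mathcal{X}(R)=X(K)$ is routine but worth spelling out.
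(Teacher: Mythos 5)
Your proposal is correct and is essentially the argument the paper gives: the authors formally reduce to Loeser--Sebag's rigid-analytic result \cite[4.6.3]{motrigid} via formal completion of the weak N\'eron model, but then sketch exactly your argument --- the specialization map $X(K)=\mathcal{X}(R)\to\mathcal{X}(k)$ whose fibers are closed unit balls $R^d$ by smoothness, giving a decomposition of $X^{\an}$ into $|\mathcal{X}_k(k)|$ balls, followed by Serre's theorem. Your added details (reduction to $\Sm(X)$, the descent identification $\mathcal{X}(R)=X(K)$, and the \'etale chart identifying a fiber with $R^d$ as a $K$-analytic manifold) are correct fillings-in of steps the paper leaves implicit.
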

\begin{proof}
This was shown in \cite[4.6.3]{motrigid} in the case where $X$ is
a smooth quasi-compact rigid $K$-variety. Proposition
\ref{prop-pserre} is easily deduced from that case, as follows. We
may assume that $X$ is smooth. Let $\mathcal{X}$ be a weak N\'eron
model for $X$, and let $\widehat{\mathcal{X}}$ be its formal
$\frak{m}$-adic completion. The generic fiber
$\widehat{\mathcal{X}}_\eta$ is a smooth quasi-compact open rigid
subvariety of the rigid analytification $X^{\rig}$ of $X$ that
contains all $K$-points of $X^{\rig}$, so the $K$-analytic
manifold associated to $X$ is canonically isomorphic to the
$K$-analytic manifold associated to $\widehat{\mathcal{X}}_\eta$.
Moreover, the motivic Serre invariants of
$\widehat{\mathcal{X}}_\eta$ and $X$ are the same: since
$\widehat{\mathcal{X}}$ is a formal weak N\'eron model of
$\widehat{\mathcal{X}}_\eta$, both motivic Serre invariants are
equal to the class of $\mathcal{X}_k$ in $\KR{k}/(\LL-1)$.

Let us briefly sketch the proof of \cite[4.6.3]{motrigid},
translated to the set-up of Proposition \ref{prop-pserre}. We may
assume that $X$ is smooth. Let $\mathcal{X}$ be a weak N\'eron
model for $X$. One has a specialization map
$$sp_{\mathcal{X}}:X(K)=\mathcal{X}(R)\to \mathcal{X}(k)$$
defined by reduction modulo the maximal ideal of $R$. For every
point $x$ of $\mathcal{X}(k)$, the fiber
$sp_{\mathcal{X}}^{-1}(x)$ is an open subset of $X(K)$, and it
inherits from $X^{\an}$ a structure of $K$-analytic manifold.
Smoothness of $\mathcal{X}$ over $R$ implies that
$sp_{\mathcal{X}}^{-1}(x)$ is isomorphic to the closed unit disc
$R^d$. As a $K$-analytic manifold, $X^{\an}$ is isomorphic to the
disjoint union
$$\bigsqcup_{x\in \mathcal{X}(k)}sp_{\mathcal{X}}^{-1}(x)$$ so that
the $p$-adic Serre invariant of $X^{\an}$ equals the class of
$|\mathcal{X}(k)|$ in $\Z/(q-1)$. This is precisely the image of
$$S(X)=[\mathcal{X}_k]$$ under the point counting
realization
$$\sharp:\KR{k}/(\LL_k-1)\to \Z/(q-1).$$
\end{proof}

\begin{lemma}\label{lemm-complete}
Let $X$ be a $K$-variety, and denote by $\widehat{K}$ the
completion of $K$. Then $X$ is of type (N) if and only if
$X\times_K \widehat{K}$ is of type (N). Moreover,
$$S(X)=S(X\times_K
\widehat{K}).$$
\end{lemma}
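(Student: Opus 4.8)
The plan is to reduce everything to the definitions of type (N) and of the motivic Serre invariant, exploiting that the relevant invariants are computed on weak N\'eron models and that formal completion is insensitive to passing from $K$ to $\widehat{K}$. The key geometric input is \cite[4.9]{nicaise}, already invoked in the proof of Theorem \ref{thm-LoSe}, which says that if $\mathcal{X}$ is a weak N\'eron model for a smooth $K$-variety, then its $\frak{m}$-adic completion $\widehat{\mathcal{X}}$ is a formal weak N\'eron model, together with \cite{neron} for the behaviour of boundedness under completion. I would organize the argument in three stages.

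First I would establish the equivalence $X$ of type (N) $\iff$ $X\times_K\widehat{K}$ of type (N). Since $\mathrm{Sm}(X)\times_K\widehat{K}=\mathrm{Sm}(X\times_K\widehat{K})$ and since the set of $K^{sh}$-points of $X$ not in $\mathrm{Sm}(X)$ is empty iff the corresponding set over $\widehat{K}$ is (the map $K^{sh}\to\widehat{K}^{sh}$ has dense image and smoothness is an open condition — here one uses that $R^{sh}$ and $\widehat{R}^{sh}=\widehat{R^{sh}}$ have the same residue field $k^s$, so a point over one specializes compatibly), the condition that $X^{sh}$-points land in the smooth locus transfers in both directions. For boundedness of $\mathrm{Sm}(X)$, I would cite \cite{neron}: a smooth $K$-variety is bounded iff its base change to $\widehat{K}$ is bounded, which follows from Theorem \ref{thm-weakner} plus \cite[4.9]{nicaise} once one knows a weak N\'eron model over $R$ gives one over $\widehat{R}$ and conversely — or more directly from the characterization of boundedness in terms of the functions $a\mapsto |x_i(a)|$ on $K^{sh}$-points. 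So after this stage we may assume $X$ smooth of type (N) over both $K$ and $\widehat{K}$.

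Second, assuming $X$ smooth and bounded, I would pick a weak N\'eron model $\mathcal{X}$ for $X$ over $R$, form its formal $\frak{m}$-adic completion $\widehat{\mathcal{X}}$ over $\widehat{R}$, and invoke \cite[4.9]{nicaise} exactly as in the proof of Theorem \ref{thm-LoSe}: $\widehat{\mathcal{X}}$ is a formal weak N\'eron model of $(X\times_K\widehat{K})^{\rig}$. I then need an algebraic weak N\'eron model of $X\times_K\widehat{K}$ over $\widehat{R}$ whose special fiber matches $\widehat{\mathcal{X}}\times_{\Spf\widehat{R}}\Spec k = \mathcal{X}_k$. The natural candidate is $\mathcal{X}\times_R\widehat{R}$: it is $\widehat{R}$-smooth, its generic fiber is $X\times_K\widehat{K}$, its special fiber is again $\mathcal{X}_k$, and by faithfully flat descent (or directly, since $\widehat{R^{sh}}$ is the $\frak{m}$-adic completion of $R^{sh}$ and both have residue field $k^s$) the map $(\mathcal{X}\times_R\widehat{R})(\widehat{R}^{sh})\to (X\times_K\widehat{K})(\widehat{K}^{sh})$ is bijective because $\mathcal{X}(R^{sh})\to X(K^{sh})$ is and smooth $R$-schemes satisfy Hensel's lemma so $R^{sh}$-points and $\widehat{R}^{sh}$-points agree. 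Hence $\mathcal{X}\times_R\widehat{R}$ is a weak N\'eron model for $X\times_K\widehat{K}$.

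Third, I would compute both motivic Serre invariants. By definition $S(X)=[\mathcal{X}_k]$ in $\KR{k}/(\LL_k-1)$, and $S(X\times_K\widehat{K})=[(\mathcal{X}\times_R\widehat{R})_k]=[\mathcal{X}_k]$ in the same ring — note the residue field of $R$ and of $\widehat{R}$ is the same $k$, so both invariants live in $\KR{k}/(\LL_k-1)$ and are represented by the identical $k$-variety $\mathcal{X}_k$. Therefore $S(X)=S(X\times_K\widehat{K})$. The main obstacle is the careful bookkeeping in the first stage: verifying that the type (N) conditions — both the "all $K^{sh}$-points are smooth" clause and the boundedness of $\mathrm{Sm}(X)$ — are genuinely equivalent over $K$ and over $\widehat{K}$, which requires knowing that $R^{sh}$-points and $\widehat{R}^{sh}$-points of smooth $R$-schemes coincide and that boundedness descends along $R\to\widehat{R}$; everything after that is essentially a rereading of the proof of Theorem \ref{thm-LoSe} with $\mathrm{Spf}$ replaced by $\mathrm{Spec}\,\widehat{R}$.
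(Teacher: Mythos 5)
Your overall route is the same as the paper's: reduce to $X$ smooth via $\Sm(X\times_K\widehat{K})\cong\Sm(X)\times_K\widehat{K}$, transfer boundedness across the completion, show that $\mathcal{X}\times_R\widehat{R}$ is a weak N\'eron model of $X\times_K\widehat{K}$, and conclude by identifying the special fibers. The paper does this in three lines, citing \cite[4.4]{nicaise} for the boundedness transfer and \cite[3.6.7]{neron} for the base change of weak N\'eron models; it does not take the detour through formal models.

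There is, however, one step whose justification fails as written. You argue that
$$(\mathcal{X}\times_R\widehat{R})(\widehat{R}^{sh})\to (X\times_K\widehat{K})(\widehat{K}^{sh})$$
is bijective ``because $\mathcal{X}(R^{sh})\to X(K^{sh})$ is and smooth $R$-schemes satisfy Hensel's lemma so $R^{sh}$-points and $\widehat{R}^{sh}$-points agree.'' That last claim is false: for $\mathcal{X}=\A^1_R$ one has $(\A^1_R\times_R\widehat{R})(\widehat{R}^{sh})=\widehat{R}^{sh}\supsetneq R^{sh}=\A^1_R(R^{sh})$. Smoothness over a henselian base gives surjectivity of reduction onto $\mathcal{X}(k^s)$, not an identification of the point sets over $R^{sh}$ and $\widehat{R}^{sh}$. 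The surjectivity you actually need --- that \emph{every} $\widehat{K}^{sh}$-point of $X\times_K\widehat{K}$, not just one induced from a $K^{sh}$-point of $X$, extends to $\mathcal{X}\times_R\widehat{R}$ --- is not a formal consequence of the weak N\'eron property over $R$; it is precisely the content of \cite[3.6.7]{neron}, which the paper invokes here (behind it sits an approximation argument using density of $R^{sh}$ in $\widehat{R}^{sh}$ and the fact that the smoothening defect is detected at finite level). Your alternative path through \cite[4.9]{nicaise} and formal weak N\'eron models can be made to work, but as stated it only produces a formal model and still owes the comparison between unramified points of the rigid generic fiber and $\widehat{K}^{sh}$-points of the algebraic base change. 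The remaining parts of your argument --- the reduction to the smooth case and the identification $S(X)=[\mathcal{X}_k]=S(X\times_K\widehat{K})$ in $\KR{k}/(\LL_k-1)$ --- match the paper.
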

\begin{proof}
By the canonical isomorphism
$$\Sm(X\times_K\widehat{K})\cong \Sm(X)\times_K \widehat{K}$$ we may
assume that $X$ is smooth. Then $X$ is bounded if and only if
$X\times_K \widehat{K}$ is bounded, by \cite[4.4]{nicaise}. Denote
by $\widehat{R}$ the completion of $R$. If $\mathcal{X}$ is a weak
N\'eron model of $X$, then $\mathcal{X}\times_R \widehat{R}$ is a
weak N\'eron model of $X\times_K \widehat{K}$, by
\cite[3.6.7]{neron}, and the special fibers of these weak N\'eron
models are canonically isomorphic.
\end{proof}

If $K$ has characteristic zero, the second author extended the
construction of the motivic Serre invariant to arbitrary
$K$-varieties, in the following way.

\begin{theorem}\label{thm-char0}
Assume that $K$ has characteristic zero. There exists a unique
ring homomorphism
$$S:\KVar{K}\to \KR{k}/(\LL_k-1)$$ that sends $[Y]$ to
$S(Y)$ for every smooth and proper $K$-variety $Y$. The morphism
$S$ sends $\LL_K$ to $1$.

Let $X$ be a $K$-variety and $U$ be a subvariety of $X$. If $U$
 contains all the points in $X(K^{sh})$, $U$ is smooth over $K$ and
 $U$  admits a weak N\'eron model $\mathcal{U}$, then $S([X])$ is
 the class of $\mathcal{U}_k$ in $\KR{k}/(\LL_k-1)$. In particular,
 if $X$ has type (N), then $S([X])=S(X)$.
\end{theorem}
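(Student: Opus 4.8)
The plan is to define $S$ on the generators of $\KVar{K}$ supplied by Bittner's presentation and then to deduce the two stated consequences; this is carried out in \cite{nicaise}, and I indicate the shape of the argument. Since $K$ has characteristic zero, Bittner's theorem \cite{bittner} (see also \cite{NiSe-K0}) realises $\KVar{K}$ as the ring generated by the classes $[Y]$ of smooth projective $K$-varieties $Y$, subject only to $[\emptyset]=0$ and the blow-up relations $[\mathrm{Bl}_Z Y]-[E]=[Y]-[Z]$ (here $Y$ is smooth projective, $Z\subseteq Y$ a smooth closed subvariety and $E\subseteq\mathrm{Bl}_Z Y$ the exceptional divisor), with multiplication induced by $[Y]\cdot[Y']=[Y\times_K Y']$. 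A smooth projective $K$-variety is smooth and proper, hence of type (N), so its motivic Serre invariant $S(Y)$ is defined (Definition \ref{def-LoSe}, Theorem \ref{thm-LoSe}); and since the classes $[Y]$ generate $\KVar{K}$, a ring homomorphism with $[Y]\mapsto S(Y)$ is unique if it exists. For existence, what remains is to check that $[Y]\mapsto S(Y)$ is multiplicative and respects the blow-up relations.

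Multiplicativity is easy. If $\mathcal{Y},\mathcal{Y}'$ are weak N\'eron models of smooth proper $Y,Y'$, then $\mathcal{Y}\times_R\mathcal{Y}'$ is $R$-smooth and $(\mathcal{Y}\times_R\mathcal{Y}')(R^{sh})=\mathcal{Y}(R^{sh})\times\mathcal{Y}'(R^{sh})=Y(K^{sh})\times Y'(K^{sh})=(Y\times_K Y')(K^{sh})$, so it is a weak N\'eron model of $Y\times_K Y'$ and $S(Y\times_K Y')=[\mathcal{Y}_k\times_k\mathcal{Y}'_k]=S(Y)\cdot S(Y')$. Likewise $\Spec R$ is a weak N\'eron model of $\Spec K$, whence $S(\Spec K)=1$, and $\Proj^1_R$ is a weak N\'eron model of $\Proj^1_K$ (properness and the valuative criterion), so $S([\Proj^1_K])=[\Proj^1_k]=1+\LL_k$; once $S$ is known to exist, the identity $[\Proj^1_K]=\LL_K+[\Spec K]$ then forces $S(\LL_K)=\LL_k$, which is $1$ in $\KR{k}/(\LL_k-1)$.

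The main obstacle is the blow-up relation $S(\mathrm{Bl}_Z Y)+S(Z)=S(Y)+S(E)$. My plan is to build compatible weak N\'eron models. I would fix a projective flat $R$-model $\overline{\mathcal{Y}}$ of $Y$ --- say the schematic closure in some $\Proj^N_R$ --- let $\overline{\mathcal{Z}}$ be the schematic closure of $Z$ in $\overline{\mathcal{Y}}$, and pass to $\mathrm{Bl}_{\overline{\mathcal{Z}}}\overline{\mathcal{Y}}$ together with its exceptional divisor; since blowing up commutes with the flat base change $\Spec K\to\Spec R$, these are $R$-models of $\mathrm{Bl}_Z Y$ and $E$. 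I would then run a N\'eron smoothening --- a finite sequence of blow-ups with centres in the special fibres --- simultaneously on this collection, so as to obtain an $R$-smooth $\mathcal{Y}$ containing an $R$-smooth closed subscheme $\mathcal{Z}$, with $\mathcal{Y},\mathcal{Z}$ weak N\'eron models of $Y,Z$, and with $\mathrm{Bl}_{\mathcal{Z}}\mathcal{Y}$ and its exceptional divisor $\mathcal{E}=\Proj(N_{\mathcal{Z}/\mathcal{Y}})$ weak N\'eron models of $\mathrm{Bl}_Z Y,E$ --- the mechanism being, as in the classical construction of N\'eron models, that all $R^{sh}$-points are carried into the $R$-smooth loci. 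Granting this, the scissor relation in the special fibre
\[ [(\mathrm{Bl}_{\mathcal{Z}}\mathcal{Y})_k]=[\mathcal{Y}_k]-[\mathcal{Z}_k]+[\mathcal{E}_k] \]
yields the identity in $\KR{k}/(\LL_k-1)$. The delicate point --- the one I expect to require real work --- is to carry out the smoothening compatibly, keeping the strict transforms closed and smooth and keeping $\mathrm{Bl}_{\mathcal{Z}}\mathcal{Y}\to\mathcal{Y}$ of the expected form throughout; alternatively, the relation can be obtained from the behaviour of the motivic Serre invariant under motivic integration \cite{motrigid}, in the corrected form of \cite{NiSe-weilres} and \cite{nicaise}. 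This produces the homomorphism $S$.

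For the displayed formula and the last assertion, the crucial step is that $S([V])=S(V)$ whenever $V$ is smooth and bounded. Granting this for the moment: first, for any $K$-variety $W$ with $W(K^{sh})=\emptyset$ one has $S([W])=0$, by a Noetherian induction on $\dim W$ --- using that in characteristic zero $W$ contains a dense smooth open subvariety --- which reduces to $W$ smooth, and then $W$ is bounded with a weak N\'eron model having empty special fibre by \cite[2.3.5]{neron}, so $S([W])=S(W)=0$. Consequently, for $V$ of type (N) the decomposition $[V]=[\Sm(V)]+[V\setminus\Sm(V)]$ with $(V\setminus\Sm(V))(K^{sh})=\emptyset$ gives $S([V])=S([\Sm(V)])=S(\Sm(V))=S(V)$, which is the last assertion; and for $X$ and $U$ as in the theorem, $[X]=[U]+[X\setminus U]$ with $(X\setminus U)(K^{sh})=\emptyset$ (as $X(K^{sh})\subseteq U(K^{sh})$), while $U$ is smooth and bounded (Theorem \ref{thm-weakner}), so $S([X])=S([U])=S(U)=[\mathcal{U}_k]$. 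It remains to prove the crucial step. By resolution of singularities over $K$, choose a smooth projective compactification $\overline{V}\supseteq V$ with strict normal crossings boundary $D=\bigcup_{i\in I}D_i$; inclusion--exclusion gives
\[ [V]=\sum_{J\subseteq I}(-1)^{|J|}[D_J],\qquad D_\emptyset=\overline{V},\ \ D_J=\textstyle\bigcap_{i\in J}D_i, \]
with each $D_J$ smooth projective, so $S([V])=\sum_{J\subseteq I}(-1)^{|J|}S(D_J)$; and this sum equals $[\mathcal{V}_k]=S(V)$ for a weak N\'eron model $\mathcal{V}$ of $V$ by the strict normal crossings inclusion--exclusion formula for the motivic Serre invariant. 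Establishing that formula --- like the blow-up relation --- is where the genuine content lies, and it is proven via the weak N\'eron model and motivic integration methods of \cite{motrigid}, \cite{NiSe-weilres} and \cite{nicaise}.
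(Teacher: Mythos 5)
Your proposal follows essentially the same route as the paper: the paper's own proof simply cites \cite[5.4]{nicaise} for the core construction (Bittner's presentation plus a N\'eron smoothening of pairs to control the blow-up relation) and then derives the final assertions by partitioning $X\setminus U$ into pieces without $K^{sh}$-points and using additivity together with $S([W])=0$ when $W(K^{sh})=\emptyset$ --- which is exactly your last paragraph. What you add is an unfolding of the internal structure of the cited result (multiplicativity of weak N\'eron models, $S(\LL_K)=1$ via $\Pro^1$, the shape of the compatible-smoothening argument for the blow-up relation), all of which is accurate, while still deferring the two genuinely hard inputs (the blow-up relation and the strict-normal-crossings inclusion--exclusion formula, the latter being essentially equivalent to the type-(N) compatibility you want) to the same references. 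Two small points where the paper is more careful: first, the cited machinery is developed for \emph{complete} $K$, and the paper passes to the general case by composing with base change to $\widehat{K}$ and invoking Lemma \ref{lemm-complete} (invariance of type (N) and of $S$ under completion); your argument works directly over $R$ and silently assumes the references apply there, so you should insert this reduction. Second, the paper obtains $S([Y])=S(Y)$ for $Y$ of type (N) for free from Lemma \ref{lemm-complete} and the complete case, rather than re-deriving it via resolution and the snc formula as you do; your route is viable but places the burden on a formula you do not prove.
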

\begin{proof}
If $K$ is complete, this is \cite[5.4]{nicaise}. The proof is
based on Weak Factorization (it uses Bittner's presentation of the
Grothendieck ring \cite{bittner}) and on a generalization of
N\'eron smoothening to pairs of $K$-varieties to control the
behaviour of the motivic Serre invariant under blow-up.

The general case can easily be deduced as follows. We denote by
$\widehat{K}$ the completion of $K$ and by
$$S_{\widehat{K}}:\KVar{\widehat{K}}\to \KR{k}/(\LL_k-1)$$
 the ring homomorphism
from \cite[5.4]{nicaise}.  Uniqueness of $S$ follows from the fact
that $\KVar{K}$ is generated by the classes of smooth and
projective $K$-varieties, by the scissor relations and Hironaka's
resolution of singularities. To prove existence, we define $S$ to
be the composition of the base change morphism
$$\KVar{K}\to \KVar{\widehat{K}}:[X]\mapsto [X\times_K
\widehat{K}]$$ with
$$S_{\widehat{K}}:\KVar{\widehat{K}}\to \KR{k}/(\LL_k-1).$$
It follows from Lemma \ref{lemm-complete} that $S([Y])=S(Y)$ for
every $K$-variety $Y$ of type (N). In particular, if $Y(K^{sh})$
is empty, then $S([Y])=0$. If $X$ and $U$ are as in the statement,
then we can partition $X\setminus U$ into subvarieties of $X$
without $K^{sh}$-points. Additivity of $S$ implies that
$S([X])=S([U])$, and we know that $S([U])=S(U)$ because $U$ is of
type (N). By definition, $S(U)$ is the class of $\mathcal{U}_k$ in
$\KR{k}/(\LL_k-1)$.
\end{proof}

\begin{definition}[\cite{nicaise}, Definition 5.5]\label{def-char0}
Assume that $K$ has characteristic zero. For any $K$-variety $X$,
we define the {\em motivic Serre invariant} of $X$ by
$$S(X)=S([X])\quad \in \KR{k}/(\LL_k-1).$$
\end{definition}
If $X$ is of type (N), this definition is compatible with
Definition \ref{def-LoSe}, by Theorem \ref{thm-char0}.

\begin{definition} \label{def-ratl-vol}
Let $X$ be a $K$-variety. Assume either that $K$ has
characteristic zero, or that $X$ is of type (N). We define the
{\em rational volume} of $X$, denoted by $s(X)$, as
$$s(X)=\chi_c(S(X))\quad \in \Z.$$
\end{definition}
\begin{remark}
If $X$ has type (N) and $\mathcal{X}$ is a weak N\'eron model of
$\Sm(X)$, then by definition, $s(X)=\chi_c(\mathcal{X}_k)$. To our
knowledge, there is no general proof of the independence of $s(X)$
of the choice of $\mathcal{X}$ that does not use the change of
variables formula for motivic integrals. If the residue field $k$
has characteristic zero, this independence property can also be
deduced from the trace formula in Theorem \ref{thm-trace}.
\end{remark}

The rational volume $s(X)$ vanishes if $X(K^{sh})$ is empty. So
one way to detect the existence of a $K^{sh}$-rational point on
$X$ is to show the non-vanishing of the rational volume of $X$. If
$R$ is henselian and the residue field $k$ is finite, the
following proposition provides a similar technique to detect the
existence of a $K$-rational point.

\begin{prop}\label{prop-serrefinite}
Suppose that $R$ is henselian and that $k$ is finite, of
cardinality $q$. Let $X$ be a $K$-variety.  Assume either that $K$
has characteristic zero, or that $X$ is of type (N). If
$$\sharp S(X) \neq 0$$ in $\Z/(q-1)$, then $X(K)$ is non-empty.
\end{prop}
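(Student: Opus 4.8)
The plan is to treat the two hypotheses of the proposition separately.

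\textbf{The type (N) case.} Suppose $X$ is of type (N), and let $\mathcal{X}$ be a weak N\'eron model of $\Sm(X)$ over $R$ (which exists by Theorem~\ref{thm-weakner}, since $\Sm(X)$ is bounded). Then $S(X)=[\mathcal{X}_k]$, so $\sharp S(X)$ is the residue class of $|\mathcal{X}_k(k)|$ modulo $q-1$. If $\sharp S(X)\neq 0$ then $\mathcal{X}_k(k)\neq\emptyset$; since $\mathcal{X}\to\Spec R$ is smooth and $R$ is henselian, Hensel's lemma lifts any $k$-point of $\mathcal{X}_k$ to a point of $\mathcal{X}(R)$, whose generic fibre is a $K$-point of $\Sm(X)\subseteq X$. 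Hence $X(K)\neq\emptyset$.

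\textbf{The characteristic zero case: reduction.} Now assume $K$ has characteristic zero (so $R$ has mixed characteristic and $k$ is finite of cardinality $q$) and that $X(K)=\emptyset$; I will show $\sharp S(X)=0$. Since $S$ is additive (Theorem~\ref{thm-char0}) and every subvariety of $X$ again has no $K$-rational point, and since $K$ is perfect, I may stratify $X$ into finitely many smooth locally closed subvarieties and thereby reduce to the case that $X$ is smooth. Choose, by Hironaka, a smooth projective variety $Y$ containing $X$ as a dense open subvariety, with boundary $D=Y\setminus X$ a strict normal crossings divisor with smooth components $D_1,\dots,D_r$. Writing $D_I=\bigcap_{i\in I}D_i$ for $I\subseteq\{1,\dots,r\}$, with $D_\emptyset=Y$, inclusion--exclusion gives $[X]=\sum_{I}(-1)^{|I|}[D_I]$ in $\KVar{K}$, and each $D_I$ is smooth and projective, hence of type (N).

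\textbf{The characteristic zero case: computation.} Using N\'eron smoothening of the pair $(Y,D)$ --- the relative normal crossings refinement of N\'eron smoothening that already underlies the proof of Theorem~\ref{thm-char0} --- I would choose a weak N\'eron model $\mathcal{Y}$ of $Y$ over $R$ such that the Zariski closures $\mathcal{D}_i\subseteq\mathcal{Y}$ of the $D_i$ form a relative normal crossings divisor; then each $\mathcal{D}_I:=\bigcap_{i\in I}\mathcal{D}_i$ is smooth over $R$. Since $\mathcal{D}_I$ is closed in $\mathcal{Y}$ with generic fibre $D_I$ and $\mathcal{Y}(R^{sh})=Y(K^{sh})$, a point of $\mathcal{Y}(R^{sh})$ factors through $\mathcal{D}_I$ exactly when its generic fibre lies in $D_I$, so $\mathcal{D}_I(R^{sh})=D_I(K^{sh})$ and $\mathcal{D}_I$ is a weak N\'eron model of $D_I$; being of type (N), $D_I$ satisfies $\sharp S([D_I])=\sharp S(D_I)=|\mathcal{D}_{I,k}(k)|$. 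Applying inclusion--exclusion on the finite set $\mathcal{Y}_k(k)$ I then obtain
\[
\sharp S(X)=\sum_{I}(-1)^{|I|}\,|\mathcal{D}_{I,k}(k)|=|\mathcal{X}_k(k)|,
\]
where $\mathcal{X}:=\mathcal{Y}\setminus\bigcup_i\mathcal{D}_i$ is open in $\mathcal{Y}$, hence smooth over $R$, with generic fibre $X$. Finally $\mathcal{X}_k(k)=\emptyset$: a $k$-point would lift, by Hensel's lemma, to a point of $\mathcal{X}(R)$ whose generic fibre is a $K$-point of $X$, contradicting $X(K)=\emptyset$. Therefore $\sharp S(X)=0$, which proves the contrapositive.

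\textbf{Main obstacle.} The delicate point is the construction of the weak N\'eron model $\mathcal{Y}$ of $Y$ adapted simultaneously to all the boundary strata $D_I$, i.e.\ a single smoothening of $Y$ together with its normal crossings boundary such that the closures $\mathcal{D}_i$ are $R$-smooth and cross normally. This is exactly what legitimises the inclusion--exclusion identity on special fibres, and it is where one must invoke the pair (relative normal crossings) version of N\'eron smoothening rather than its naive form; everything else is bookkeeping plus Hensel's lemma.
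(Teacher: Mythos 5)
Your first paragraph (the type (N) case) is exactly the paper's argument: identify $\sharp S(X)$ with $|\mathcal{X}_k(k)| \bmod (q-1)$ via a weak N\'eron model and lift a $k$-point by Hensel's lemma. The divergence, and the problem, is in the characteristic zero case. The paper disposes of it by induction on $\dim X$: after reducing to $X$ smooth, it writes $\sharp S(X)=\sharp S(\overline{X})-\sharp S(Z)$ for a smooth compactification $\overline{X}$ with boundary $Z$, obtains a $K$-point on $Z$ (induction on dimension) or on $\overline{X}$ (the type (N) case), and then invokes Pop's theorem that a henselian valued field is \emph{large}, so that a single $K$-point on the smooth proper $\overline{X}$ forces $\overline{X}(K)$ to be Zariski dense and hence to meet the dense open subset $X$. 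Your route deliberately avoids the large-field input, which is a genuinely different idea; but it does so at the price of the construction you yourself flag as the ``main obstacle'', and that construction is a real gap, not bookkeeping.

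Concretely, you need a single weak N\'eron model $\mathcal{Y}$ of $Y$ in which the closures $\mathcal{D}_i$ of \emph{all} boundary components are $R$-smooth and cross normally, so that every stratum $\mathcal{D}_I=\bigcap_{i\in I}\mathcal{D}_i$ is $R$-smooth. (Your observation that $\mathcal{D}_I(R^{sh})=D_I(K^{sh})$ is correct, so $R$-smoothness of the $\mathcal{D}_I$ is the only missing property --- but it is the essential one.) The N\'eron smoothening for pairs underlying Theorem \ref{thm-char0} in \cite{nicaise} treats one smooth closed subvariety at a time; it does not produce a model adapted simultaneously to a whole normal crossings configuration together with all of its intersections. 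Nor can you simply iterate it component by component: the smoothening steps are blow-ups in centers contained in the special fibre, and the strict transform of an already-arranged $R$-smooth $\mathcal{D}_j$ under such a blow-up need not remain $R$-smooth. A simultaneous, configuration-adapted smoothening is plausible but would itself require a proof of substance. Until you supply it, the identity $\sum_I(-1)^{|I|}|\mathcal{D}_{I,k}(k)|=|\mathcal{X}_k(k)|$ computing $\sharp S(X)$ is unjustified and the characteristic zero case remains open; I would either prove that smoothening statement or switch to the short argument via large fields.
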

\begin{proof}
If $X$ is of type (N), then $\Sm(X)$ admits a weak N\'eron model
$\mathcal{X}$ over $R$, and $$\sharp S(X)=|\mathcal{X}_k(k)| \mod
(q-1).$$ Since $R$ is henselian and $\mathcal{X}$ is smooth over
$R$, any element of $\mathcal{X}_k(k)$ lifts to an element of
$\mathcal{X}(R)=X(K)$, so that $X(K)$ is non-empty if $\sharp
S(X)\neq 0$.

It remains to prove the case where $K$ has characteristic zero.
Here we can proceed by induction on the dimension of $X$. If $X$
has dimension zero, then $X$ is of type (N) and we are done.
Assume that $\mathrm{dim}(X)>0$, and that the result holds for all
$K$-varieties of strictly smaller dimension. Since $K$ has
characteristic zero and $X$ is reduced, the $K$-smooth locus
$\Sm(X)$ is open and dense in $X$, and $Y=X\setminus \Sm(X)$ has
strictly smaller dimension than $X$. Since $$\sharp
S(\Sm(X))+\sharp S(Y)=\sharp S(X) \neq 0,$$ we know that $\sharp
S(Y) \neq 0$ or $\sharp S(\Sm(X)) \neq 0$. If $\sharp S(Y) \neq 0$
then $Y$ has a $K$-rational point by the induction hypothesis, so
that $X$ also has a $K$-rational point. Thus, we may assume that
$\sharp S(\Sm(X)) \neq 0$, and it suffices to consider the case
where $X$ is smooth over $K$.

Let $\overline{X}$ be a smooth compactification of $X$, and put
$Z=\overline{X}\setminus X$. Since $$\sharp S(\overline{X})-\sharp
S(Z)=\sharp S(X) \neq 0,$$ we know that $\sharp S(Z) \neq 0$ or
$\sharp S(\overline{X}) \neq 0$. If $\sharp S(Z) \neq 0$ then $Z$,
and thus $\overline{X}$, has a $K$-rational point by the induction
hypothesis, since $Z$ has strictly smaller dimension than $X$. If
$\sharp S(\overline{X})\neq 0$, then $\overline{X}$
 also has a $K$-rational point because $\overline{X}$ is smooth and
proper over $K$, so that $\overline{X}$ is of type (N). Thus
$\overline{X}(K)$ is Zariski-dense in $\overline{X}$, because $K$
is a henselian valued field and therefore a large field by
\cite[3.1]{pop} (applied to the set of localities
$\mathcal{L}=\{K\}$). It follows that $X(K)$ is non-empty.
  \end{proof}
\begin{remark}
Large fields are also called {\em ample} or {\em fertile} in the
literature. Every henselian valued field is large. In
\cite[3.1]{pop}, this fact is deduced from the implicit function
theorem for henselian valued fields.
\end{remark}

\section{$G$-models}\label{sec-G}
Let $G$ be a finite group.

\begin{definition}
Let $X$ be a $K$-variety endowed with a good action of $G$. A
$G$-model for $X$ is a flat $R$-variety $\mathcal{X}$ that carries
a good action of $G$, endowed with a $G$-equivariant isomorphism
$$\mathcal{X}_K\to X.$$
\end{definition}

The following lemma was kindly suggested to us by the referee.
\begin{lemma}\label{lemm-finitenorm}
Let $\mathcal{Z}$ be an $R$-variety. Assume either that $R$ is
excellent, or that the generic fiber $\mathcal{Z}_K$ is
geometrically reduced. Then the normalization morphism
$\widetilde{\mathcal{Z}}\to \mathcal{Z}$ is finite.
\end{lemma}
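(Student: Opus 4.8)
The plan is to reduce to the affine integral case, deal directly with the cases in which $\mathcal{Z}$ lies over a single point of $\Spec R$, and treat the remaining (main) case by descending finiteness of the normalization along the completion $R\to\widehat{R}$, using that $\widehat{R}$ is excellent.

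Since the assertion is local on $\mathcal{Z}$ and the normalization commutes with localization, one first reduces to $\mathcal{Z}=\Spec A$ with $A$ reduced and of finite type over $R$. The normalization of $\Spec A$ is the disjoint union of the normalizations of the reduced closed subschemes $\Spec(A/\mathfrak{p})$, for $\mathfrak{p}$ ranging over the finitely many minimal primes of $A$, each of which is again a domain of finite type over $R$; and if $\mathcal{Z}_K$ is geometrically reduced then so is the generic fibre of each of these components, since a finite type scheme over a field is geometrically reduced precisely when it is reduced with separable residue field extensions at its generic points. Hence one may assume that $A$ is a domain; fix a uniformizer $\pi$ of $R$. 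If $\pi=0$ in $A$ then $A$ is of finite type over the field $k$, hence Nagata, so $\widetilde{A}$ is finite over $A$; if $\pi$ is a unit in $A$ then $A$ is of finite type over $K$ and one concludes in the same way. So one is left with the case that $\pi$ is a nonzero non-unit of $A$, hence a nonzerodivisor; then $A$ is torsion-free, so flat, over $R$, and $B:=A\otimes_R K=A[\pi^{-1}]$ is a domain of finite type over $K$. If $R$ is excellent then $A$, being of finite type over an excellent ring, is excellent, hence Nagata, and $\widetilde{A}$ is finite over $A$; so it remains to treat the case in which $\mathcal{Z}_K$, and therefore $B$, is geometrically reduced over $K$.

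For this case, let $\widehat{R}$ be the completion of $R$ --- a complete discrete valuation ring, hence excellent and in particular Nagata, with fraction field $\widehat{K}=\widehat{R}[\pi^{-1}]$ --- and put $\widehat{A}:=A\otimes_R\widehat{R}$. Then $\widehat{A}$ is a reduced finite type $\widehat{R}$-algebra: it embeds into $\widehat{A}[\pi^{-1}]=B\otimes_K\widehat{K}$, which is reduced because $B$ is geometrically reduced over $K$. Since $\widehat{R}$ is Nagata, the normalization $\widetilde{\widehat{A}}$ of $\widehat{A}$ is a finite $\widehat{A}$-module. On the other hand, with $L=\mathrm{Frac}(A)$, the ring $\widetilde{A}\otimes_R\widehat{R}$ is integral over $\widehat{A}$ (integral ring maps are stable under base change) and embeds, by flat base change of $\widetilde{A}\hookrightarrow L$ along $R\to\widehat{R}$, into $L\otimes_R\widehat{R}=L\otimes_K\widehat{K}$; and $L\otimes_K\widehat{K}$ is the localization of $\widehat{A}$ at the multiplicative set generated by $\pi$ and the nonzero elements of $B$, all of which are nonzerodivisors of $\widehat{A}$ --- a nonzero element of the domain $B$ lies in no minimal prime of the reduced ring $B\otimes_K\widehat{K}$, since by flatness such a prime contracts to $(0)$ in $B$. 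Hence $\widehat{A}\subseteq\widetilde{A}\otimes_R\widehat{R}\subseteq\widetilde{\widehat{A}}$, so that $\widetilde{A}\otimes_R\widehat{R}$ is an $\widehat{A}$-submodule of the finite module $\widetilde{\widehat{A}}$ over the Noetherian ring $\widehat{A}$, hence itself finite over $\widehat{A}$. As $A\to\widehat{A}$ is faithfully flat and $\widetilde{A}\otimes_A\widehat{A}=\widetilde{A}\otimes_R\widehat{R}$, module-finiteness descends: $\widetilde{A}$ is finite over $A$.

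The main obstacle is the step showing $\widetilde{A}\otimes_R\widehat{R}\subseteq\widetilde{\widehat{A}}$, i.e.\ that passing to the completion does not enlarge the normalization --- this is exactly where geometric reducedness of $\mathcal{Z}_K$ enters, ensuring that $B\otimes_K\widehat{K}$, hence $\widehat{A}$, is reduced and that the nonzero elements of $B$ remain nonzerodivisors after completion. Without such a hypothesis the statement is genuinely false: Nagata's examples of discrete valuation rings with non-finite integral closure in a finite purely inseparable extension of $K$ yield flat finite type $R$-algebras whose normalization is not finite, which is why the excellence of $R$ must be imposed in the complementary case.
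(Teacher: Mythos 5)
Your proof is correct and follows essentially the same route as the paper's: reduce to $A$ integral and $R$-flat, base-change to the excellent ring $\widehat{R}$, use geometric reducedness of the generic fibre to see that $A\otimes_R\widehat{R}$ is reduced with $\mathrm{Frac}(A)\otimes_K\widehat{K}$ contained in its total ring of fractions, and conclude from finiteness of the normalization of $A\otimes_R\widehat{R}$. The only cosmetic difference is the final step: you descend module-finiteness of $\widetilde{A}\otimes_R\widehat{R}$ directly by faithful flatness, whereas the paper derives the same contradiction by observing that a strictly ascending chain of finite sub-$A$-algebras of $\mathrm{Frac}(A)$ would remain strictly ascending after the faithfully flat base change and would violate finiteness of the normalization of $A\otimes_R\widehat{R}$.
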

\begin{proof}
If $R$ is excellent, this follows from \cite[33.H]{matsumura}, so
it is enough to consider the case where $\mathcal{Z}_K$ is
geometrically reduced.  We denote by $\widehat{R}$ and
$\widehat{K}$ the completions of $R$ and $K$, respectively. We may
assume that $\mathcal{Z}$ is affine, integral and $R$-flat, and we
set $A=\mathcal{O}(\mathcal{Z})$. We denote by $\widetilde{A}$ the
normalization of $A$, and we set $B=A\otimes_R \widehat{R}$. Then
$B$ is $\widehat{R}$-flat and thus a subring of $A\otimes_R
\widehat{K}$. Since $\mathcal{Z}_K$ is geometrically reduced, $B$
is reduced.

If $C$ is an $A$-algebra such that every element of $C$ is
integral over $A$, then $C$ is finite over $A$ if and only if it
is finitely generated. Thus if $\widetilde{A}$ were not finite
over $A$, then by joining elements from $\widetilde{A}$ to $A$, we
could construct a strictly ascending chain
$$A\varsubsetneq A_1\varsubsetneq A_2 \varsubsetneq \ldots$$
of finite sub-$A$-algebras
 of the fraction field
$\mathrm{Frac}(A)$ of $A$. Tensoring with the faithfully flat
$R$-algebra $\widehat{R}$, we would obtain a
 strictly ascending chain
\begin{equation}\label{eq-chain}B=A\otimes_R \widehat{R}\varsubsetneq A_1\otimes_R
\widehat{R}\varsubsetneq A_2\otimes_R \widehat{R} \varsubsetneq
\ldots\end{equation} of finite sub-$B$-algebras of
$\mathrm{Frac}(A)\otimes_K \widehat{K}$.
 The ring
$\mathrm{Frac}(A)\otimes_K \widehat{K}$ is a subring of the total
ring of fractions of $B$. It follows that the elements of the
chain \eqref{eq-chain} are all contained in the normalization of
$B$, which is finite over $B$ by excellence of $\widehat{R}$. This
contradicts the fact that \eqref{eq-chain} is strictly ascending.
 Thus $\widetilde{A}$ is finite over $A$.
\end{proof}
\begin{prop}\label{prop-Gmod}
Let $X$ be a normal proper $K$-variety, endowed with a good action
of $G$. Assume either that $R$ is excellent, or that $X$ is
geometrically reduced. Then $X$ admits a proper $G$-model
$\mathcal{X}$ over $R$.
\end{prop}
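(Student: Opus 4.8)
The plan is to produce the $G$-model in two steps: first build an $R$-model (not necessarily carrying a $G$-action) that is proper over $R$, then rigidify it into a $G$-equivariant one by a normalization trick. For the first step, spread out the $K$-variety $X$ to an $R$-scheme. Concretely, since $X$ is proper over $K$, Nagata compactification together with flattening (or simply choosing a projective embedding if $X$ is projective, and otherwise using Chow's lemma and descent) produces a flat proper $R$-model $\mathcal{X}_0$ of $X$, i.e. a flat proper $R$-variety with an isomorphism $(\mathcal{X}_0)_K \cong X$. At this stage $\mathcal{X}_0$ carries no $G$-action; the generic fibre does, but there is no reason for the $G$-action to extend. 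I would replace $\mathcal{X}_0$ by its reduction if necessary so that it is a genuine $R$-variety.

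The key idea is to use the graph of the $G$-action to force equivariance. Let $\mathcal{Y}$ be the scheme-theoretic image of the morphism
$$
X \longrightarrow \prod_{g\in G} \mathcal{X}_0,\qquad x\longmapsto (g\cdot x)_{g\in G},
$$
where the product is the $|G|$-fold fibre product over $R$. The finite group $G$ acts on $\prod_{g\in G}\mathcal{X}_0$ by permuting the factors (via left translation on the index set $G$), this action is good because $\prod_{g\in G}\mathcal{X}_0$ is a fibre product of proper, hence (fppf-locally) quasi-projective, $R$-schemes over the affine base $R$, so Serre's criterion applies and each orbit lies in an affine open; and the diagonal-type morphism above is $G$-equivariant for the given $G$-action on $X$. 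Hence $\mathcal{Y}$, being the closure of the image of a $G$-stable subset, is a $G$-stable closed subscheme of $\prod_{g\in G}\mathcal{X}_0$, so it inherits a good $G$-action. Moreover $\mathcal{Y}$ is proper over $R$ (closed in a proper $R$-scheme), it is flat over $R$ after replacing it by the closure of its generic fibre (equivalently, by killing $R$-torsion), and its generic fibre $\mathcal{Y}_K$ is the image of $X$ under the $G$-equivariant closed immersion into $\prod_{g}X$ given by $x\mapsto(g\cdot x)_g$ — but that composite is precisely the graph embedding, which is a closed immersion with image isomorphic to $X$, $G$-equivariantly (the inverse is projection to the factor indexed by the identity). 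Thus $\mathcal{Y}$ is a proper $R$-model of $X$ with a good $G$-action, except that $\mathcal{Y}$ need not be normal — which is not required in the statement — but we do need it to be a variety, i.e. reduced; reducedness of $\mathcal{Y}_K\cong X$ plus flatness over the reduced ring $R$ forces $\mathcal{Y}$ reduced, so $\mathcal{Y}$ is an $R$-variety.

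Finally, to upgrade to the normal case and to invoke the hypotheses, I would pass to the normalization: let $\widetilde{\mathcal{Y}}\to\mathcal{Y}$ be the normalization. By Lemma \ref{lemm-finitenorm}, under either hypothesis ($R$ excellent, or $X=\mathcal{Y}_K$ geometrically reduced) this morphism is finite, hence $\widetilde{\mathcal{Y}}$ is again proper over $R$; it is an integral (after working component by component) flat $R$-variety, and since the $G$-action on $\mathcal{Y}$ lifts uniquely to the normalization by the universal property of normalization, $\widetilde{\mathcal{Y}}$ carries a good $G$-action (good because it is finite over $\mathcal{Y}$, so orbits still lie in affine opens). Its generic fibre is the normalization of $X$, which is $X$ itself since $X$ is normal. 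This $\widetilde{\mathcal{Y}}=:\mathcal{X}$ is the desired proper $G$-model. The main obstacle is the verification that $\mathcal{Y}$ (hence the ambient product) carries a \emph{good} $G$-action and that the graph construction indeed recovers $X$ $G$-equivariantly on the generic fibre; the properness, flatness, and reducedness bookkeeping is routine, and the finiteness of normalization is exactly what Lemma \ref{lemm-finitenorm} was designed to supply.
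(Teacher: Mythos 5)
Your overall architecture (non-equivariant proper flat model, then a graph/product trick to force equivariance, then normalization via Lemma \ref{lemm-finitenorm}) is a genuinely different route from the paper's, but it has a real gap at the point where you claim the permutation action on $\prod_{g\in G}\mathcal{X}_0$ is \emph{good}. Your justification --- that $\prod_{g}\mathcal{X}_0$ is a fibre product of proper, ``hence (fppf-locally) quasi-projective'', $R$-schemes, so every orbit lies in an affine open --- is not correct: properness does not imply quasi-projectivity, and Nagata's theorem produces only a proper model, with no projectivity. Worse, this is exactly where the statement can fail: the classical Hironaka-type examples of proper non-projective varieties contain finite sets of points lying in no common affine open, and permuting such points gives a finite group action that is not good (the standard example of a quotient that does not exist as a scheme). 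Since your $\mathcal{Y}$ sits inside $\prod_{g}\mathcal{X}_0$ and its orbits in the special fibre have no reason to lie in affine opens, goodness of the action on $\mathcal{Y}$ (hence on $\widetilde{\mathcal{Y}}$) is unproven --- and arranging it is precisely the content of the proposition. (If $X$ is projective over $K$ you can take $\mathcal{X}_0$ projective over $R$ and your argument does go through; the problem is only with general proper $X$.)

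The paper circumvents this by exploiting goodness of the action on $X$ \emph{before} passing to $R$: it forms the quotient $X/G$ (a proper $K$-variety, geometrically reduced if $X$ is), takes a Nagata model $\mathcal{Y}$ of $X/G$ over $R$, and defines $\mathcal{X}$ as the normalization of $\mathcal{Y}$ in $X$. Lemma \ref{lemm-finitenorm} makes this normalization finite, giving properness; the $G$-action on $K(X)$ extends uniquely to $\mathcal{X}$ since $G$ acts trivially on $\mathcal{Y}$; and --- the key point your construction lacks --- goodness of the extended action is automatic because $\mathcal{X}$ is finite over a scheme with trivial $G$-action, so every orbit lies in the preimage of an affine open of $\mathcal{Y}$, which is affine. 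Your remaining bookkeeping (the graph embedding identifying $\mathcal{Y}_K$ with $X$ equivariantly, flatness and reducedness of the schematic closure, goodness lifting along a finite equivariant morphism) is fine, but the proof does not close without repairing the goodness step.
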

\begin{proof}
Since the $G$-action on $X$ is good, the quotient $X/G$ is
representable by a $K$-variety, and the projection morphism
$$X\to X/G$$ is finite and surjective
\cite[V.1.5 and V.1.8]{sga1}. Since $X$ is proper over $K$, we
know that $X/G$ is proper over $K$ \cite[5.4.3]{ega2}. Since
taking $G$-quotients commutes with extension of the base field $K$
\cite[V.1.9]{sga1}, we know that $X/G$ is geometrically reduced if
$X$ is geometrically reduced.

By Nagata's embedding theorem, $X/G$ admits a flat and proper
model $\mathcal{Y}$ over $R$. Let $\mathcal{X}$ be the
normalization of $\mathcal{Y}$ in $X$. Then $\mathcal{X}$ is
normal, and $\mathcal{X}_K$ is canonically isomorphic to $X$. The
normalization map $\mathcal{X}\to \mathcal{Y}$ is finite, by Lemma
\ref{lemm-finitenorm}.

 Thus $\mathcal{X}$ is
a flat proper model for $X$ over $R$. The action of $G$ on $X$
induces an action on the total field of functions $K(X)$ of $X$,
and, since $G$ acts trivially on $\mathcal Y$, the $G$-action on
$X$ extends uniquely to an action on $\mathcal{X}$. The $G$-action
on $\mathcal{X}$ is good since $\mathcal{X}$ is finite over
$\mathcal{Y}$ and $G$ acts trivially on $\mathcal{Y}$.
\end{proof}

\begin{definition}
Let $X$ be a smooth $K$-variety, endowed with a good $G$-action. A
weak N\'eron $G$-model of $X$ is an $R$-smooth $G$-model
$\mathcal{X}$ for $X$ such that the natural map
$$\mathcal{X}(R^{sh})\to X(K^{sh})$$ is bijective.
\end{definition}

\begin{prop}\label{prop-weakG}
If $X$ is a smooth and proper $K$-variety, endowed with a good
$G$-action, 
 then $X$ admits a
weak N\'eron $G$-model.
\end{prop}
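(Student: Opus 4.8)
The plan is to combine the $G$-model produced by Proposition \ref{prop-Gmod} with a $G$-equivariant version of N\'eron smoothening, mimicking the proof of Theorem \ref{thm-weakner} but carrying the group action along throughout. First, since $X$ is smooth and proper, it is in particular normal, so Proposition \ref{prop-Gmod} applies (we may invoke either the excellence hypothesis on $R$ or, if need be, the geometric reducedness that follows from smoothness) and yields a proper $G$-model $\mathcal{X}$ over $R$ with $\mathcal{X}_K\cong X$ as $G$-schemes. Because $X$ is proper and hence bounded (by \cite[1.1.6]{neron}), the valuative criterion of properness shows that $\mathcal{X}(R^{sh})\to X(K^{sh})$ is already a bijection; the only thing that may fail is $R$-smoothness of $\mathcal{X}$. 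So the real content is: smoothen $\mathcal{X}$ without losing either the $G$-action or the bijectivity on $R^{sh}$-points.

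The key step is to run a N\'eron smoothening process equivariantly. Over $R^{sh}$, the special fiber $\mathcal{X}\times_R k^s$ acquires a $G$-action, and one stratifies it into $G$-stable locally closed pieces according to, say, the rank of the relative cotangent sheaf, so that the non-smooth locus is a $G$-stable closed subscheme of the special fiber. One then dilates (blows up) $\mathcal{X}$ along the $G$-stable ideal sheaf defining the closure of this locus in $\mathcal{X}$. Since the center is $G$-stable, the blow-up $\mathcal{X}'\to \mathcal{X}$ carries a canonical $G$-action, it is projective over $\mathcal{X}$ (hence proper over $R$, and the $G$-action is good because $\mathcal{X}$ is proper over the affine $\Spec R$, cf.\ the quasi-projectivity criterion \cite[3.3.36(b)]{liu}), and $\mathcal{X}'_K\to \mathcal{X}_K\cong X$ is an equivariant isomorphism. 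The standard N\'eron smoothening argument \cite[\S 3.4]{neron} guarantees that after finitely many such dilatations the $R^{sh}$-points all factor through the smooth locus; the point is that each of the elementary steps in that argument — choosing the center inside the non-smooth locus of the special fiber, and the numerical invariant that strictly decreases — can be chosen $G$-equivariantly, because the non-smooth locus is intrinsic and hence automatically $G$-stable. As in the proof of Theorem \ref{thm-weakner}, one then passes to the $R$-smooth locus $\Sm(\mathcal{X}')$, which is $G$-stable (smoothness being an intrinsic, hence $G$-invariant, condition) and open, so it inherits a good $G$-action; by the valuative criterion applied to the proper morphism $\mathcal{X}'\to \mathcal{X}$ the map $\mathcal{X}'(R^{sh})\to \mathcal{X}(R^{sh})$ is a bijection, and by construction every $R^{sh}$-point of $\mathcal{X}'$ lands in $\Sm(\mathcal{X}')$, so $\Sm(\mathcal{X}')(R^{sh})=\mathcal{X}'(R^{sh})=\mathcal{X}(R^{sh})=X(K^{sh})$. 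Thus $\Sm(\mathcal{X}')$ is a weak N\'eron $G$-model for $X$.

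The main obstacle I expect is not any single hard estimate but rather the bookkeeping needed to confirm that \emph{every} input to the N\'eron smoothening algorithm of \cite[3.4.2]{neron} can be made $G$-equivariant simultaneously — in particular that the relevant dilatation centers can be taken $G$-stable at every stage and that goodness of the $G$-action is preserved under each blow-up. The cleanest way to handle this is to observe that the non-smooth locus of the special fiber, and more generally the stratification by fibre dimension of the sheaf of relative differentials, is defined functorially and hence is preserved by any automorphism over $R$; therefore its defining ideal is automatically $G$-stable, and the blow-up along it is canonically a $G$-scheme with the blow-down map $G$-equivariant. Goodness is free from properness over $\Spec R$. With that observation in place, the proof of Theorem \ref{thm-weakner} goes through verbatim in the equivariant category, and one concludes by taking the smooth locus.
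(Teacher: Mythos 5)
Your proposal follows essentially the same route as the paper: obtain a proper $G$-model via Proposition \ref{prop-Gmod}, observe that the centers of the blow-ups in the N\'eron smoothening algorithm of \cite[3.4.2]{neron} are canonically defined (hence stable under every $R$-automorphism, in particular under $G$), so the whole sequence of dilatations and the final passage to the smooth locus are $G$-equivariant. One small correction: goodness of the $G$-action on the result is not ``free from properness over $\Spec R$'' — it follows, as you also indicate, from projectivity of the blow-up morphisms over $\mathcal{X}$ combined with goodness of the action on $\mathcal{X}$ itself, which is exactly the paper's argument.
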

\begin{proof}
By Proposition \ref{prop-Gmod}, $X$ admits a  proper $G$-model
$\mathcal{X}$ over $R$. The
 smoothening algorithm in the proof of \cite[3.4.2]{neron}
 produces a canonical sequence of blow-ups
 $$\mathcal{X}^{(r)}\to \ldots \mathcal{X}^{(1)}\to
 \mathcal{X}^{(0)}=\mathcal{X}$$
 such that, for every $i$ in $\{0,\ldots,r-1\}$, the center $C_i$ of the blow-up
 $$\mathcal{X}^{(i+1)}\to \mathcal{X}^{(i)}$$ is a reduced closed
 subscheme of the special fiber $\mathcal{X}^{(i)}_k$ and such
 that $\Sm(\mathcal{X}^{(r)})$ is a weak N\'eron model for $X$.
  Inspecting the proof of \cite[3.4.2]{neron}, we see that the center $C_i$ is
 stable under any $R$-automorphism of $\mathcal{X}^{(i)}$. It
 follows that all the models $\mathcal{X}^{(i)}$ of $X$ carry a
 unique extension of the $G$-action on $X$ and that all the blow-up morphisms
are $G$-equivariant.
 Moreover, the $G$-action on
 $\Sm(\mathcal{X}^{(r)})$ is good, since the $G$-action on
 $\mathcal{X}$ is good and the blow-up morphisms are projective.
  We conclude that $\Sm(\mathcal{X}^{(r)})$ is a weak N\'eron $G$-model
 of $X$.
\end{proof}

\begin{prop}\label{prop-Gfix}
Let $X$ be a smooth $K$-variety with good $G$-action, and let
$\mathcal{X}$ be a weak N\'eron $G$-model for $X$. Assume
 that the order $|G|$ of $G$ is prime to $p$. Then the $G$-fixed
locus $\mathcal{X}^G$ is a weak N\'eron model for $X^G$.
\end{prop}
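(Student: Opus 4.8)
The plan is to verify the two defining properties of a weak N\'eron model for $X^G$: that $\mathcal{X}^G$ is $R$-smooth, and that the natural map $\mathcal{X}^G(R^{sh})\to (X^G)(K^{sh})$ is a bijection. The first property is immediate: by hypothesis $|G|$ is prime to $p$, hence invertible on the $R$-smooth scheme $\mathcal{X}$, so Proposition \ref{prop-smooth} applies and $\mathcal{X}^G\to \Spec R$ is smooth. Note also that $\mathcal{X}^G$ is an $R$-variety since it is a closed subscheme of $\mathcal{X}$ (the $G$-action on $\mathcal{X}$ is good, and the fixed-locus functor exists and commutes with base change; in particular $(\mathcal{X}^G)_K = (\mathcal{X}_K)^G = X^G$, using that $(\cdot)^G$ commutes with the base change $\Spec K\to\Spec R$ and the given $G$-equivariant isomorphism $\mathcal{X}_K\cong X$). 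So $\mathcal{X}^G$ is a smooth $R$-model of $X^G$, and it only remains to check the point-lifting bijection.

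For the bijectivity of $\mathcal{X}^G(R^{sh})\to X^G(K^{sh})$, the key observation is that both sides are simply the $G$-fixed subsets of $\mathcal{X}(R^{sh})$ and $X(K^{sh})$ respectively. Indeed, since $\Spec R^{sh}$ and $\Spec K^{sh}$ carry the trivial $G$-action and $(\cdot)^G$ is right adjoint to the trivial-action functor, one has
$$\mathcal{X}^G(R^{sh}) = \mathcal{X}(R^{sh})^G,\qquad X^G(K^{sh}) = X(K^{sh})^G,$$
functorially in the obvious way. Now $\mathcal{X}$ is a weak N\'eron $G$-model of $X$, so the natural map $\mathcal{X}(R^{sh})\to X(K^{sh})$ is a bijection, and it is $G$-equivariant because the isomorphism $\mathcal{X}_K\cong X$ is $G$-equivariant. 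A $G$-equivariant bijection of $G$-sets restricts to a bijection on $G$-fixed points. Hence $\mathcal{X}^G(R^{sh})\to X^G(K^{sh})$ is a bijection, and one checks this is the natural map induced by $\mathcal{X}^G_K\cong X^G$, as required.

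The only genuinely delicate point — and the step I would be most careful about — is the functorial identification $Y^G(T) = Y(T)^G$ for $Y$ a $G$-scheme over $S$ and $T$ an $S$-scheme with trivial $G$-action, together with the fact that $(\cdot)^G$ commutes with the base change $\Spec R^{sh}\to \Spec R$ (resp.\ $\Spec K^{sh}\to \Spec K$), so that $(\mathcal{X}^G)\times_R R^{sh} = (\mathcal{X}\times_R R^{sh})^G$; this is exactly what is recorded in the discussion of the functor $(\cdot)^G$ preceding Proposition \ref{prop-smooth} (commutation with arbitrary base change, and representability of $T\mapsto X(T)^G$). Once that bookkeeping is in place, the rest is formal: smoothness comes from Proposition \ref{prop-smooth}, and the bijection on integral points is inherited from the corresponding bijection for the ambient weak N\'eron $G$-model $\mathcal{X}$ by passing to $G$-fixed points of a $G$-equivariant bijection.
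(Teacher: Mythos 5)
Your proposal is correct and follows essentially the same route as the paper's own proof: smoothness of $\mathcal{X}^G$ over $R$ via Proposition \ref{prop-smooth}, and the bijection on $R^{sh}$-points obtained from the chain of identifications $\mathcal{X}^G(R^{sh})=\big(\mathcal{X}(R^{sh})\big)^G=\big(X(K^{sh})\big)^G=X^G(K^{sh})$. The extra bookkeeping you spell out (commutation of $(\cdot)^G$ with base change and the identification $(\mathcal{X}^G)_K=X^G$) is exactly what the paper implicitly relies on from the discussion preceding Proposition \ref{prop-smooth}.
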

\begin{proof}
The scheme $\mathcal{X}^G$ is smooth over $R$, by Proposition
\ref{prop-smooth}. 
 Moreover, we have
$$\mathcal{X}^G(R^{sh})=
\big(\mathcal{X}(R^{sh})\big)^G=\big(X(K^{sh})\big)^G=X^G(K^{sh}).
$$ This shows  that $\mathcal{X}^G$ is a weak N\'eron model for
$X^G$.
\end{proof}


\section{Serre's question}\label{sec-serre}
In \cite[\S\,1]{serre-finite}, Serre raises the following
question.

\begin{question}\label{qu-serre}
Let $F$ be a field, and let $G$ be a finite $\ell$-group, for some
prime $\ell$ invertible in $F$. Suppose that $G$ acts on $\A^n_F$
for some integer $n>0$. Does this action always admit an
$F$-rational fixed point?
\end{question}

\begin{remark}\label{rem-serre}
As Serre points out, even the case $F=\Q$, $n=3$ and $|G|=2$ is
open \cite[p.2]{serre-finite}.
\end{remark}

It is clear that the condition that $\ell$ is invertible in $F$
cannot be omitted. For instance, for every field $F$ of
characteristic $\ell>1$, the additive group
$\mathbb{G}_a(\mathbb{F}_\ell)$ acts on $\A^1_F$ by translation,
and this action does not admit a fixed point.

We now discuss some cases where Serre's question can be answered.
We start by recalling
 in Sections \ref{subsec-finite} and \ref{subsec-algclosed}
some cases that are due to Serre. In Section \ref{subsec-lowdim},
we show that Serre's question has a positive answer in dimension
$\leq 2$, as implied by Serre in Remark \ref{rem-serre}. We also
discuss the methods Serre uses, which immediately yield the real
case (see Section \ref{sub-r}). Finally, we treat in Section
\ref{sub-hens} the case where the ground field is a henselian
discretely valued field of characteristic zero, with algebraically
closed residue field of characteristic different from $\ell$. We
show that, in that case as well, Serre's question has a positive
answer.

{\it For the remainder of  Section \ref{sec-serre}, we maintain
the assumption that $G$ is a finite $\ell$-group, with $\ell$ a
prime invertible in the field $F$.}
\subsection{The case where $F$ is finite}\label{subsec-finite}
\begin{prop}[see \cite{serre-finite}, proof of Theorem 1.2.] \label{prop-finfield}
Let $F$ be a finite field, and let $X$ be an $F$-variety with
$G$-action. Then
$$|X(F)|\equiv |X^G(F)|\mod \ell.$$
In particular, if $X=\A^n_F$ for some integer $n>0$, then $X^G(F)$
is non-empty.
\end{prop}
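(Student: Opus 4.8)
The plan is to exploit the fact that $G$ is an $\ell$-group acting on the finite set $X(F)$, and that $|F|$ is prime to $\ell$.

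First I would observe that $G$ acts on the finite set $X(F)$, and that the orbit-stabilizer theorem gives, for each $x \in X(F)$, that the cardinality of the orbit $G\cdot x$ equals $[G : \mathrm{Stab}_G(x)]$. Since $G$ is an $\ell$-group, this index is a power of $\ell$; it equals $1$ precisely when $x$ is fixed by all of $G$, i.e. when $x \in X^G(F)$, and otherwise it is divisible by $\ell$. Partitioning $X(F)$ into $G$-orbits and summing cardinalities therefore yields
\[
|X(F)| = |X^G(F)| + \sum_{\text{non-trivial orbits } \mathcal{O}} |\mathcal{O}| \equiv |X^G(F)| \pmod{\ell},
\]
which is the asserted congruence. (Here I am using that $X^G$ represents the functor $T \mapsto X(T)^G$, so that $X^G(F)$ is exactly the set of $G$-fixed $F$-points of $X$; this was recorded in Section \ref{sec-notation}.)

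For the final assertion, I would take $X = \A^n_F$. Counting $F$-points of affine space gives $|\A^n_F(F)| = q^n$ where $q = |F|$, and since $\ell$ is invertible in $F$ we have $\gcd(q,\ell) = 1$, hence $q^n \not\equiv 0 \pmod{\ell}$. By the congruence just established, $|X^G(F)| \equiv q^n \not\equiv 0 \pmod \ell$, so in particular $|X^G(F)| \neq 0$, i.e. $X^G(F) = (\A^n_F)^G(F)$ is non-empty.

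There is no real obstacle here: the argument is the elementary orbit-counting argument for $\ell$-groups (a finite-field version of the class equation). The only point requiring a moment's care is the identification of the scheme-theoretic fixed locus $X^G$ with the naive set of $G$-fixed rational points, which is immediate from the functorial description of $X^G$ quoted earlier; and, for the affine-space application, the standard computation $|\A^n_F(F)| = q^n$ together with the hypothesis that $\ell$ is invertible in $F$.
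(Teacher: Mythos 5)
Your proof is correct and follows essentially the same route as the paper: partition $X(F)$ into $G$-orbits, note that non-trivial orbits have cardinality divisible by $\ell$ since $G$ is an $\ell$-group, and conclude with $|\A^n_F(F)|=q^n$ being prime to $\ell$. The extra care you take in identifying $X^G(F)$ with the set of $G$-fixed points of $X(F)$ via the functorial description of the fixed locus is a valid (if implicit in the paper) justification.
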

\begin{proof}
For every orbit $O$ of the $G$-action on $X(F)$, the cardinality
of $O$ is a power of $\ell$. Thus, either $O$ consists of a single
fixed point, or the cardinality of $O$ is divisible by $\ell$.
Since the orbits form a partition of $X(F)$, we find
$$|X(F)|\equiv |X^G(F)|\mod \ell.$$
If $X=\A^n_F$ then $|X(F)|=|F|^n$, and this value is not divisible
by $\ell$. It follows that $X^G(F)$ is non-empty.
\end{proof}
\subsection{The case where $F$ is separably
closed}\label{subsec-algclosed} Serre gives several proofs for
this case. In fact, he makes the stronger assumption that $F$ is
algebraically closed, but this is not necessary: if $X$ is a
smooth $F$-variety with $G$-action, then $X^G$ is smooth
(Proposition \ref{prop-smooth}). Hence, if $F$ is separably closed, then
$X^G$ has an $F$-rational point as soon as it is non-empty.

The proof of Theorem 1.2 in \cite{serre-finite} uses a spreading
out argument to reduce to the case where $F$ is finite. Four other
proofs make use of \'etale cohomology with $\ell$-adic
coefficients  \cite[\S\,7]{serre-finite}. Let us briefly recall
three of them, since these arguments will reappear in another
context below.

\subsubsection{Multiplicativity of the Euler characteristic in tame
coverings} \label{subsubsec-euler}
Serre proves the following
result.
\begin{prop}[Serre \cite{serre-finite}, Section 7.2]\label{prop-euler}
Let $F$ be any field, and let $X$ be an $F$-variety with good
$G$-action. Then one has  the following congruence of $\ell$-adic
Euler characteristics:
$$\chi_c(X)\equiv \chi_c(X^G)\mod \ell.$$
In particular, if $\ell$ does not divide $\chi_c(X)$, then $X^G$
is non-empty.
\end{prop}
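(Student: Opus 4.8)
The plan is to reduce to the case where the ground field is finite, where the result is just Proposition~\ref{prop-finfield} together with the interpretation of $\chi_c$ of a finite-field variety via the Grothendieck--Lefschetz trace formula. The statement is purely about the $\ell$-adic Euler characteristic with proper supports of $X$ and of its fixed locus $X^G$, so the strategy is a standard spreading-out argument: the variety $X$, the $G$-action on it, the open immersion of the $G$-smooth locus, and the closed immersion $X^G\hookrightarrow X$ are all defined by finitely much data, and can be spread out over a finitely generated $\Z$-subalgebra $A$ of $F$ (with $\ell$ invertible in $A$). One then specializes at a closed point of $\Spec A$, whose residue field is finite of cardinality prime to $\ell$, and invokes Proposition~\ref{prop-finfield}.

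First I would choose a model: there is a finitely generated $\Z[1/\ell]$-subalgebra $A\subseteq F$ and a separated $A$-scheme of finite type $\X_A$ with a good $G$-action, such that $\X_A\times_A F\cong X$ equivariantly. Shrinking $\Spec A$, we may assume $\X_A\to\Spec A$ has constant relevant invariants, and we form $(\X_A)^G$; since formation of the fixed locus commutes with arbitrary base change (as recalled in the Group actions subsection), $(\X_A)^G\times_A F\cong X^G$. Now pick a closed point $\mathfrak{p}\in\Spec A$; its residue field $\kappa(\mathfrak{p})$ is finite, of characteristic different from $\ell$ (because $\ell$ is invertible on $\Spec A$). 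Let $X_0=\X_A\times_A\kappa(\mathfrak{p})$, a $\kappa(\mathfrak{p})$-variety with good $G$-action, and note $X_0^G=(\X_A)^G\times_A\kappa(\mathfrak{p})$ by base change. By Proposition~\ref{prop-finfield} we get $|X_0(\kappa(\mathfrak{p}))|\equiv|X_0^G(\kappa(\mathfrak{p}))|\bmod\ell$, and more generally, applying this over every finite extension of $\kappa(\mathfrak{p})$, we control the numbers $|X_0(\F_{q^m})|$ modulo $\ell$ for all $m$. Combining this with the Grothendieck--Lefschetz trace formula, which expresses $\chi_c(X_0)$ as an alternating sum of traces of Frobenius, one deduces $\chi_c(X_0)\equiv\chi_c(X_0^G)\bmod\ell$; this is exactly the cohomological reformulation sketched for $\A^n$ in the introduction, here with an arbitrary $X_0$ in place of $\A^n$. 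Finally, constructibility of $R^i\pi_!\Q_\ell$ and smooth/proper base change (or simply the invariance of $\chi_c$ under algebraically closed field extensions, which is part of the definition recalled in Section~\ref{sec-notation}) give $\chi_c(X_0)=\chi_c(X)$ and $\chi_c(X_0^G)=\chi_c(X^G)$, so the congruence transports back to $F$.

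For the ``In particular'' statement, if $X=\A^n_F$ then $\chi_c(X)=1$, which is not divisible by $\ell$, hence $\chi_c(X^G)\not\equiv 0\bmod\ell$; in particular $\chi_c(X^G)\neq 0$, so $X^G$ is non-empty.

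The main obstacle is the passage from the mod-$\ell$ congruence of point counts over all finite extensions of $\kappa(\mathfrak{p})$ to the mod-$\ell$ congruence of Euler characteristics; this is where one must invoke the trace formula and argue that knowing $|X_0(\F_{q^m})|\bmod\ell$ for all $m$ pins down $\sum_i(-1)^i\mathrm{tr}(\mathrm{Frob}^m\mid H^i_c)\bmod\ell$, and then specialize $m$ appropriately (or pass to $\ell$-adic limits) to extract $\chi_c\bmod\ell$. An alternative, cleaner route avoiding the trace formula would be to observe that $\chi_c$ is additive and multiplicative, hence factors through $K_0(\Var_{\kappa(\mathfrak{p})})$, and that both sides of the desired congruence are obtained by applying the realization $\chi_c$ to the class $[X_0]-[X_0^G]$; one then wants to see that this class maps to $0$ in $\Z/\ell$ directly. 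But Serre's own argument, which the authors are recalling here, goes through the cohomological computation, so I would follow that and treat the trace-formula bookkeeping as the technical heart of the proof.
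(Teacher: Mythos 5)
Your proposal takes a genuinely different route from the paper, and as written it has a real gap precisely at the step you yourself flag as the ``technical heart''. Knowing $|X_0(\mathbb{F}_{q^m})|\equiv|X_0^G(\mathbb{F}_{q^m})|\bmod\ell$ for all $m\geq 1$ does \emph{not} formally yield $\chi_c(X_0)\equiv\chi_c(X_0^G)\bmod\ell$: the trace formula identifies the point count with the alternating sum of traces of $\mathrm{Frob}^m$ for $m\geq 1$, whereas $\chi_c$ is the alternating sum of dimensions (the ``$m=0$'' value), and there is no purely formal specialization of $m$ that reaches it. To close the gap you need genuine extra input, for instance: the Lefschetz trace formula with $\mathbb{Z}/\ell$-coefficients, the observation that Frobenius has finite order on the finite-dimensional $\mathbb{F}_\ell$-vector spaces $H^i_c(X_0\times\overline{\mathbb{F}}_q,\mathbb{F}_\ell)$ (so that for $m$ divisible by that order the trace of $\mathrm{Frob}^m$ equals the dimension), and the equality of the mod-$\ell$ and $\ell$-adic Euler characteristics; or, alternatively, the integrality of the Frobenius eigenvalues on $H^\ast_c$ together with the fact that they are $\ell$-adic units when $\ell\neq p$. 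None of this is supplied, and your closing suggestion (that the class $[X_0]-[X_0^G]$ should map to $0$ in $\mathbb{Z}/\ell$ under $\chi_c$) merely restates the problem. A smaller point: your ``in particular'' only treats $X=\mathbb{A}^n_F$, while the statement is for arbitrary $X$ (immediate, since $\chi_c(\emptyset)=0$).

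The paper's proof is much more direct and stays over $F$: no spreading out, no point counting. For $|G|=\ell$, the group acts freely on $Y=X\setminus X^G$, so $Y\to Y/G$ is a finite \'etale cover of degree $\ell$; since $\ell$ is invertible in $F$, one has $\chi_c(Y)=\ell\cdot\chi_c(Y/G)$ (multiplicativity of $\chi_c$ in tame covers, the only nonformal input, cited to Illusie), and additivity of $\chi_c$ gives $\chi_c(X)=\chi_c(X^G)+\ell\cdot\chi_c(Y/G)$. The general case follows by induction on $|G|$ using solvability of $\ell$-groups and the identity $X^G=(X^H)^{G/H}$ for $H$ a nontrivial normal subgroup. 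If you want to keep your reduction to finite fields, you must supply the eigenvalue or mod-$\ell$ cohomology argument above; otherwise the covering-space argument is both shorter and logically lighter.
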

\begin{proof}
\textit{Case 1: $|G|=\ell$.} Then $G$ acts freely on the open
subvariety $Y=X\setminus X^G$ of $X$, so that the projection $Y\to
Y/G$ is a finite \'etale covering of degree $\ell$. Since $\ell$
is different from the characteristic of $F$, one has
$$\chi_c(Y)=\ell\cdot \chi_c(Y/G)$$ (see e.g. \cite{illusie}).
By additivity of the Euler characteristic, one
has
$$\chi_c(X)=\chi_c(X^G)+\ell\cdot \chi_c(Y/G)$$ and the result follows.

\textit{Case 2: general case.} We use an induction on the
cardinality of $G$.   Assume that $|G|>\ell$, and that we know the
result for  all finite $\ell$-groups $H$ with $|H|<|G|$. Since all
finite $\ell$-groups are solvable,
 the group $G$ admits a
non-trivial normal subgroup $H$.
 Since
$$X^{G}=(X^H)^{G/H}$$ one has
$$\chi_c(X^G)\equiv \chi_c(X^H)\equiv \chi_c(X)\mod \ell.$$
\end{proof}
\begin{cor}\label{cor-euler}
Assume that $F$ is separably closed, and let $X$ be a smooth
$F$-variety with good $G$-action. If $\ell$ does not divide
$\chi_c(X)$, then $X^G(F)$ is non-empty. In particular, if
$X=\A^n_F$ for some integer $n>0$, then $X^G(F)$ is non-empty.
\end{cor}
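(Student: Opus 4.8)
The plan is to derive Corollary \ref{cor-euler} directly from Proposition \ref{prop-euler} together with Proposition \ref{prop-smooth}. First I would invoke Proposition \ref{prop-euler}, which applies to any field $F$ and any $F$-variety $X$ with good $G$-action: it gives the congruence $\chi_c(X)\equiv \chi_c(X^G)\bmod \ell$. Hence if $\ell\nmid \chi_c(X)$, then $\ell\nmid \chi_c(X^G)$; in particular $\chi_c(X^G)\neq 0$, so $X^G$ is non-empty.

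The second step is to upgrade ``non-empty'' to ``has an $F$-rational point'', using that $F$ is separably closed. Since $X$ is smooth over $F$ and $|G|$ is invertible in $F$ (because $G$ is an $\ell$-group and $\ell$ is invertible in $F$), Proposition \ref{prop-smooth} applies with $S=\Spec F$: the fixed locus $X^G\to \Spec F$ is smooth. A non-empty smooth scheme of finite type over a separably closed field always has an $F$-rational point (smooth morphisms have sections \'etale-locally, and over a separably closed field this suffices). Therefore $X^G(F)\neq \emptyset$.

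For the final assertion, apply this to $X=\A^n_F$ with its good $G$-action (any action on an affine scheme is good, as every orbit lies in the affine scheme itself). Since $\A^n_F$ is smooth over $F$ and $\chi_c(\A^n_F)=1$, which is not divisible by $\ell$, the hypothesis of the corollary is satisfied, and we conclude $(\A^n_F)^G(F)\neq \emptyset$.

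I do not anticipate a genuine obstacle here: the statement is essentially a packaging of the two cited propositions. The only point requiring a word of care is the passage from a non-empty smooth $F$-variety to the existence of an $F$-point over a separably closed field $F$ — this is standard but should be stated, since it is exactly where separable (rather than algebraic) closedness of $F$ is used, matching the remark made just before Section \ref{subsubsec-euler}.
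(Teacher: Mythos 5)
Your argument is correct and is exactly the one the paper intends: Proposition \ref{prop-euler} gives $\chi_c(X^G)\equiv\chi_c(X)\not\equiv 0 \bmod \ell$, hence $X^G\neq\emptyset$, and smoothness of $X^G$ (Proposition \ref{prop-smooth}) over the separably closed field $F$ yields an $F$-rational point, as the paper itself notes at the start of Section \ref{subsec-algclosed}. No discrepancies to report.
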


\subsubsection{The Grothendieck-Lefschetz-Verdier trace formula} \label{verdier}
Assume that $F$ is separably closed. Let $X$ be an irreducible
$F$-variety of dimension $n$ with $G$-action, and assume that
$H^i_c(X,\Q_\ell)=0$ for $i\neq 2n$. This condition is fulfilled,
for instance, if $X$ is $\A^n_F$.
 Since $H^{2n}_c(X,\Q_\ell)$ has dimension one, and $G$ acts
trivially on this cohomology space, we find that $t(g)=1$ for
every element $g$ of $G$, where
$$t(g)=\sum_{i\geq 0}(-1)^i
\mathrm{Trace}(g\,|\,H^i_c(X,\Q_\ell)).$$ On the other hand, the
Grothendieck-Lefschetz-Verdier trace formula \cite[III]{sga5}
implies that $t(g)=0$ if $g$ does not have a fixed point on $X$.
It follows that the $G$-action on $X$ admits a fixed point if $G$
is cyclic. If $X$ is smooth, there exists even an $F$-rational
fixed point, by smoothness of $X^G$.

\subsubsection{Smith theory} This approach yields the most detailed
information about
 the fixed locus of $G$. In \cite[7.5]{serre-finite}, Serre
proves the following result, using Smith theory (again, Serre
makes the stronger assumption that $F$ is algebraically closed,
but the case where $F$ is separably closed follows from this by
invariance of \'etale cohomology under purely inseparable base
change).

\begin{theorem}\label{thm-smith}
Assume that $F$ is separably closed. Let $X$ be an $F$-variety
with good $G$-action. Assume that $H^i(X,\Z/\ell)=0$ for $i>0$
and that $X$ is connected and
 non-empty. Then $H^i(X^G,\Z/\ell)=0$ for $i>0$ and
$X^G$ is connected and non-empty.
\end{theorem}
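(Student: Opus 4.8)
The strategy is to reduce to Serre's theorem over an algebraically closed field, \cite[7.5]{serre-finite}, by purely inseparable descent — the route indicated in the remark preceding the statement. If $F$ has characteristic zero, then a separably closed field is algebraically closed, so $F$ is itself algebraically closed and the assertion is literally Serre's. Assume therefore that $\mathrm{char}\,F=p>0$, and fix an algebraic closure $F^a$ of $F$; since $F$ is separably closed, $F^a/F$ is purely inseparable (and $p\neq\ell$ by the standing assumption of Section~\ref{sec-serre}). Note that Serre's result, being a statement about \'etale cohomology, applies to any separated scheme of finite type with good $G$-action, not only to varieties.

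I would then base change to $F^a$, apply Serre's theorem there, and descend. The morphism $\Spec F^a\to\Spec F$ is an affine universal homeomorphism, so for every separated $F$-scheme of finite type $Z$ the projection $Z_{F^a}:=Z\times_F F^a\to Z$ is an affine universal homeomorphism; it therefore induces an equivalence of \'etale topoi, whence $H^i(Z,\Z/\ell)\cong H^i(Z_{F^a},\Z/\ell)$ for all $i$, and $Z_{F^a}$ is connected (resp.\ non-empty) if and only if $Z$ is. Since $(\cdot)^G$ commutes with arbitrary base change (Section~\ref{sec-notation}), one has $X^G_{F^a}=(X_{F^a})^G$, and $X_{F^a}$ inherits a good $G$-action because $X_{F^a}\to X$ is affine and $G$-equivariant. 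Taking $Z=X$, the hypotheses of the theorem transfer to $X_{F^a}$ over the algebraically closed field $F^a$; Serre's theorem then gives $H^i((X_{F^a})^G,\Z/\ell)=0$ for $i>0$ with $(X_{F^a})^G$ connected and non-empty. Taking now $Z=X^G$ and using $X^G_{F^a}=(X_{F^a})^G$, the conclusions descend: $H^i(X^G,\Z/\ell)=0$ for $i>0$, and $X^G$ is connected and non-empty.

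For orientation I record the structure of Serre's proof of the algebraically closed case. One reduces to $G\cong\Z/\ell$ by induction on $|G|$: if $|G|>\ell$, a maximal proper subgroup $H\triangleleft G$ has index $\ell$, and since $X^G=(X^H)^{G/H}$ while $X^H$ carries a good $G/H$-action (a $G/H$-orbit in $X^H$ sits inside a $G$-orbit of $X$, hence in an affine open $V$, and $V\cap X^H$ is affine), it suffices to treat $H$ by induction and then the $\Z/\ell$-action of $G/H$ on $X^H$. For $G=\Z/\ell=\langle t\rangle$ one invokes \'etale Smith theory: on the quotient $\pi:X\to X/G$ the sheaf $\pi_*(\Z/\ell)$ is a module over $\Z/\ell[G]\cong\Z/\ell[t]/(t-1)^\ell$, and the subsheaves $(t-1)^\rho\,\pi_*(\Z/\ell)$ give a filtration whose graded pieces are, off the image of $X^G$, local systems attached to the $G$-torsor $X\setminus X^G\to(X\setminus X^G)/G$, and, over that image, $\Z/\ell$ or $0$; the resulting Smith long exact sequences, relating $H^*(X,\Z/\ell)$, $H^*(X^G,\Z/\ell)$ and the auxiliary ``special'' cohomology, combined with the $\Z/\ell$-acyclicity and connectedness of $X$, force $X^G$ to be $\Z/\ell$-acyclic, connected and non-empty. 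The descent carried out above is entirely routine, using only universal homeomorphisms and the base-change property of $(\cdot)^G$ already recorded in the paper; the substantive input — settled in \cite{serre-finite} — is the \'etale Smith theory, whose delicate feature is that $X/G$ is in general neither proper nor $\Z/\ell$-acyclic, so the Smith filtration must be handled sheaf-theoretically on $X/G$ rather than by a naive computation over a point.
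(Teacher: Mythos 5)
Your argument is correct and is exactly the one the paper intends: the paper gives no separate proof environment for Theorem \ref{thm-smith}, but the parenthetical remark preceding it says precisely that Serre's algebraically closed case \cite[7.5]{serre-finite} implies the separably closed case by invariance of \'etale cohomology under purely inseparable base change, which is the descent you carry out (correctly, via the universal homeomorphism $\Spec F^a\to\Spec F$ and the base-change compatibility of $(\cdot)^G$). Your additional sketch of Serre's Smith-theoretic argument is accurate but not required, since that input is simply cited.
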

\begin{cor}\label{cor-smith}
Assume that $F$ is separably closed, and let $n$ be a non-zero natural number.
For any action of $G$ on $X=\A^n_F$, one has
$H^i(X^G,\Z/\ell)=0$ for all $i>0$, and $X^G$ is smooth, connected
and non-empty. In particular, $X^G(F)$ is non-empty.
\end{cor}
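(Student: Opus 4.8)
The plan is to apply Theorem \ref{thm-smith} directly to $X=\A^n_F$, and then to upgrade the conclusion using smoothness. First I would observe that the $G$-action on $\A^n_F$ is automatically good: since $\A^n_F$ is quasi-projective over the affine scheme $\Spec F$, every orbit is contained in an affine open subscheme by \cite[3.3.36(b)]{liu}. Next I would verify the cohomological hypothesis of Theorem \ref{thm-smith}. Because $F$ is separably closed and $\ell$ is invertible in $F$, the étale homotopy invariance theorem gives a canonical isomorphism $H^i(\A^n_F,\Z/\ell)\cong H^i(\Spec F,\Z/\ell)$, which vanishes for $i>0$; moreover $\A^n_F$ is evidently connected and non-empty. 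Theorem \ref{thm-smith} then yields that $H^i(X^G,\Z/\ell)=0$ for all $i>0$ and that $X^G$ is connected and non-empty.

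It remains to record smoothness and the existence of an $F$-rational point. Smoothness of $X^G$ over $F$ is immediate from Proposition \ref{prop-smooth}: the structure morphism $\A^n_F\to \Spec F$ is smooth, and $|G|$, being a power of $\ell$, is invertible on $\A^n_F$. Finally, a non-empty smooth scheme of finite type over a separably closed field has a rational point, since any closed point lying in the (dense) smooth locus has residue field a finite separable extension of $F$, hence equal to $F$. This is exactly the observation made at the beginning of Section \ref{subsec-algclosed}, and it shows $X^G(F)$ is non-empty.

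There is essentially no serious obstacle: the only non-formal ingredient is the vanishing $H^i(\A^n_F,\Z/\ell)=0$ for $i>0$, which is the standard étale homotopy invariance statement and is valid precisely because $\ell$ is invertible in $F$. Everything else is a formal combination of results already available in the excerpt, namely Theorem \ref{thm-smith}, Proposition \ref{prop-smooth}, and the goodness criterion for quasi-projective schemes over an affine base.
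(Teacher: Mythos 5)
Your proof is correct and follows exactly the route the paper intends: the corollary is stated without proof as an immediate consequence of Theorem \ref{thm-smith} (applied to the acyclic, connected variety $\A^n_F$) together with Proposition \ref{prop-smooth} and the observation at the start of Section \ref{subsec-algclosed} that a non-empty smooth variety over a separably closed field has a rational point. All the ingredients you cite, including goodness of the action via quasi-projectivity and homotopy invariance of \'etale cohomology with $\Z/\ell$-coefficients for $\ell$ invertible in $F$, are the right ones.
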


Now let us look at some other cases where we can obtain a positive
answer to Question \ref{qu-serre}.

\subsection{The case where $n\leq 2$}\label{subsec-lowdim}
If $F$ is any field, the automorphism group of $\A^1_F=\Spec F[x]$
is easy to describe: a straightforward computation shows that it
is the group of affine transformations $$x\mapsto ax+b$$ with
$a\in F^{\times}$ and $b\in F$. Such an affine transformation $g$
has finite order $N>1$, with $N$ not divisible by the
characteristic of $F$, if and only if $a$ is a primitive $N$-th
root of unity.
 Then $x=b\cdot (1-a)^{-1}\in F$ is a fixed point, it is
the only one and it is rational.


\begin{prop}\label{prop-dim1}
Let $F$ be any field. If $G$ acts non-trivially on $\A^1_F$, then
there exists a unique fixed point, and it is $F$-rational.
\end{prop}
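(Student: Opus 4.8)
The plan is to reduce immediately to the explicit description of $\Aut(\A^1_F)$ given in the paragraph preceding the statement, and then do the elementary linear-algebra computation. Concretely, any automorphism of $\A^1_F=\Spec F[x]$ is an $F$-algebra automorphism of $F[x]$, hence determined by the image of $x$, which must be a polynomial $ax+b$ with $a\in F^\times$ and $b\in F$ (the inverse being $a^{-1}x-a^{-1}b$). Thus the $G$-action is given by a homomorphism $\rho\colon G\to \Aff(F)$, where $\Aff(F)$ is the group of such affine transformations, and saying the action is nontrivial means $\rho$ is not the trivial homomorphism.

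First I would fix a generator issue: $G$ need not be cyclic here, so I cannot simply talk about "the" $a$ and "the" $b$. Instead I would argue as follows. Consider the homomorphism $\Aff(F)\to F^\times$ sending $x\mapsto ax+b$ to its linear part $a$; its kernel is the subgroup of translations $x\mapsto x+b$, which is isomorphic to the additive group $(F,+)$. Composing with $\rho$ gives $\chi\colon G\to F^\times$. I claim $\chi$ is injective on the image of $\rho$ in a suitable sense — more precisely, I claim $\ker\chi$ acts trivially. Indeed $\rho(\ker\chi)$ lands in the translation subgroup $(F,+)$, but $G$ is a finite $\ell$-group with $\ell$ invertible in $F$, so any finite subgroup of $(F,+)$ is an $\ell$-group embedded in a group of exponent equal to the characteristic of $F$ (or torsion-free, in characteristic $0$); since $\ell$ is invertible in $F$, an element of order a power of $\ell$ in $(F,+)$ must be $0$. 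Hence $\rho(\ker\chi)=\{\mathrm{id}\}$.

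Therefore the action factors through the cyclic quotient $G/\ker\chi$, which (being nontrivial, since the whole action is nontrivial) is cyclic of order $N>1$ with $N$ a power of $\ell$, hence prime to the characteristic of $F$. Pick $g\in G$ mapping to a generator; then $\rho(g)\colon x\mapsto ax+b$ with $a$ a primitive $N$-th root of unity in $F$, in particular $a\neq 1$. The fixed locus $(\A^1_F)^G$ is cut out, functorially, by the equations $\rho(h)(x)=x$ for all $h\in G$; since the action factors through $\langle g\rangle$ it suffices to impose $ax+b=x$, i.e. $(1-a)x=-b$. As $1-a\in F^\times$, this defines the single reduced point $x=b(1-a)^{-1}\in F$. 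This point is $F$-rational and is the unique fixed point; it is visibly independent of the choice of $g$ (any two generators give the same fixed point, as the fixed locus is intrinsic). This completes the proof.

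I do not expect any real obstacle: the only point requiring a moment's care is the claim that $\ker\chi$ acts trivially, i.e. that a finite $\ell$-group cannot act nontrivially on $\A^1_F$ purely by translations when $\ell$ is invertible in $F$ — this is exactly the phenomenon behind the remark in the excerpt that $\mathbb{G}_a(\mathbb{F}_\ell)$ acting by translation in characteristic $\ell$ has no fixed point, and here the invertibility of $\ell$ is what rules it out.
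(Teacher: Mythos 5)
Your proof is correct, but it takes a genuinely different route from the paper's. The paper argues by induction on $|G|$: for $|G|=\ell$ the group is cyclic and the claim follows from the explicit description of affine transformations of finite order prime to the characteristic given just before the statement; for larger $G$ it invokes solvability of $\ell$-groups to produce a non-trivial normal subgroup $H$ and then passes to $(\A^1_F)^H$ and the induced action of $G/H$ on it. You avoid induction altogether by analyzing the structure of $\Aut(\A^1_F)=\Aff(F)\cong F\rtimes F^\times$ directly: since $(F,+)$ has no non-trivial elements of order a power of $\ell$ when $\ell$ is invertible in $F$, the image of $\rho$ meets the translation subgroup trivially, so $\rho(G)$ injects into $F^\times$ via the linear part and is therefore cyclic, generated by some $x\mapsto ax+b$ with $a$ a primitive $N$-th root of unity, $N>1$; the unique fixed point $x=b(1-a)^{-1}$ then falls out of the linear computation, and the fixed locus is visibly the reduced $F$-rational point $\Spec F[x]/((1-a)x-b)$. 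Both arguments are complete. The paper's inductive template has the virtue of being the same one reused elsewhere (Proposition \ref{prop-euler}, Theorem \ref{thm-dim2}), keeping the exposition uniform, whereas your argument is more self-contained and yields slightly more information: the image of $G$ in $\Aut(\A^1_F)$ is cyclic and, after translating the fixed point to the origin, linear.
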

\begin{proof}
We argue by induction on $|G|$. If $|G|=\ell$ then $G$ is
generated by a single element $g$ which acts on $\A^1_F$ as an
affine transformation, so, as we have seen, there is a unique
fixed point, which is rational.

We assume that $|G|>\ell$, and  that we know the result for all
finite $\ell$-groups $H$ with $|H|<|G|$. As all finite
$\ell$-groups are solvable, $G$ admits a non-trivial normal
subgroup $H$. If $H$ acts trivially on $\A^1_F$, then we can apply
the induction hypothesis to the $G/H$-action on $\A^1_F$ and the
result follows. If $H$ acts non-trivially, then by the induction
hypothesis $(\A^1_F)^H$ is isomorphic to $\Spec F$, and the
$G/H$-action on $(\A^1_F)^H$ is trivial.
\end{proof}

\begin{remark}
We can also immediately deduce Proposition \ref{prop-dim1} from
Corollary \ref{cor-smith}, since that corollary implies that the
fixed locus $(\A^1_F)^G$ is smooth and geometrically connected
over $F$.
\end{remark}

Now we consider the case where $n=2$.

\begin{lemma}\label{lemm-indet}
Let $X, W$ be two normal irreducible varieties of dimension $2$
over a field $F$, and let $Y\supset X, \ Z\supset W$ be normal
compactifications. Let $g: X\to W$ be an isomorphism. If the
 irreducible components of $Z\setminus W$ of dimension one (with their reduced
induced structure) are geometrically  integral over $F$, then
every point of indeterminacy $y$ of $g$ on $Y$ is $F$-rational.
Moreover, the rational map $g^{-1}:Z\dashrightarrow Y$ contracts
an irreducible component of $Z\setminus W$ to $y$.
\end{lemma}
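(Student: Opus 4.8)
The plan is to analyze the situation via the resolution of the indeterminacy of $g$ and to use the constraint imposed by the geometric integrality of the boundary components of $Z\setminus W$. First I would pass to a resolution: since $Y$ is a normal surface, for any point of indeterminacy $y$ of $g\colon Y\dashrightarrow Z$ there is a projective birational morphism $\pi\colon \widetilde Y\to Y$, obtained by a sequence of blow-ups of (closed) points lying over $y$, such that the composite $\widetilde g=g\circ \pi\colon \widetilde Y\to Z$ is a morphism. One may take $\pi$ to be the minimal such resolution, and then the fiber $E=\pi^{-1}(y)$ is a connected union of projective curves. The point is that $g$ is defined on the open set $X$, so $\pi$ is an isomorphism over $X$, and the exceptional locus of $\pi$ maps under $\widetilde g$ into the boundary $Z\setminus W$ (a point of $\widetilde Y$ mapping into $W$ would give a point where $g$ is already defined, as $g^{-1}$ is defined on $W$ and $g\circ g^{-1}=\mathrm{id}$; one checks the usual valuative/graph-closure argument that the indeterminacy locus of $g$ maps into the indeterminacy locus of $g^{-1}$, whose image lies in $Z\setminus W$).

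Next I would extract the rationality of $y$. Blowing up a closed point $P$ on a normal surface over $F$ produces an exceptional divisor which is a $\Pro^1$ over the residue field $\kappa(P)$; more precisely, after the first blow-up the exceptional $\Pro^1_{\kappa(y)}$ appears, and $y$ is $F$-rational iff $\kappa(y)=F$. Now $\widetilde g$ maps $E$ into the one-dimensional part of $Z\setminus W$ (it cannot be constant on all of $E$ by minimality/the projection formula, and the image is a proper subvariety of $Z$ of dimension $\le 1$ contained in the boundary). Pick an irreducible component $E_0$ of $E$ that dominates an irreducible component $D$ of $Z\setminus W$ of dimension one under $\widetilde g$. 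By hypothesis $D$ is geometrically integral over $F$; in particular $F$ is algebraically closed in the function field $F(D)$. But $F(D)$ embeds into $F(E_0)$ via $\widetilde g$, and $E_0$ is a curve lying over $y$, so $F(E_0)$ contains $\kappa(y)$. Since $\kappa(y)/F$ is finite and $F$ is algebraically closed in $F(D)\subseteq F(E_0)$... here one must be slightly careful: $\kappa(y)$ need not sit inside $F(D)$. The clean way is: the generic point of $E_0$ maps to the generic point of $D$, so $\kappa(y)\subseteq \kappa(\eta_{E_0})=F(E_0)$ and the structure map $E_0\to D$ is dominant; the residue field $\kappa(y)$ is contained in the field of constants of $E_0$ over $F$ (as $E_0\subseteq E=\pi^{-1}(y)$ is a $\kappa(y)$-scheme), and the composite $\mathrm{Spec}\,F(E_0)\to D$ together with $E_0$ being a $\kappa(y)$-variety forces a homomorphism $\kappa(y)\to$ (integral closure of $F$ in $F(D)$), which is $F$ by geometric integrality. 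Hence $\kappa(y)=F$.

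Then I would finish with the contraction statement. Having shown $y$ is $F$-rational, consider the minimal resolution $\pi\colon \widetilde Y\to Y$ over $y$ and the morphism $\widetilde g\colon \widetilde Y\to Z$. The strict transform on $\widetilde Y$ of a boundary component of $Y\setminus X$ is not contracted by $\widetilde g$ (it maps to a curve in $Z$), while at least one exceptional component $E_0$ of $\pi$ over $y$ maps dominantly onto a one-dimensional component $D$ of $Z\setminus W$; dually, running the argument for $g^{-1}\colon Z\dashrightarrow Y$ and its resolution, the rational map $g^{-1}$ sends the generic point of $D$ to $y$, i.e.\ $g^{-1}$ contracts $D$ to the point $y$. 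Concretely: since $\widetilde g(E_0)=D$ and $\widetilde g$ is proper birational with $\widetilde g^{-1}=\pi\circ(\text{graph projection})$ rational, the inverse image of the generic point of $D$ under $g^{-1}$ is $\pi(E_0)=\{y\}$, which is exactly the assertion that $g^{-1}$ contracts $D$ to $y$.

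The main obstacle, and the step that needs the most care, is the descent of rationality: showing precisely that the residue field $\kappa(y)$ embeds into the field of constants of the component $E_0$ of the exceptional fiber and hence into $\overline{F}\cap F(D)=F$. This hinges on correctly organizing the maps $E_0\to \mathrm{Spec}\,\kappa(y)$ and $E_0\dashrightarrow D$ and invoking that geometric integrality of $D$ means $F$ is separably (here: algebraically, since we get a finite, hence algebraic, extension) closed in $F(D)$. Everything else — resolution of surface indeterminacies by point blow-ups, the fact that indeterminacy of $g$ maps to indeterminacy of $g^{-1}$, and that the boundary is where the exceptional locus goes — is standard surface birational geometry over a general base field and I would treat it briefly.
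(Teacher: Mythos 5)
There is a genuine gap in the crucial step, the descent of $F$-rationality of $y$, and it comes from running the arrows in the wrong direction. You try to deduce $\kappa(y)=F$ from the dominant map $E_0\to D$, where $E_0$ is an exceptional curve lying over $y$; this gives the two inclusions $\kappa(y)\subseteq F(E_0)$ and $F(D)\subseteq F(E_0)$, but these subfields of $F(E_0)$ need not be comparable, and the algebraic closure of $F$ in $F(E_0)$ can be strictly larger than the one in $F(D)$. Concretely, take a nontrivial finite extension $F'/F$ and the finite dominant morphism $\Pro^1_{F'}\to \Pro^1_{F}$: the target is geometrically integral with field of constants $F$, yet the source has field of constants $F'$. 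So geometric integrality of $D$ puts no constraint on the constants of a curve \emph{dominating} $D$, and your claimed homomorphism $\kappa(y)\to(\text{integral closure of }F\text{ in }F(D))$ does not exist. What is needed is a morphism \emph{from} (a dense open of) $D$ \emph{to} $\Spec \kappa(y)$; only then does $\kappa(y)\hookrightarrow F(D)$ combine with geometric integrality to force $\kappa(y)=F$. In other words, the contraction statement is not a corollary you can derive after rationality -- it is the mechanism that produces rationality, and your proof establishes it only at the end, after the flawed step. The paper's proof gets this order right: it works with the closure $\Gamma_g\subset Y\times_F Z$ of the graph, shows via semi-continuity of fiber dimension and Zariski's Main Theorem (using normality of $Y$) that $p^{-1}(y)$ must contain a one-dimensional component if $g$ is undefined at $y$, identifies its image $C$ under $q$ as a component of $Z\setminus W$, and then uses normality of $Z$ to define $g^{-1}$ on a dense open $C^0$ of $C$, which is contracted to $y$; the morphism $C^0\to\Spec\kappa(y)$ then gives $\kappa(y)=F$.

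A secondary problem is your very first step: elimination of indeterminacy by a sequence of point blow-ups, with first exceptional divisor $\Pro^1_{\kappa(y)}$, is a theorem about \emph{regular} surfaces. Here $Y$ is only assumed normal; blowing up a singular closed point of a normal surface need not produce a $\Pro^1$ over the residue field, and iterating point blow-ups does not in general resolve the indeterminacy without first resolving the singularities of $Y$. The graph-closure argument sidesteps this entirely, using normality of $Y$ only through the form of Zariski's Main Theorem in \cite[4.4.9]{ega3.1}.
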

\begin{proof}
 Let $\Gamma_g\subset Y\times_F Z$ be the closure of the graph of
$g$, with its reduced induced structure. Let $p$, resp. $q$ be the
projection of $\Gamma_g$ to $Y$, resp. $Z$. Let $y$ be a closed
point of $Y$, with residue field $\kappa(y)$. If $p^{-1}(y)$ has
dimension zero, then by semi-continuity of fiber dimension
\cite[13.1.4]{ega4.3},
 there
exists an open neighbourhood $U$ of $y$ in $Y$ such that
$$p|_{p^{-1}(U)}: p^{-1}(U) \to U$$ is quasi-finite. Since $Y$ is normal
and $$p|_{p^{-1}(X)}: p^{-1}(X) \to X$$ is an isomorphism, we can
deduce from \cite[4.4.9]{ega3.1} that $p|_{p^{-1}(U)}$ is an
isomorphism, so that $g$ is defined at $y$.

Hence, if $y$ is a point of indeterminacy of $g$, then $p^{-1}(y)$
contains an irreducible component of dimension one. Its image
under $q$ is an irreducible closed subset $C$ of $Z$ of dimension
one. This subset $C$ is disjoint from $W$, because $q^{-1}(W)$ is
contained in $p^{-1}(X)$ (the graph of the isomorphism $g:X\to W$
is finite over $W$, and thus closed in $Y\times_F W$). It follows
that $C$ is an irreducible component of $Z\setminus W$. By our
hypotheses, if we endow $C$ with its reduced induced structure,
then $C$ is geometrically integral over $F$.

 Since $Z$ is normal, the rational map $g^{-1}$ is defined on a
dense open subscheme $C^0$ of $C$. By construction, $g^{-1}$
contracts $C^0$ to the point $y$. Since $C^0$ is geometrically
integral over $F$, the existence of a morphism $C^0\to \Spec
\kappa(y)$ implies that $\kappa(y)=F$ and $y$ is $F$-rational.
\end{proof}

\begin{lemma}\label{lemm-auto}
Let $F$ be any field, and consider an action of  $G$ on
$X=\A^2_F$.   Assume that the fixed locus $X^G$ has dimension one.
Then the following properties hold.

\begin{enumerate}
\item If $D$ is the regular completion of $X^G$, then $D\setminus
X^G$ consists of a unique point $x_{\infty}$, and the residue
field of $x_\infty$ is a purely inseparable extension of $F$.
Moreover, if $F^a$ is an algebraic closure of $F$, then the
normalization $D'$ of $D\times_F F^a$ is isomorphic to
$\Pro^1_{F^a}$.

\item Assume that $G$ is cyclic, and let $g$ be a generator. Let
$Y$ be a smooth compactification of $X$ such that the irreducible
components of $Y\setminus X$ are smooth curves that intersect each
other only at $F$-rational points. Denote by $C$ the closure of
$X^G$ in $Y$, and assume that the birational map
$$g:Y\dashrightarrow Y$$ is defined at the unique point
$c_{\infty}$ of $C\setminus X^G$. Then $X^G$ is isomorphic to
$\A^1_F$.
\end{enumerate}
\end{lemma}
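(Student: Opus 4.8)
I would treat the two parts in turn; the two geometric inputs are Smith theory over $F^a$ (Corollary~\ref{cor-smith}) and the smoothness of fixed loci (Proposition~\ref{prop-smooth}), available because $|G|$ is a power of $\ell$ and $\ell$ is invertible in $F$. For (1): $X^G$ is smooth over $F$ by Proposition~\ref{prop-smooth}, of dimension one by hypothesis, and since $(\,\cdot\,)^G$ commutes with base change, $X^G\times_F F^a=(\A^2_{F^a})^G$ is, by Corollary~\ref{cor-smith}, a smooth connected non-empty curve over $F^a$ with $H^i(-,\Z/\ell)=0$ for $i>0$. Such a curve is $\A^1_{F^a}$: if its smooth completion has genus $g$ and $r\geq1$ points at infinity then $\dim_{\F_\ell}H^1=2g+r-1$, forcing $g=0$, $r=1$. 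Hence $X^G$ is geometrically integral, its regular completion $D$ is defined, and the normalization $D'$ of $D\times_F F^a$ is the smooth projective model of $F^a(t)$, i.e.\ $D'\cong\Pro^1_{F^a}$. As $D'\to D$ is surjective and an isomorphism over $X^G$, it maps the single point $D'\setminus(X^G\times_F F^a)=\Pro^1_{F^a}\setminus\A^1_{F^a}$ onto $D\setminus X^G$, so the latter is a single closed point $x_\infty$; moreover the fibre of $D\times_F F^a\to D$ over $x_\infty$ is a single point, so $\kappa(x_\infty)\otimes_F F^a$ is local and $\kappa(x_\infty)/F$ is purely inseparable. This proves (1).

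For (2), keep this notation and let $\nu\colon D\to C$ be the normalization of $C:=\overline{X^G}\subseteq Y$ (this is the regular completion of $X^G$, regular being normal for curves); then $C\setminus X^G=\{c_\infty\}$ with $c_\infty=\nu(x_\infty)$ and $\kappa(c_\infty)\subseteq\kappa(x_\infty)$. I claim it is enough to show $\kappa(x_\infty)=F$. Granting this, the discrete valuation ring $\mathcal{O}_{D,x_\infty}$ has residue field $F$, so $D$ is smooth over $F$ (it is $X^G$ away from $x_\infty$, and a regular local ring with residue field equal to the ground field is smooth); being proper and geometrically connected with $D\times_F F^a$ smooth, hence normal, hence equal to its normalization $\Pro^1_{F^a}$, the curve $D$ is a smooth conic over $F$ carrying the rational point $x_\infty$, so $D\cong\Pro^1_F$ and $X^G=D\setminus\{x_\infty\}\cong\A^1_F$.

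It remains to prove $\kappa(x_\infty)=F$. The generator $g$ fixes $X^G$ pointwise, so the induced birational self-map of $C$ is the identity; since $g$ is defined at $c_\infty$, the morphism representing it near $c_\infty$ restricts to the identity on $C$ there, so $g(c_\infty)=c_\infty$, and by composition every power of $g$ is defined at $c_\infty$ and fixes it. Hence $G$ acts on $A:=\mathcal{O}_{Y,c_\infty}$ by $F$-algebra automorphisms, trivially on the quotient $\mathcal{O}_{C,c_\infty}=A/\mathfrak{q}$ (with $\mathfrak{q}$ the height one prime of $C$). Replacing the coordinate ring of an affine neighbourhood of $c_\infty$ by the $F$-subalgebra it generates together with its $G$-translates, and then passing to the $F$-smooth locus, one gets a smooth affine $F$-variety $W$ with a $G$-action and a $G$-fixed point $\bar c$ for which $\mathcal{O}_{W,\bar c}\cong A$ equivariantly. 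By Proposition~\ref{prop-smooth}, $W^G$ is smooth over $F$, and $\mathcal{O}_{W^G,\bar c}=A/I$ with $I\subseteq\mathfrak{q}$. If $\dim_{\bar c}W^G=2$ then $W^G$ contains the component of $W$ through $\bar c$, forcing $g$ to act trivially on $A$, hence on $F(Y)$ and on $\A^2_F$, contradicting $\dim X^G=1$; so $W^G$ is a smooth curve near $\bar c$, and the surjection $A/I\twoheadrightarrow A/\mathfrak{q}$ of one-dimensional local rings with regular source (hence with zero kernel) is an isomorphism. Therefore $\mathcal{O}_{C,c_\infty}$ is smooth over $F$, $C$ is regular at $c_\infty$ so $\kappa(c_\infty)=\kappa(x_\infty)$, and $\kappa(c_\infty)/F$ is separable; being also purely inseparable by (1), we conclude $\kappa(x_\infty)=F$.

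The main obstacle will be this last step: converting the hypothesis that $g$ is merely \emph{defined} at $c_\infty$ into a genuine finite $G$-action on a smooth affine $F$-variety so that Proposition~\ref{prop-smooth} applies, and then extracting separability of $\kappa(c_\infty)$ from the resulting smoothness of $C$ at $c_\infty$; the rest is Corollary~\ref{cor-smith}, Proposition~\ref{prop-smooth}, and standard facts about curves and their regular completions. The hypothesis that the boundary components of $Y$ are smooth and meet only at $F$-rational points is presumably what one uses, in conjunction with Lemma~\ref{lemm-indet}, to ensure in practice that a compactification $Y$ with $g$ defined at $c_\infty$ exists.
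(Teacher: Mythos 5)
Part (1) of your argument is correct and is essentially the paper's proof (the paper phrases the genus/boundary count via $\chi_c$, you via $\dim H^1=2g+r-1$; same content). In part (2), your reduction to showing $\kappa(x_\infty)=F$, and your construction of a $G$-stable smooth neighbourhood of $c_\infty$ on which Proposition \ref{prop-smooth} yields that $C$ is smooth over $F$ at $c_\infty$ (hence $C\cong D$ there), also track the paper's proof, which extends the $G$-action to the open set $V=\bigcap_h\{x\in U_h\mid h(x)\in U_h\}$ and identifies $C$ with a connected component of the smooth scheme $V^G$.

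The gap is the very last inference: ``Therefore $\mathcal{O}_{C,c_\infty}$ is smooth over $F$ \dots\ and $\kappa(c_\infty)/F$ is separable.'' Smoothness of a curve over $F$ at a closed point does \emph{not} imply that the residue field of that point is separable over $F$: the point $(x^p-t)$ on $\A^1_{\F_p(t)}$ is a smooth point with purely inseparable residue field $\F_p(t^{1/p})$. So from what you have proved one can only conclude that $D\cong C$ is a smooth conic with a purely inseparable closed point $x_\infty$, and a priori $X^G$ could still be a nontrivial form $\Pro^1_F\setminus\{x_\infty\}$ of $\A^1_F$. The separability of $\kappa(c_\infty)$ requires a separate argument, and this is exactly where the hypothesis on $Y\setminus X$ enters --- the hypothesis you set aside as ``presumably only needed to ensure $Y$ exists.'' The paper argues: if $c_\infty$ lies on at least two irreducible components of $Y\setminus X$, it is $F$-rational by that hypothesis; if it lies on exactly one component $E$ (smooth by hypothesis), then $G$ acts on $V\cap E$ and $c_\infty$ is an \emph{isolated} fixed point of this one-dimensional action (since $C\cap E$ is finite), so by Proposition \ref{prop-smooth} applied to the action on the curve $V\cap E$, the zero-dimensional scheme $\Spec\kappa(c_\infty)$ is open in a scheme smooth over $F$, hence \'etale over $F$, i.e.\ $\kappa(c_\infty)/F$ is separable. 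Combined with pure inseparability from (1) this gives $\kappa(c_\infty)=F$. You need to add this second application of Proposition \ref{prop-smooth} (to the boundary component, not just to the surface) to close the argument.
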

\begin{proof}
(1) We denote by $F^a$ an algebraic closure of $F$. By Corollary
\ref{cor-smith}, we know that $X^G$ is smooth and geometrically
connected, and that
$$H^1(X^G\times_F F^a,\Z/\ell)=0.$$  Hence, $X^G$ is a smooth geometrically
connected affine
 curve and $\chi_c(X^G)=1$.

 The normalization morphism $n:D'\to D\times_F F^a$ is an isomorphism over the
smooth open subscheme $X^G\times_F F^a$ of $D\times_F F^a$. We have
 $$1<\chi_c(X^G)+\chi_c(D'\setminus n^{-1}(X^G\times_F F^a))=\chi_c(D')\leq 2.$$
 It follows that $D'$ is isomorphic to $\Pro^1_{F^a}$  and that
 $D'\setminus n^{-1}(X^G\times_F F^{a})$ consists of a unique point. This implies that
$D\setminus X^G$ consists of a unique
  closed point $x_{\infty}$, which
 is 
  purely inseparable over $F$.

(2) By (1), it is enough to show that $C$ is smooth over $F$ and
 that $c_{\infty}$ is separable over $F$: then $c_{\infty}$ is
 $F$-rational and $C\cong D$ is isomorphic to $\Pro^1_F$.

 For every $h\in G$, we denote by
$U_h$ the domain of definition of the birational map
$$h:Y\dashrightarrow Y$$  and
we set
$$U=\bigcap_{h\in G}U_h,\quad V=\bigcap_{h\in G}\{x\in U\,|\,h(x)\in U\}.$$
Then $V$ is an open subset of $Y$ that contains $X\cup
\{c_{\infty}\}$, and the $G$-action on $X$ extends uniquely to a
$G$-action on $V$, which stabilizes $V\setminus X$. By definition,
the fixed locus $V^G$ contains $C$. By Proposition
\ref{prop-smooth}, $V^G$ is smooth over $F$.  This implies that
$C$ is a connected component of $V^G$, which is smooth over $F$.

If $c_{\infty}$ lies on more than one irreducible component of
$Y\setminus X$, then by assumption, $c_{\infty}$ is $F$-rational.
If $c_{\infty}$ lies on precisely one irreducible component $E$ of
$Y\setminus X$, then $G$ acts on $V\cap E$, and $c_{\infty}$ is an
isolated fixed point of this action. Applying Proposition
\ref{prop-smooth} to the $G$-action on $V\cap E$, we see that
$c_\infty$ is smooth, thus separable over $F$.
\end{proof}

\begin{theorem}\label{thm-dim2}
Let $F$ be any field. For any $G$-action on $X=\A^2_F$, the fixed
locus $X^G$ is isomorphic to $\A^m_F$, with $m\in \{0,1,2\}$. In
particular, $X^G$ has an $F$-rational point.
\end{theorem}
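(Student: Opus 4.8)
The plan is to do a case analysis on $m=\dim X^G$, which by Proposition \ref{prop-smooth} (applied after passing to a suitable open $G$-stable neighbourhood in a compactification, as in the proof of Lemma \ref{lemm-auto}(2)) takes values in $\{0,1,2\}$. The case $m=2$ is trivial: then $X^G$ is a smooth closed subvariety of $\A^2_F$ of full dimension, hence $X^G=X=\A^2_F$ and it is already of the desired form. The case $m=0$ follows from Proposition \ref{prop-dim1}-type reasoning together with Corollary \ref{cor-smith}: the latter tells us $X^G$ is smooth, connected and non-empty with $H^i(X^G\times_F F^a,\Z/\ell)=0$ for $i>0$, so $X^G$ is a zero-dimensional smooth connected $F$-scheme, forcing $X^G\cong \Spec F=\A^0_F$. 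The bulk of the work is the case $m=1$, where the goal is to show $X^G\cong\A^1_F$, and this is where Lemma \ref{lemm-auto} enters.

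For $m=1$, I would first reduce to the case $G$ cyclic. Since $G$ is a finite $\ell$-group, it is solvable and has a composition series $1=G_0\triangleleft G_1\triangleleft\cdots\triangleleft G_r=G$ with successive quotients of order $\ell$. Using $X^G=(X^{G_{r-1}})^{G/G_{r-1}}$ and induction on $|G|$, one is reduced to understanding how a $\Z/\ell$-action behaves; more precisely, if $H\triangleleft G$ is chosen so that $G/H\cong\Z/\ell$, then $X^H$ is (by the induction hypothesis, applied once the dimension is known) isomorphic to $\A^{m'}_F$ with $m'\in\{1,2\}$, and one is left to analyze the induced $\Z/\ell$-action on $\A^1_F$ (handled by Proposition \ref{prop-dim1}) or on $\A^2_F$ (the genuinely new case, with cyclic $G$). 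So it suffices to treat $G=\langle g\rangle$ cyclic with the fixed locus of dimension one.

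Now invoke Lemma \ref{lemm-auto}. By part (1), the regular completion $D$ of $X^G$ has $D\setminus X^G$ a single point $x_\infty$ purely inseparable over $F$, with normalized base change $\cong\Pro^1_{F^a}$; to conclude $X^G\cong\A^1_F$ via part (2), I must produce a smooth compactification $Y$ of $X=\A^2_F$ whose boundary $Y\setminus X$ is a union of smooth curves meeting only at $F$-rational points, and such that the birational self-map $g:Y\dashrightarrow Y$ is defined at the unique boundary point $c_\infty$ of the closure $C$ of $X^G$. The natural choice is $Y=\Pro^2_F$, whose boundary is a single line $\Pro^1_F$ (no intersection points to worry about, and it is certainly $F$-rational). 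The main obstacle — and the step I expect to require the most care — is verifying that $g$ extends across $c_\infty$, i.e. that $c_\infty$ is not a point of indeterminacy of the birational automorphism $g$ of $\Pro^2_F$. Here I would apply Lemma \ref{lemm-indet}: if $c_\infty$ were a point of indeterminacy, then $g^{-1}$ would have to contract an irreducible component of the boundary $\Pro^2_F\setminus\A^2_F=\Pro^1_F$ onto $c_\infty$; but that component equals the whole line at infinity, which is $g^{-1}$-invariant as a set (since $g$ restricts to an automorphism of $\A^2_F$, it permutes the boundary components of any fixed compactification, and there is only one), so it cannot be contracted to a point. This contradiction shows $g$ is defined at $c_\infty$, Lemma \ref{lemm-auto}(2) applies, and $X^G\cong\A^1_F$, completing the proof.
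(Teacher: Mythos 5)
Your reduction to cyclic $G$ of order $\ell$, and your treatment of the cases $m=0$ and $m=2$, are fine and agree with the paper. The problem is the heart of the case $m=1$. You claim that the birational extension $g:\Pro^2_F\dashrightarrow \Pro^2_F$ of the automorphism $g$ of $\A^2_F$ must be defined at the boundary point $c_\infty$ of the closure $C$ of $X^G$, arguing via Lemma \ref{lemm-indet} that otherwise $g^{-1}$ would contract the line at infinity, which you assert is impossible because that line is ``$g^{-1}$-invariant as a set, since $g$ permutes the boundary components of any fixed compactification.'' This is false: a birational self-map of a compactification induced by an automorphism of the open part need not permute boundary components; it can contract them. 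Already $g(x,y)=(x,\,-y+x^2)$, an involution with fixed locus the parabola $2y=x^2\cong\A^1_F$, extends to $(X:Y:Z)\mapsto (XZ:-YZ+X^2:Z^2)$, which contracts the line $Z=0$ to $(0:1:0)$ and is undefined precisely at $c_\infty=(0:1:0)$, the point at infinity of the fixed conic. So the one step you flag as ``requiring the most care'' is exactly where the argument breaks, and the conclusion you want is not merely unproven but wrong as stated.

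The paper's proof embraces this failure rather than ruling it out: when $g$ (or $g^{-1}$) is undefined at $c_0$, Lemma \ref{lemm-indet} is used only to conclude that $c_0$ is $F$-rational, so one may blow it up and pass to $Y_1$, and one iterates, producing a chain of compactifications $Y_i$ of $\A^2_F$ whose boundaries are chains of copies of $\Pro^1_F$ meeting at $F$-rational points. Termination comes from resolution of indeterminacies of $g$ together with a ping-pong argument: if $g^{-1}$ is never defined at any $c_i$, then by Lemma \ref{lemm-indet} the resolved map $g\circ h$ contracts a boundary component to each $c_i$ in turn, hence by the universal property of blow-ups \cite[9.2.1]{liu} it factors through all the $h_i$, yielding a morphism $g_N:U\to Y_N$ defined at $c_N$; in either outcome Lemma \ref{lemm-auto}(2) applies to some $Y_i$ and gives $X^G\cong\A^1_F$. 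To repair your proposal you would need to replace your invariance claim with this (or an equivalent) mechanism for dealing with genuine indeterminacy of $g$ at $c_\infty$.
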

\begin{proof}
By Proposition \ref{prop-dim1} and an induction argument as in the
proof of Proposition \ref{prop-dim1}, we may assume that
$|G|=\ell$.
 We may also assume that
$X^G$ has dimension one, since the case of dimension two is
obvious, and the case of dimension zero follows from Theorem
\ref{thm-smith}. We now prove the theorem under these assumptions.

Let $g$ be a generator of $G$. It defines an automorphism  of
$X=\A^2_F$ and, thus, a birational map
$$g: \mathbb{P}^2_F \dashrightarrow \mathbb{P}^2_F.$$ Denote by $C_0$ the
closure of $X^G$ in $\mathbb{P}^2_F$. By Lemma \ref{lemm-auto},
the boundary $C_0\setminus X^G$ consists of a unique point $c_0$.

If $g$ is defined at $c_0$, then Lemma \ref{lemm-auto} implies
that $X^G=\A^1_F$.  Hence, we may assume that $g$ is not
 defined at $c_0$. Then $c_0$ is $F$-rational by
Lemma \ref{lemm-indet}, applied to $W=X=\A^2_F$ and
$Y=Z=\Pro^2_F$.

Let $h_1:Y_1\to \mathbb{P}^2_F$ be the blow-up
 at $c_0$, and consider the birational map
$$g\circ h_1:Y_1\dashrightarrow \mathbb{P}^2_F.$$ We denote by 
 $C_1$ the
closure of $X^G$ in $Y_1$, and by $c_1$ the unique point of
$C_1\setminus X^G$. If $g\circ h_1$ is not defined at $c_1$, then
$c_1$ is $F$-rational by Lemma \ref{lemm-indet}, and we consider
the blow-up
$$h_2:Y_2\to Y_1$$
at $c_1$.

 If we continue in this way, then by resolution of indeterminacies \cite[9.2.7]{liu}, we will eventually construct a sequence
 $(Y_i,C_i,c_i)$ with $i=0,\ldots,N$ and morphisms $h_i:Y_i\to Y_{i-1}$ for $i=1,\ldots,N$ that satisfy the following
 properties:
\begin{itemize}
\item $Y_0=\mathbb{P}^2_F$, $Y_i$ is a smooth compactification of
$\A^2_F$ for every $i$, and $Y_i\setminus \A^2_F$ is a chain of
$(i+1)$ copies of $\mathbb{P}^1_F$ that intersect at $F$-rational
points, \item $C_i$ is the closure of $X^G$ in $Y_i$ and $c_i$ is
the unique point of $C_i\setminus X^G$,  \item the point $c_i$ is
$F$-rational for every $i<N$,
 \item $h_i:Y_i\to Y_{i-1}$ is the blow-up at $c_{i-1}$, for
 every $i\geq 1$,
 \item if we put $h=h_1\circ h_{2}\circ \ldots\circ h_{N}$, then the birational map $$g\circ h:Y_N\dashrightarrow \mathbb{P}^2_F$$
 is defined at $c_{N}$. We denote by $U$ the domain of definition
 of $g\circ h$.
\end{itemize}

For every $i$ in $\{0,\ldots,N\}$, the action of $g$ on $\A^2_F$
defines a birational map
$$g_i:Y_i\dashrightarrow Y_i.$$ We claim that, for some $i$,
either $g_i$ or $g_i^{-1}$ will be defined at $c_i$. This suffices
to prove the theorem, by Lemma \ref{lemm-auto}.

So let us prove the claim. Assume that $g_0^{-1}=g^{-1}$ is not
defined at $c_0$. Then, a fortiori, the rational map
$$(g\circ h)^{-1}:\mathbb{P}^2_F\dashrightarrow Y_N$$ is not
defined at $c_0$. By Lemma \ref{lemm-indet}, this means that
$g\circ h$ contracts an irreducible component of $Y_N\setminus
\A^2_F$ to $c_0$. By \cite[9.2.1]{liu}, the morphism $g\circ
h:U\to \mathbb{P}^2_F$ factors through the blow-up $h_1:Y_1\to
\mathbb{P}^2_F$ at $c_0$ (to be precise, in \cite[9.2.1]{liu} it
is assumed that $U$ is proper over $F$, but this is not necessary:
one can apply \cite[9.2.1]{liu} to a compactification of the
morphism $g\circ h:U\to \mathbb{P}^2_F$ with regular domain).

Proceeding inductively, we see that either $g_i^{-1}$ is defined
at $c_i$ for some $i>0$, or $g\circ h$ factors through a morphism
$g_N:U\to Y_N$ so that $g_N$ is defined at $c_N$.
\end{proof}
\begin{remark}
One can push the proof a bit further: if $g_i^{-1}$ is defined at
$c_i$ for some $i$, then the proof of Lemma \ref{lemm-auto} shows
that the whole $G$-action is defined on an open neighbourhood of
$C_i$ in $Y_i$. Since blowing up $G$-fixed closed points is
$G$-equivariant, we see that in all cases, the $G$-action is
defined on an open neighbourhood of $C_N$ in $Y_N$.
\end{remark}

 \begin{cor}\label{cor-dim2} Let $F$ be any field, and let
$H$ be a finite solvable group whose order is not divisible by the
characteristic of $F$. For any $H$-action on $X=\A^2_F$, the fixed
locus $X^H$ is isomorphic to $\A^m_F$, with $m\in \{0,1,2\}$. In
particular, $X^H$ has an $F$-rational point.
\end{cor}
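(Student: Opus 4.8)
The plan is to reduce Corollary \ref{cor-dim2} to Theorem \ref{thm-dim2} by exploiting the solvability of $H$ and an induction on $|H|$, splitting off the prime-to-$\ell$ part and the $\ell$-part of $H$ one prime at a time. More precisely, write $|H|=\ell^a m$ with $\gcd(\ell,m)=1$ in the case where $\ell$ is the prime we are currently handling; but since $H$ is an arbitrary finite solvable group (not an $\ell$-group), the cleaner approach is a direct induction on $|H|$ using a non-trivial normal subgroup. First I would dispose of the trivial cases: if the $H$-action on $\A^2_F$ is trivial then $X^H=\A^2_F$ and $m=2$; and if $|H|=1$ there is nothing to prove. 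So assume $H$ acts non-trivially and $|H|>1$.

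Next, since $H$ is solvable and non-trivial, it contains a normal subgroup $N\trianglelefteq H$ of prime index, say $[H:N]=\ell$ for some prime $\ell$ (necessarily invertible in $F$, since $|H|$ is prime to $\operatorname{char}F$). By the induction hypothesis applied to the $N$-action on $X=\A^2_F$ — note $|N|<|H|$, $N$ is solvable as a subgroup of a solvable group, and $|N|$ is still prime to $\operatorname{char}F$ — the fixed locus $X^N$ is isomorphic to $\A^{m'}_F$ for some $m'\in\{0,1,2\}$. Now $H/N$ is cyclic of prime order $\ell$, and since $(\cdot)^G$ commutes with composition of group quotients (as in the identity $X^G=(X^H)^{G/H}$ used in the proof of Proposition \ref{prop-euler}), we have $X^H=(X^N)^{H/N}$. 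Thus $H/N\cong \Z/\ell$ acts on $X^N\cong \A^{m'}_F$, and I must show $(X^N)^{H/N}$ is an affine space over $F$.

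The case $m'=0$ is immediate: $X^N$ is a point, hence so is its fixed locus (and the fixed locus is non-empty by, e.g., Theorem \ref{thm-smith} applied with the trivial action, or simply because $X^N$ itself is a single $F$-rational point). If $m'=1$, then $H/N$ acts on $\A^1_F$; by Proposition \ref{prop-dim1}, either the action is trivial, in which case $(X^N)^{H/N}=\A^1_F$, or it is non-trivial, in which case the fixed locus is a single $F$-rational point, i.e. $\A^0_F$. If $m'=2$, then $X^N\cong \A^2_F$ carries a $\Z/\ell$-action, and Theorem \ref{thm-dim2} directly gives $(X^N)^{H/N}\cong \A^{m}_F$ with $m\in\{0,1,2\}$. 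In every case $X^H$ is isomorphic to some $\A^m_F$ with $m\in\{0,1,2\}$, and in particular $X^H(F)$ is non-empty since $\A^m_F$ always has an $F$-rational point. This completes the induction.

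The only genuinely delicate point is the identification $X^H=(X^N)^{H/N}$ together with the fact that an abstract isomorphism $X^N\cong \A^{m'}_F$ transports the residual $H/N$-action to a bona fide algebraic action of $H/N$ on $\A^{m'}_F$ to which the lower-dimensional results apply; this is formal from the functoriality of $(\cdot)^G$ recorded in Section \ref{sec-notation} (the adjunction $(\Sch/S)\to(\GSch/S)$ and the compatibility of fixed loci with the iterated quotient), so there is no real obstacle — the substance of the corollary is entirely contained in Theorem \ref{thm-dim2}, and the argument above is just the bookkeeping that feeds solvable groups into that theorem one prime layer at a time.
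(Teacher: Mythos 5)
Your proof is correct and takes essentially the same route as the paper: the paper's one-line proof likewise reduces to Proposition \ref{prop-dim1} and Theorem \ref{thm-dim2} via a composition series of $H$ with cyclic factors of prime order invertible in $F$, which is exactly your induction through a normal subgroup of prime index. The only difference is that you spell out the bookkeeping (the identity $X^H=(X^N)^{H/N}$ and the case analysis on $m'$) that the paper leaves implicit.
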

\begin{proof}
This follows immediately from Proposition \ref{prop-dim1}, Theorem
\ref{thm-dim2} and the fact that $H$ has a composition series
whose composition factors are cyclic groups with order a prime
number different from the characteristic of $F$.
\end{proof}

\subsection{The case $F=\R$} \label{sub-r} Here we can use an argument
similar
to Section \ref{subsubsec-euler} to prove the following result.

\begin{prop}
Let $X$ be an $\R$-variety with good $G$-action. We consider
$X(\R)$ with its real topology. Then one has the following
congruence of singular Euler characteristics with compact
supports:
$$\chi_{c,\sing}(X(\R))\equiv \chi_{c,\sing}(X(\R)^G)\mod \ell.$$
In particular, if $\ell$ does not divide $\chi_{c,\sing}(X(\R))$,
then $X(\R)^G$ is non-empty.
\end{prop}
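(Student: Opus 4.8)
The plan is to transcribe, step for step, the proof of Proposition \ref{prop-euler}, with $\ell$-adic étale cohomology of $X$ replaced by singular cohomology with compact supports of the topological space $X(\R)$, and the étale quotient $Y\to Y/G$ replaced by a topological quotient. Here $X(\R)$, and every subspace of it we shall consider, is a semialgebraic space, hence a locally compact triangulable space — it is the set of real points of a separated $\R$-scheme of finite type — so $\chi_{c,\sing}$ is defined and finite on it and satisfies the scissor relation $\chi_{c,\sing}(Z)=\chi_{c,\sing}(A)+\chi_{c,\sing}(Z\setminus A)$ for every closed $A\subseteq Z$. The extra ingredient we need is the topological counterpart of the identity $\chi_c(Y)=\ell\cdot\chi_c(Y/G)$ used in the étale case, namely: if a finite group $\Gamma$ of order $m$ acts freely by homeomorphisms on such a space $Z$, then $m\mid\chi_{c,\sing}(Z)$. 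Indeed, a free action of a finite group on a Hausdorff space is properly discontinuous, so $Z\to Z/\Gamma$ is an $m$-sheeted covering of locally compact triangulable spaces; choosing a finite $\Gamma$-invariant triangulation of $Z$ for which the action is simplicial without inversions (possible after barycentric subdivision) and computing $\chi_{c,\sing}(Z)$ as the signed count of its open simplices, one observes that each $\Gamma$-orbit of open simplices has exactly $m$ elements, whence $\chi_{c,\sing}(Z)=m\cdot\chi_{c,\sing}(Z/\Gamma)$.

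Before the induction I would record a few elementary facts. Since $X$ is separated, $X^G\to X$ is a closed immersion (\cite{edixhoven}), so $X(\R)^G=X^G(\R)$ is a closed $G$-stable subspace of $X(\R)$; likewise for the fixed locus of any subgroup. Moreover $X^G=(X^H)^{G/H}$ for any normal subgroup $H\trianglelefteq G$, the induced action of $G/H$ on $X^H$ is again good, and replacing $X^H$ by its underlying reduced subscheme changes neither the space of real points nor its $\chi_{c,\sing}$; hence $X(\R)^H$ is the space of real points of an $\R$-variety with good $G/H$-action, to which the statement applies.

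Now I would induct on $|G|$. Base case $|G|=\ell$: then $G$ is cyclic of prime order, so any nontrivial element generates $G$, and a real point of $X$ fixed by any nontrivial element of $G$ must lie in $X(\R)^G$. Thus $G$ acts freely on the open subspace $Y(\R):=X(\R)\setminus X(\R)^G$, so $\ell\mid\chi_{c,\sing}(Y(\R))$ by the divisibility fact above, and the scissor relation gives
$$\chi_{c,\sing}(X(\R))=\chi_{c,\sing}(X(\R)^G)+\chi_{c,\sing}(Y(\R))\equiv\chi_{c,\sing}(X(\R)^G)\bmod\ell.$$
Inductive step $|G|>\ell$: since a finite $\ell$-group is solvable, $G$ has a nontrivial proper normal subgroup $H$; applying the induction hypothesis first to the $H$-action on $X$ and then to the $G/H$-action on the reduced subscheme underlying $X^H$ gives
$$\chi_{c,\sing}(X(\R))\equiv\chi_{c,\sing}(X(\R)^H)=\chi_{c,\sing}((X(\R)^H)^{G/H})\equiv\chi_{c,\sing}(X(\R)^G)\bmod\ell.$$
This proves the congruence. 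If moreover $X(\R)^G$ were empty, then $\chi_{c,\sing}(X(\R)^G)=0$ and the congruence would force $\ell\mid\chi_{c,\sing}(X(\R))$; hence $\ell\nmid\chi_{c,\sing}(X(\R))$ implies $X(\R)^G\neq\emptyset$.

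The single delicate point, and hence the main obstacle, is the divisibility fact for $\chi_{c,\sing}$ under a free finite group action: one must ensure that the quotient space (equivalently, the invariant triangulation) stays within a category of spaces on which $\chi_{c,\sing}$ is defined and additive. This is guaranteed by working in the semialgebraic category — where finite (equivariant) semialgebraic triangulations and quotients by finite group actions are available — or by appealing directly to the known multiplicativity of the compactly supported Euler characteristic along finite-sheeted coverings of locally compact triangulable spaces. With that in hand, the argument is a verbatim transcription of the proof of Proposition \ref{prop-euler}.
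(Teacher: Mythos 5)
Your proposal is correct and follows essentially the same route as the paper: the authors likewise say the proof of Proposition \ref{prop-euler} can simply be copied, using that for $|G|=\ell$ the projection $Y(\R)\to Y(\R)/G$ is a topological covering of degree $\ell$, and then the same solvability induction. Your extra care in justifying the multiplicativity of $\chi_{c,\sing}$ along finite coverings via equivariant (semialgebraic) triangulations is a detail the paper leaves implicit, but it is the same argument.
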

\begin{proof}
We can simply copy the proof of Proposition \ref{prop-euler},
using the fact that, if $|G|=\ell$ and $Y=X\setminus X^G$, the
projection
$$Y(\R)\to Y(\R)/G$$ is a topological covering of degree $\ell$.
\end{proof}

\begin{cor}
Any $G$-action on $\A^n_{\R}$, with $n$ a non-zero natural number, admits an
$\R$-rational fixed point.
\end{cor}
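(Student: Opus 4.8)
The plan is to deduce this corollary directly from the preceding proposition on the multiplicativity of the singular Euler characteristic with compact supports in tame coverings over $\R$. Concretely, I would apply that proposition with $X=\A^n_{\R}$ and observe that $X(\R)=\R^n$ is contractible, so its compactly supported singular cohomology is that of a point in the top degree only after Poincar\'e duality: more precisely $H^i_{c,\sing}(\R^n)$ is $\Q$ for $i=n$ and $0$ otherwise, hence $\chi_{c,\sing}(\R^n)=(-1)^n$. In particular $\chi_{c,\sing}(\A^n_{\R}(\R))=\pm 1$ is not divisible by $\ell$ (recall $\ell$ is a prime, so $\ell\geq 2$).

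The proposition then yields
\[
\chi_{c,\sing}(\A^n_{\R}(\R)^G)\equiv \chi_{c,\sing}(\A^n_{\R}(\R))\equiv (-1)^n \not\equiv 0 \pmod{\ell}.
\]
In particular $\chi_{c,\sing}(\A^n_{\R}(\R)^G)\neq 0$, so the set $\A^n_{\R}(\R)^G$ is non-empty. It remains to note that $\A^n_{\R}(\R)^G=(\A^n_{\R})^G(\R)$, since $(\cdot)^G$ represents the fixed-point functor and taking $\R$-points commutes with it; a point of $\A^n_{\R}(\R)$ fixed by $G$ is exactly an $\R$-rational point of the fixed locus $(\A^n_{\R})^G$. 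Hence the $G$-action on $\A^n_{\R}$ admits an $\R$-rational fixed point.

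The only mild subtlety — and the one step worth spelling out — is checking that the $G$-action on $\A^n_{\R}$ is good, so that the proposition applies: but $\A^n_{\R}$ is affine (indeed quasi-projective over the affine base $\Spec\R$), so every orbit is trivially contained in an affine open, as recorded in Section~\ref{sec-notation}. I do not expect any real obstacle here; the corollary is essentially a one-line consequence of the preceding proposition together with the elementary topological computation $\chi_{c,\sing}(\R^n)=(-1)^n$.

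\begin{proof}
The affine space $\A^n_{\R}$ is affine, hence the $G$-action on it is good. The space $\A^n_{\R}(\R)=\R^n$ is contractible, and by Poincar\'e duality $H^i_{c,\sing}(\R^n,\Q)$ vanishes for $i\neq n$ and is one-dimensional for $i=n$, so $\chi_{c,\sing}(\A^n_{\R}(\R))=(-1)^n$. Since $\ell$ is a prime it does not divide $(-1)^n$, so by the previous Proposition $X(\R)^G$ is non-empty. Finally $X(\R)^G=(\A^n_{\R})^G(\R)$ because the fixed locus $(\cdot)^G$ represents the fixed-point functor and commutes with the base change to $\R$-points; thus the $G$-action on $\A^n_{\R}$ has an $\R$-rational fixed point.
\end{proof}
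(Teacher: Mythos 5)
Your proof is correct and is exactly the argument the paper intends: the corollary is stated without proof as an immediate consequence of the preceding proposition, via $\chi_{c,\sing}(\R^n)=(-1)^n$ not divisible by the prime $\ell$ and the identification $X(\R)^G=X^G(\R)$ coming from the functor represented by the fixed locus. Your extra remarks on goodness of the action and on the compactly supported cohomology of $\R^n$ are accurate and harmless.
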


\subsection{The case where $F$ is a henselian discretely valued
field, of residual characteristic $\neq \ell$} \label{sub-hens}
Recall that the motivic Serre invariant $S(\cdot)$ has been
defined in Definitions \ref{def-LoSe} and \ref{def-char0} and that
the rational volume $s(\cdot)$ has been defined in Definition
\ref{def-ratl-vol}.

\begin{theorem}\label{thm-rat}Let $\ell$ be a prime number and $G$ a
finite $\ell$-group. Let $K$ be a henselian discretely valued
field whose residue field $k$ is perfect.  Let $X$ be a
$K$-variety with good $G$-action. Assume either that  $K$ has
characteristic zero, or that $X$ is smooth and proper over $K$.

\begin{enumerate}
\item Assume that the characteristic of $k$ is different from
$\ell$. Then one has
\begin{equation*}\label{eq-algcl}
s(X)\equiv s(X^G)\mod \ell.\end{equation*} In particular, if
$\ell$ does not divide $s(X)$, then $X^G(K^{sh})$ is non-empty.

\item Assume that $k$ is finite, of cardinality $q$. Then one has
\begin{equation*}\label{eq-finite}\sharp S(X)\equiv \sharp S(X^G)\mod
\gcd(\ell,q-1).\end{equation*}
 In
particular, if the residue class of $\sharp S(X)$ in
$\Z/(\ell,q-1)$ is non-zero
  (e.g. if $\ell$ divides $q-1$ and $\sharp S(X)=1$),
then $X^G(K)$ is non-empty.
\end{enumerate}
\end{theorem}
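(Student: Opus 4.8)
The plan is to reduce everything to the smooth and proper case, where Proposition \ref{prop-weakG} and Proposition \ref{prop-Gfix} apply, and then to run the $G$-orbit counting argument of Proposition \ref{prop-finfield}/Proposition \ref{prop-euler} on the special fibers of the relevant weak N\'eron models. Concretely, I would first treat the case where $X$ is smooth and proper. By Proposition \ref{prop-weakG}, $X$ admits a weak N\'eron $G$-model $\mathcal{X}$ over $R$; since $|G|$ is a power of $\ell$ and $p\neq \ell$ (in case (1), because $\mathrm{char}\,k\neq\ell$; in case (2), $p$ is the residual characteristic and $\ell\mid q-1$ forces $p\neq\ell$), Proposition \ref{prop-smooth} shows $\mathcal{X}^G$ is smooth over $R$, and Proposition \ref{prop-Gfix} shows $\mathcal{X}^G$ is a weak N\'eron model for $X^G$. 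Thus $S(X)=[\mathcal{X}_k]$ and $S(X^G)=[\mathcal{X}^G_k]$ in $\KR{k}/(\LL_k-1)$.

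Now the group $G$ acts on the $k$-variety $\mathcal{X}_k$ with fixed locus $\mathcal{X}^G_k$ (the formation of the fixed locus commutes with the base change $R\to k$). For case (1), apply $\chi_c$: by Proposition \ref{prop-euler}, $\chi_c(\mathcal{X}_k)\equiv \chi_c(\mathcal{X}^G_k)\bmod\ell$, i.e.\ $s(X)\equiv s(X^G)\bmod\ell$. For case (2), where $k=\F_q$ is finite, apply the point-counting realization $\sharp$: by Proposition \ref{prop-finfield}, $|\mathcal{X}_k(k)|\equiv |\mathcal{X}^G_k(k)|\bmod\ell$, and trivially $|\mathcal{X}_k(k)|\equiv |\mathcal{X}^G_k(k)|\bmod(q-1)$ since both reductions are the images under $\sharp$ of the respective classes; combining, $\sharp S(X)\equiv \sharp S(X^G)\bmod\gcd(\ell,q-1)$. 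The final "in particular" statements then follow: in (1), $s(X^G)=\chi_c(\mathcal{X}^G_k)$ is nonzero if $\ell\nmid s(X)$, forcing $\mathcal{X}^G_k$ (hence, by \cite[2.3.5]{neron}, $X^G(K^{sh})$) to be nonempty; in (2), if $\sharp S(X^G)\neq 0$ in $\Z/(\ell,q-1)$ then by Proposition \ref{prop-serrefinite} (using that $R$ is henselian with finite residue field) $X^G(K)$ is nonempty.

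It remains to deduce the general characteristic-zero case from the smooth and proper case. Here I would use Theorem \ref{thm-char0}: the motivic Serre invariant extends to a ring homomorphism $S\colon\KVar{K}\to\KR{k}/(\LL_k-1)$, and I want an equivariant refinement — a homomorphism $\KG{K}\to \KR{k}/(\LL_k-1)$ sending $[X]$ to $S(X^G)$, built out of the smooth-proper case via equivariant weak factorization. The cleanest route is: by equivariant resolution of singularities and the scissor relations in $\KG{K}$, the class $[X]$ is an integral combination of classes $[Y]$ with $Y$ smooth and projective carrying a good $G$-action; additivity of $S(\cdot)$ on $X$ (Theorem \ref{thm-char0}) and additivity of the Euler characteristic / point count on the fixed loci (the fixed-locus functor is compatible with the stratification, since a $G$-stable closed subscheme $Z\subset X$ has $(X\setminus Z)^G = X^G\setminus Z^G$) reduce the congruence $s(X)\equiv s(X^G)$, resp.\ $\sharp S(X)\equiv\sharp S(X^G)$, to the same congruence for each smooth projective piece $Y$, which is exactly the case already handled.

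The main obstacle I anticipate is precisely this last reduction step: one needs a $G$-equivariant version of Bittner-type presentations / weak factorization to know that $\KG{K}$ is generated by smooth projective $G$-varieties \emph{and} that the relevant invariant ($X\mapsto S(X^G)$) is well-defined on $\KG{K}$ — i.e.\ that it respects the equivariant scissor relations and the blow-up relations. Verifying that $S((\cdot)^G)$ descends through these relations requires controlling the behaviour of weak N\'eron models of fixed loci under equivariant blow-ups, which is where the hypotheses ($\mathrm{char}\,K=0$ for resolution, $|G|$ prime to $p$ for smoothness of fixed loci) are all used; once that bookkeeping is in place the congruences are immediate from the orbit-counting lemmas.
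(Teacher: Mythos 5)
Your proposal is correct and takes essentially the same route as the paper: in the smooth and proper case one passes to a weak N\'eron $G$-model via Propositions \ref{prop-weakG} and \ref{prop-Gfix} and runs the orbit-counting congruences (Propositions \ref{prop-euler} and \ref{prop-finfield}) on the special fiber, and the characteristic-zero case is reduced to this one by the equivariant Bittner generation of $\KG{K}$ by smooth proper classes. The only remark is that the ``obstacle'' you flag at the end is not one: since $(\cdot)^G:\KG{K}\to \KVar{K}$ is a ring morphism (it respects the scissor relations because $(X\setminus Z)^G=X^G\setminus Z^G$) and $S$ is already defined on $\KVar{K}$ by Theorem \ref{thm-char0}, the composite is automatically well-defined, so no verification of blow-up relations or of the behaviour of weak N\'eron models under equivariant blow-ups is needed --- only the generation statement from Bittner's theorem.
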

\begin{proof}

 First, assume that $K$
has characteristic zero. Then all the members of the congruences
in (1) and (2) are additive with respect to equivariant closed
immersions of $K$-varieties with $G$-action. More precisely,
taking the $G$-fixed locus defines a ring morphism
$$(\cdot)^G:\KG{K}\to \KVar{K}:[X]\mapsto [X^G].$$
Since the equivariant Grothendieck group $\KG{K}$ is generated by
the classes of smooth and proper $K$-varieties
\cite[7.1]{bittner}, it suffices to consider the case where $X$ is
smooth and proper.

So suppose that $K$ has characteristic $\geq 0$, that $\ell$ is
invertible in $k$, and that $X$ is smooth and proper over $K$. By
Proposition \ref{prop-weakG}, $X$ admits a weak N\'eron $G$-model
$\mathcal{X}$ over $R$. By Proposition \ref{prop-Gfix}, we know
that $\mathcal{X}^G$ is a weak N\'eron model for $X^G$. By
definition, one has $s(X)=\chi_c(\mathcal{X}_k)$ and
$s(X^G)=\chi_c(\mathcal{X}_k^G)$, so that (1) follows from
Proposition \ref{prop-euler}.

Likewise, if $k$ is finite, of cardinality $q$, one has by
definition $\sharp S(X)=|\mathcal{X}(k)|$ and $\sharp
S(X^G)=|\mathcal{X}^G(k)|$ in $\Z/(q-1)$. On the other hand, by
 Proposition \ref{prop-finfield}, one has
$|\mathcal{X}(k)| \equiv  |\mathcal{X}^G(k)| \ {\rm mod}  \ \ell$.
 Combined with Proposition \ref{prop-serrefinite}, this shows (2).
\end{proof}

\begin{cor}\label{cor-finite} Let the notations and assumptions
be as in Theorem \ref{thm-rat}. Assume in addition that $K$ has
characteristic zero. Let $n$ be a non-zero natural number.
\begin{enumerate}
\item If $k$ is algebraically closed and of characteristic $\neq
\ell$, then any $G$-action on $\A^n_K$  admits a $K$-rational
fixed point. \item If $k$ is finite, of cardinality $q$, and
$\ell$ divides $(q-1)$, then any $G$-action on $\A^n_K$ admits a
$K$-rational fixed point.
\end{enumerate}
\end{cor}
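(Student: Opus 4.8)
The plan is to apply Theorem \ref{thm-rat} to the $K$-variety $X=\A^n_K$ equipped with the prescribed $G$-action. This action is automatically good, since $\A^n_K$ is affine (hence quasi-projective over the affine scheme $\Spec K$). As $K$ has characteristic zero by hypothesis, the case ``$K$ has characteristic zero'' of Theorem \ref{thm-rat} applies, and the only additional ingredient needed is the value of the motivic Serre invariant of affine space.

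First I would compute $S(\A^n_K)$. By Theorem \ref{thm-char0}, since $K$ has characteristic zero there is a ring homomorphism $S\colon \KVar{K}\to \KR{k}/(\LL_k-1)$ sending $\LL_K$ to $1$. Because $[\A^n_K]=\LL_K^{\,n}$ in $\KVar{K}$ by the scissor relations, we get $S(\A^n_K)=S(\LL_K)^n=1$. Consequently $s(\A^n_K)=\chi_c(S(\A^n_K))=\chi_c(1)=1$, and if $k$ is finite of cardinality $q$ then $\sharp S(\A^n_K)=1$ in $\Z/(q-1)$.

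For part (1): the residue field $k$ is algebraically closed, hence of characteristic $\neq\ell$ by assumption, so Theorem \ref{thm-rat}(1) applies and yields $s(\A^n_K)\equiv s((\A^n_K)^G)\bmod\ell$; since $\ell\nmid s(\A^n_K)=1$, we conclude that $(\A^n_K)^G(K^{sh})$ is non-empty. Because $k$ is algebraically closed, its separable closure is $k$ itself, so $R^{sh}=R$ and $K^{sh}=K$; therefore $(\A^n_K)^G(K)\neq\emptyset$, which gives a $K$-rational fixed point. For part (2): since $\ell$ divides $q-1$ we have $\gcd(\ell,q-1)=\ell$, so Theorem \ref{thm-rat}(2) gives $\sharp S((\A^n_K)^G)\equiv \sharp S(\A^n_K)=1\pmod\ell$; in particular the residue class of $\sharp S((\A^n_K)^G)$ in $\Z/(\ell,q-1)=\Z/\ell$ is non-zero, so $(\A^n_K)^G(K)$ is non-empty, again by the last assertion of Theorem \ref{thm-rat}.

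There is essentially no serious obstacle: the corollary is a formal consequence of Theorem \ref{thm-rat} once the normalization $S(\A^n_K)=1$ is in place. The only points requiring a moment's care are that the $G$-action on affine space is good, that $K^{sh}=K$ when the residue field is algebraically closed (so that a $K^{sh}$-point of the fixed locus is already a $K$-point), and, in part (2), the elementary remark that a class in $\Z/(q-1)$ which is congruent to $1$ modulo $\ell$ cannot vanish.
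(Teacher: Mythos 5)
Your proof is correct and follows exactly the route the paper intends: the corollary is stated without proof as an immediate consequence of Theorem \ref{thm-rat}, and the missing ingredients you supply (the action on $\A^n_K$ is good, $S(\A^n_K)=1$ because $S$ is a ring morphism sending $\LL_K$ to $1$, and $K^{sh}=K$ when $k$ is algebraically closed) are precisely the ones the authors use implicitly, e.g.\ in the introduction where they note $s(\A^n_K)=1$. No issues.
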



\begin{question}
If $k$ is finite, can we get rid of the condition that $\ell$
divides $(q-1)$ in Corollary \ref{cor-finite}?
\end{question}

Even if $R$ has mixed characteristic, Theorem \ref{thm-rat} is
false if we do not assume that $\ell$ is invertible in $k$, as is
shown by the following example.

\begin{example}
Let $k$ be an algebraic closure of $\mathbb{F}_3$, let $R=W(k)$,
$K=\mathrm{Frac}(R)$ and $G=\Z/3$. Denote by $g$ the class of $1$
in $G$. Consider the action of $G$ on
$$X=\mathrm{Proj}\,K[x,y]$$ defined by
$$g(x:y)=(y:-x-y).$$
Let $K^a$ be an algebraic closure of $K$. The points of $X^G(K^a)$
are the points with homogeneous coordinates $(1:\omega)$ with
$\omega^2+\omega+1=0$. Since $\mu_3(K)=\{1\}$ we find that
$X^G(K)$ is empty. It follows that $S(X^G)=0$, while $S(X)=2$ in
$\KR{k}/(\LL_k-1)$.

Note that the $G$-action on $X$ extends uniquely to an action of
$G$ on the weak N\'eron model
$$\mathcal{X}=\mathrm{Proj}\,R[x,y]$$ of $X$. The $G$-action on
$\mathcal{X}_k$ has a fixed point (namely $(1:1)$) but this fixed
point does not lift to an element of $\mathcal{X}^G(R)=X^G(K)$.
Such a situation cannot occur when $\ell$ is invertible in $k$, by
smoothness of $\mathcal{X}^G$ over $R$.
\end{example}

\section{The trace formula}\label{sec-trace}
 Let $K$, $R$
and $k$ be as in Section \ref{sec-notation}.
 If $k$ is
algebraically closed and of characteristic zero, the rational
volume $s(X)$ of a $K$-variety $X$ admits a cohomological
interpretation in terms of a trace formula, which is similar to
the Grothendieck-Lefschetz-Verdier trace formula that we've
discussed in Section \ref{verdier}.

\begin{theorem}\label{thm-trace} Let $K$ be a strictly henselian
discretely valued field with residue field $k$, and assume that
$k$ has characteristic zero. Let $K^a$ be an algebraic closure of
$K$, and choose a topological generator $\varphi$ of the absolute
Galois group $G(K^a/K)$ of $K$. Then for every $K$-variety $X$, one
has
$$s(X)=\sum_{i\geq 0}(-1)^i
\mathrm{Trace}(\varphi\,|\,H^i_c(X\times_K K^a,\Q_\ell))$$ for
every prime $\ell$.
\end{theorem}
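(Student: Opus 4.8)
The plan is to reduce the identity to the case of a smooth and proper $K$-variety, and then to compute both sides on a regular proper $R$-model with strict normal crossings special fibre, using the explicit local structure of (tame) nearby cycles. First I would observe that both sides are additive for closed--open decompositions of $K$-varieties: for the left-hand side because $S(\cdot)$ is a ring homomorphism on $\KVar{K}$ (Theorem~\ref{thm-char0}; note $K$ has characteristic zero since $k$ does) and $\chi_c$ is additive on the target, and for the right-hand side because, for $Z\subset X$ closed with open complement $U$, the long exact sequence of $\ell$-adic cohomology with compact supports for $(X,Z)$ is $\varphi$-equivariant, so $\sum_i(-1)^i\mathrm{Trace}(\varphi\mid H^i_c(\cdot\times_K K^a,\Q_\ell))$ is additive. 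Hence both sides descend to additive maps $\KVar{K}\to\Z$, and since $\KVar{K}$ is generated by classes of smooth proper $K$-varieties (Hironaka's resolution and the scissor relations), it suffices to treat $X$ smooth and proper.

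So fix $X$ smooth and proper. Since $R$ is excellent of equicharacteristic zero and $k$ is algebraically closed (being separably closed of characteristic zero), Nagata's embedding theorem, normalization and resolution of singularities yield a regular proper $R$-model $\mathcal{X}$ of $X$ with $\mathcal{X}_k=\sum_{i\in J}N_iE_i$ a strict normal crossings divisor with smooth components. Write $E_i^{\circ}=E_i\setminus\bigcup_{j\neq i}E_j$ and, for $\emptyset\neq I\subseteq J$, $E_I^{\circ}=\big(\bigcap_{i\in I}E_i\big)\setminus\bigcup_{j\notin I}E_j$. The $R$-smooth locus $\Sm(\mathcal{X})$ has special fibre $\coprod_{i:\,N_i=1}E_i^{\circ}$ (a point lies in $\Sm(\mathcal{X})$ precisely when it lies on a single component of multiplicity one), and $\Sm(\mathcal{X})$ is a weak N\'eron model of $X$: by properness and the valuative criterion $\mathcal{X}(R^{sh})=X(K^{sh})$, and computing the valuation of the uniformiser along a section shows that every $R^{sh}$-section of $\mathcal{X}$ meets $\mathcal{X}_k$ inside $\Sm(\mathcal{X})_k$. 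Since $X$ is smooth and proper, hence of type (N), Theorem~\ref{thm-LoSe} and the definition of the rational volume give $s(X)=\chi_c(\Sm(\mathcal{X})_k)=\sum_{i:\,N_i=1}\chi_c(E_i^{\circ})$.

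For the right-hand side, properness gives $H^i_c(X\times_K K^a,\Q_\ell)=H^i(X\times_K K^a,\Q_\ell)$, and since $\mathcal{X}/R$ is proper and $k$ is algebraically closed, proper base change identifies this, $\varphi$-equivariantly, with the nearby-cycle cohomology $H^i(\mathcal{X}_k,R\psi_\eta\Q_\ell)$. Because $\varphi$ acts on $R\psi_\eta\Q_\ell$ as an automorphism lying over the identity of the proper $k$-scheme $\mathcal{X}_k$, a Lefschetz--Verdier type additivity argument — stratifying $\mathcal{X}_k$ by the locally closed strata $E_I^{\circ}$, on each of which the cohomology sheaves of $R\psi_\eta\Q_\ell$ are lisse with locally constant $\varphi$-action — collapses the global alternating trace $\sum_i(-1)^i\mathrm{Trace}(\varphi\mid H^i(\mathcal{X}_k,R\psi_\eta\Q_\ell))$ to $\sum_{\emptyset\neq I\subseteq J}\chi_c(E_I^{\circ})\,\Lambda_I$, where $\Lambda_I=\sum_q(-1)^q\mathrm{Trace}(\varphi\mid\mathcal{H}^q(R\psi_\eta\Q_\ell)_x)$ is the local Lefschetz number at a point $x\in E_I^{\circ}$. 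A'Campo's computation of nearby cycles on a normal crossings model — the local Milnor fibre at $x$ is, up to homotopy, governed by the equation $\prod_{i\in I}t_i^{N_i}=\pi$ — shows that $\Lambda_I=0$ unless $I=\{i\}$ is a singleton with $N_i=1$, in which case the local Milnor fibre is a single reduced point and $\Lambda_{\{i\}}=1$; for $|I|\geq 2$, or for $N_i>1$, the relevant monodromy acts on the cohomology of a positive-dimensional torus, respectively by a nontrivial cyclic permutation of $N_i$ points, and the alternating trace vanishes. Substituting, the right-hand side becomes $\sum_{i:\,N_i=1}\chi_c(E_i^{\circ})=s(X)$, as computed above.

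The main obstacle is the nearby-cycle input of the last step: both the local A'Campo computation of $\Lambda_I$ and, more seriously, the passage from the global alternating trace on $H^*(\mathcal{X}_k,R\psi_\eta\Q_\ell)$ to the stratified sum of local terms. The latter is where one genuinely exploits that $\varphi$ acts fibrewise over $\mathcal{X}_k$, so that each stratum's contribution can be separated off; making it rigorous requires controlling the cohomology sheaves of $R\psi_\eta\Q_\ell$ and their monodromy along the strata of the snc model, and this is precisely the part carried out in detail in \cite{nicaise}. By contrast, the reduction to the smooth proper case and the identity $s(X)=\sum_{N_i=1}\chi_c(E_i^{\circ})$ are formal, given resolution of singularities and the Loeser--Sebag theory already invoked in Theorem~\ref{thm-LoSe}.
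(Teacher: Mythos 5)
Your outline is correct, but it takes a different route from the paper: the paper's entire proof is a two-line reduction, namely ``if $K$ is complete this is \cite[6.5]{nicaise}'', followed by the observation that both sides of the formula are unchanged under passage to the completion $\widehat{K}$ (using Lemma \ref{lemm-complete}, invariance of $\ell$-adic cohomology under extension of algebraically closed fields, and the isomorphism $G(\widehat{K}^a/\widehat{K})\to G(K^a/K)$). What you have written is, in effect, a reconstruction of the proof of the cited theorem itself: reduction to the smooth proper case via additivity of both sides and generation of $\KVar{K}$ by smooth proper classes, then the comparison $s(X)=\sum_{N_i=1}\chi_c(E_i^{\circ})$ on an snc model against A'Campo's computation of the alternating trace of the monodromy on nearby cycles. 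Your version works directly over the henselian (non-complete) base, which is legitimate since an equicharacteristic-zero DVR is excellent (so resolution applies to a Nagata compactification of a model) and the nearby-cycles formalism is available over any strictly henselian trait; this buys you a self-contained picture of \emph{why} the formula holds and dispenses with the completion step, at the price of having to invoke as a black box the one genuinely hard ingredient --- the localization of the global trace on $H^*(\mathcal{X}_k,R\psi_\eta\Q_\ell)$ into the stratified sum $\sum_{I}\chi_c(E_I^{\circ})\Lambda_I$ --- which is exactly the content of \cite{nicaise} that the paper's citation absorbs wholesale. You flag this honestly, and the remaining steps (the identification $\Sm(\mathcal{X})_k=\coprod_{N_i=1}E_i^{\circ}$, the valuation argument showing sections land in the smooth locus, and the vanishing of $\Lambda_I$ except for singletons of multiplicity one) are all correct. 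So there is no gap, only a division of labour different from the paper's: the paper outsources everything to \cite[6.5]{nicaise} and handles only the henselian-versus-complete issue, while you handle the geometry and outsource only the sheaf-theoretic localization.
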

\begin{proof}
If $K$ is complete, this is \cite[6.5]{nicaise}. The proof remains
valid in the general case. Alternatively, one can observe that
both terms of the trace formula are invariant under completion
of $K$, by Lemma \ref{lemm-complete}, invariance of $\ell$-adic
cohomology under algebraically closed field extensions, and the
fact that $G(\widehat{K}^a/\widehat{K})\to G(K^a/K)$ is an
isomorphism.
\end{proof}

There are partial results in this direction in arbitrary characteristic; see
\cite[\S\,4]{nicaise-tame}.

\begin{prop}\label{prop-acyc}
We keep the assumptions of Theorem \ref{thm-trace}. Let $\ell$ be
a prime, and $G$ a finite $\ell$-group. If $X$ is a smooth
 geometrically connected $K$-variety with good $G$-action such that
$$H^i(X\times_K K^a,\Z/\ell)=0$$ for all $i>0$, then
$$s(X)=s(X^G)=1.$$ In particular, $X^G$ has a
$K$-rational point.
\end{prop}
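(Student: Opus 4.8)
The plan is to combine the trace formula of Theorem \ref{thm-trace} with Smith theory (Theorem \ref{thm-smith}) in the precise way that the hypotheses are set up to allow. First I would observe that Proposition \ref{prop-finfield}'s cohomological analogue is exactly Theorem \ref{thm-smith}: since $X$ is connected, non-empty, and $\Z/\ell$-acyclic in positive degrees over $K^a$, that theorem tells us that $X^G\times_K K^a$ is connected, non-empty, and also satisfies $H^i(X^G\times_K K^a,\Z/\ell)=0$ for all $i>0$. (Here one uses that forming the fixed locus commutes with the base change $K\to K^a$, which is part of the setup in Section \ref{sec-notation}.) In particular $X^G$ is non-empty, hence so is $X$.

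Next I would compute both rational volumes via Theorem \ref{thm-trace}. The point is that $\Z/\ell$-acyclicity together with the universal coefficient / comparison machinery forces $H^i_c(X\times_K K^a,\Q_\ell)$ to be concentrated in a single degree: from $H^i(X\times_K K^a,\Z/\ell)=0$ for $i>0$ and $X$ connected one deduces, using that $X\times_K K^a$ is a smooth connected variety of some dimension $d$ over an algebraically closed field, that $H^i_c(X\times_K K^a,\Q_\ell)$ vanishes except for $i=2d$, where it is one-dimensional and the Galois action on it is through the cyclotomic character evaluated trivially — more precisely, $\varphi$ acts on $H^{2d}_c$ by $1$ because $K$ is strictly henselian with residue characteristic zero, so the inertia acts trivially on $\Q_\ell(-d)$ in this setting. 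Therefore $\sum_i(-1)^i\mathrm{Trace}(\varphi\mid H^i_c(X\times_K K^a,\Q_\ell))=1$, and by Theorem \ref{thm-trace} we get $s(X)=1$. The identical argument applied to $X^G$, now legitimate because Smith theory handed us the same acyclicity and connectedness hypotheses for $X^G$, gives $s(X^G)=1$. This yields $s(X)=s(X^G)=1$.

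Finally, to get the $K$-rational point: $X^G$ is non-empty by the above, and since $s(X^G)=1$ in particular $s(X^G)\neq 0$. For the smooth-and-proper case this is immediate — a smooth proper $K$-variety with a $K^{sh}$-point over a strictly henselian base automatically has a $K$-point — but more directly one argues as in the proof of Proposition \ref{prop-serrefinite}: $X^G$ is smooth, a weak N\'eron model $\cX^G$ has non-empty special fiber (its $\ell$-adic Euler characteristic is $1\neq 0$), and $R$ being henselian every closed point of $(\cX^G)_k$ lifts through the smooth morphism $\cX^G\to\Spec R$ to an $R$-point, i.e. to a point of $X^G(K)$. (Strictly, since the residue field here is algebraically closed there is a rational point on the special fiber as soon as it is non-empty.)

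The main obstacle I anticipate is the middle step: pinning down that $\Z/\ell$-acyclicity in positive degrees genuinely forces $\ell$-adic compactly-supported cohomology to be concentrated in the top degree with trivial $\varphi$-action. One has to pass from mod-$\ell$ cohomology to $\Q_\ell$-cohomology (so one wants something like: the variety has the $\Z/\ell$-cohomology of an affine space, hence — being smooth and connected of dimension $d$ over $\overline{K}$ — its $\Q_\ell$-cohomology with compact supports is that of $\A^d$), and one must check the monodromy operator $\varphi$ acts trivially on the one surviving group, which is where strict henselianity and residue characteristic zero are used. Everything else is a direct citation of Theorems \ref{thm-trace} and \ref{thm-smith} and the argument of Proposition \ref{prop-serrefinite}.
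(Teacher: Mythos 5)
Your proposal is correct and follows essentially the same route as the paper: Smith theory (Theorem \ref{thm-smith}) transfers acyclicity and geometric connectedness to $X^G$, and then the trace formula (Theorem \ref{thm-trace}) gives $s(X)=s(X^G)=1$, the triviality of the $\varphi$-action on $H^{2d}_c\cong\Q_\ell(-d)$ coming from residue characteristic zero exactly as you say. The one step you flag as delicate --- concentration of $H^\bullet_c(X^G\times_K K^a,\Q_\ell)$ in degree $2d$ --- is handled in the paper precisely as you anticipate: pass from $\Z/\ell$-acyclicity to $\Q_\ell$-acyclicity of ordinary cohomology and then apply Poincar\'e duality, which is legitimate because $X^G$ is smooth by Proposition \ref{prop-smooth}.
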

\begin{proof}
By Theorem \ref{thm-smith}, we know that
$$H^i(X^G\times_K K^a,\Z/\ell)=0$$ for all $i>0$, and that $X^G$ is geometrically connected.
 It follows that
$$H^i(X^G\times_K K^a,\Q_\ell)=0$$ for all $i>0$.
 The fixed locus $X^G$ is smooth over $K$ by Proposition
\ref{prop-smooth}, so that we can apply Poincar\'e duality, and we
obtain that
$$H^i_c(X^G\times_K K^a,\Q_\ell)=0$$ for all $i\neq 2d$, where $d$ denotes the dimension of $X^G$.
 Since $G(K^a/K)$ acts trivially on
 $$H^{2d}_c(X^G\times_K K^a,\Q_\ell)\cong \Q_\ell(-d),$$ Theorem
 \ref{thm-trace} implies that $s(X^G)=1$. The same argument, where one assumes the $G$-action on $X$ to be trivial, shows
 that $s(X)=1$.
\end{proof}

\begin{cor}
We keep the assumptions of Theorem \ref{thm-trace}.  Let $n$ be a
non-zero natural number. Let $\ell$ be a prime, and $G$ a finite
$\ell$-group.
 For any action of $G$ on $X=\A^n_K$, one has
$$s(X^G)=1.$$
\end{cor}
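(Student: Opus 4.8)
The plan is to apply Proposition~\ref{prop-acyc} directly, so the only real task is to verify its hypotheses for $X=\A^n_K$. First I would observe that $\A^n_K$ is smooth and geometrically connected over $K$ — this is immediate. Next I need the vanishing $H^i(\A^n_K\times_K K^a,\Z/\ell)=0$ for all $i>0$; since $K^a$ is algebraically closed of characteristic zero, $\A^n_{K^a}$ is $\mathbb{A}^n$ over an algebraically closed field, and its étale cohomology with $\Z/\ell$-coefficients is that of a point (affine space is cohomologically trivial: one can cite the homotopy invariance of étale cohomology, or comparison with singular cohomology since $K^a$ embeds in $\C$ after choosing an isomorphism, or simply the fact that $\A^n$ retracts to a point). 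So the hypotheses of Proposition~\ref{prop-acyc} hold.

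Then Proposition~\ref{prop-acyc} applies verbatim and yields $s(X^G)=s(X)=1$, which is exactly the assertion. I would phrase the proof in essentially one or two sentences: affine space over $K$ is smooth, geometrically connected, and has trivial reduced $\ell$-torsion étale cohomology over $K^a$, so Proposition~\ref{prop-acyc} gives $s((\A^n_K)^G)=1$.

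The only place where any care is needed is the cohomological vanishing statement, and even there it is completely standard; the main "obstacle," such as it is, is simply making sure to cite the right fact (homotopy invariance of étale cohomology, or proper/smooth base change reducing to $\A^n$ over $\overline{\F}_p$ or $\C$, where the cohomology of affine space is well known). Everything else is an invocation of Proposition~\ref{prop-acyc}, which in turn rests on Theorem~\ref{thm-smith} (Smith theory) and Theorem~\ref{thm-trace} (the trace formula). Since those are already in hand, there is essentially nothing left to prove, and the corollary is genuinely a one-line consequence.

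Here is the proof I would write.

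\begin{proof}
The $K$-variety $X=\A^n_K$ is smooth and geometrically connected over $K$. Moreover, since $k$ has characteristic zero, so does $K$, hence so does $K^a$; therefore $X\times_K K^a\cong \A^n_{K^a}$ is affine space over an algebraically closed field of characteristic zero, and by homotopy invariance of étale cohomology one has
$$H^i(X\times_K K^a,\Z/\ell)=0\quad\text{for all }i>0.$$
Thus all the hypotheses of Proposition~\ref{prop-acyc} are satisfied, and we conclude that $s(X^G)=1$.
\end{proof}
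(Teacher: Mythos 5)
Your proof is correct and is exactly the argument the paper intends: the corollary is stated as an immediate consequence of Proposition~\ref{prop-acyc}, whose hypotheses you verify for $\A^n_K$ (smooth, geometrically connected, with the action automatically good since $\A^n_K$ is affine, and with vanishing higher étale cohomology over $K^a$ by homotopy invariance). Nothing further is needed.
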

\begin{question} Does this Corollary also hold if $K$ has mixed
characteristic and $\ell$ is invertible in $k$? Is it true that
one has the stronger equality
$$S(X^G)=1$$ in $K_0(\Var_k)/(\LL_k-1)$ if $k$ has characteristic zero?
\end{question}

\section{Relation with conjectures in affine geometry}
It is clear that Serre's question (\ref{qu-serre}) is closely related to the
study of the structure of the automorphism group of affine spaces.
If the base field $F$ is the field $\C$ of complex numbers, this
study is an important part of complex affine geometry. The
following question is mentioned in \cite{kraft}.

\begin{question}[Linearization Problem]\label{qu-lin}
Let $n$ be a non-zero natural number, and let $g$ be an
automorphism of $\A^n_{\C}$ of finite order. Can we always find a
coordinate system on $\A^n_{\C}$ such that $g$ becomes linear? In
other words, is $g$ conjugate to an element of $GL_{n}(\C)$ in the
automorphism group $\Aut_{n,\C}$ of $\A^n_{\C}$?
\end{question}

The question has been answered affirmatively if $n<3$, but it is
open for $n\geq 3$. In the cases $n=1$ and $n=2$, the automorphism
group $\Aut_{n,\C}$ is well-understood, but its structure is much
more complicated when $n\geq 3$. We've seen in Section
\ref{subsec-lowdim} that, for every field $F$, the automorphisms
of $\A^1_F$ are precisely the invertible affine transformations.
In dimension $n=2$, the theorem of Jung-van der Kulk \cite{vdKulk}
states that the automorphism group $\Aut_{2,F}$ of $\A^2_F$ is the
amalgamated product of the subgroup $\Aff_{2,F}$ of invertible
affine transformations and the subgroup $\mathscr{J}_{2,F}$ of
so-called triangular automorphisms over their intersection
$\mathscr{B}_{2,F}=\Aff_{2,F}\cap \mathscr{J}_{2,F}$. The group
$\mathscr{J}_{2,F}$ is also called the Jonqui\`ere subgroup. For a
geometric proof and some historical background, we refer to
\cite{lamy}. The theorem of Jung-van der Kulk implies by an
elementary induction argument on the length of an element of the
amalgamated product
$$\Aff_{2,F}\ast_{\mathscr{B}_{2,F}}\mathscr{J}_{2,F}$$
that every automorphism $g$ of $\A^2_F$ of finite order $N$ is
conjugate  in $\Aut_{2,F}$ to an element of $\Aff_{2,F}$ or an
element of $\mathscr{J}_{2,F}$ (see the discussion of Theorem 7 in
\cite{kraft}). If $N$ is not divisible by the characteristic of
$F$, then \cite[2.1]{ivanenko} implies that $g$ is conjugate to an
element of $\Aff_{2,F}$. Since we know by Corollary
\ref{cor-euler} that the automorphism $g$ admits a fixed point
over a separable closure of $F$, it follows that $g$ is
linearizable, i.e., conjugate to an element of $GL_2(F)$. This
yields an alternative proof of Theorem \ref{thm-dim2}.


 In
dimension $3$, we get the following conditional result.

\begin{prop}
Assume that the Linearization Problem has a positive solution for
$n=3$. Let $F$ be a field of characteristic zero, and let $G$ be a
finite solvable group. For any action of $G$ on $X=\A^3_F$, the
fixed locus $X^G$ is isomorphic to $\A^m_F$ with $m\in
\{0,1,2,3\}$. In particular, $X^G(F)$ is non-empty.
\end{prop}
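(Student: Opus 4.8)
The plan is to induct on $|G|$, the purpose of the induction being to reduce to $G=\Z/p$ and there to apply the assumed solution of the Linearization Problem. The case $|G|=1$ is trivial ($X^G=\A^3_F$), so suppose $|G|>1$. As $X=\A^3_F$ is affine, every $G$-action appearing below is automatically good. Since $G$ is solvable it has a minimal normal subgroup $H$, which is elementary abelian; unless $G$ is already cyclic of prime order we may arrange $H\subsetneq G$ (if $H=G=(\Z/p)^{k}$ with $k\geq 2$, replace $H$ by a one-dimensional subspace). Put $Y=X^{H}$; then $Y$ is affine, $G$-stable, $H$ acts trivially on it, so $G/H$ acts on $Y$ and $Y^{G/H}=X^{G}$. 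By the induction hypothesis $Y\cong\A^{j}_{F}$ for some $j$. If $j=3$, then $H$ acts trivially on $X$, the $G$-action factors through $G/H$, and we conclude by induction. If $j=2$, we apply Corollary \ref{cor-dim2} to the $G/H$-action on $\A^{2}_{F}$. If $j=1$, we use that $\Aut_{1,F}$ is the affine group, so in characteristic zero the translation part of the finite image of $G/H$ is trivial, that image is cyclic, and hence $Y^{G/H}$ is $\A^{0}_{F}$ or $\A^{1}_{F}$. If $j=0$ there is nothing to prove. In each case $X^{G}\cong\A^{m}_{F}$ with $m\in\{0,1,2,3\}$, and we are left with the base case $G=\Z/p$.

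For the base case, fix a generator $g$, an automorphism of $\A^{3}_{F}$ of exact order $p>1$. First I would spread $g$ out: the coefficients of $g$ and $g^{-1}$, and the relation $g^{p}=\mathrm{id}$, are defined over a finitely generated $\Q$-subalgebra $A\subseteq F$, whose fraction field $L$ is a finitely generated extension of $\Q$; fix an embedding $L\hookrightarrow\C$ and extend it to $\overline{L}\hookrightarrow\C$. Set $Z=(\A^{3}_{A})^{g_{A}}$. Since $(\cdot)^{\Z/p}$ commutes with base change, $Z\times_{A}F=X^{G}$ and $Z\times_{A}\C=(\A^{3}_{\C})^{g_{\C}}$, and $g_{\C}$ still has exact order $p$. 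By the assumed positive answer to the Linearization Problem (Question \ref{qu-lin}) for $n=3$, $g_{\C}$ is conjugate in $\Aut_{3,\C}$ to some $\lambda\in GL_{3}(\C)$, so $(\A^{3}_{\C})^{g_{\C}}$ is the image under the conjugating automorphism of the linear subspace $\ker(\lambda-\mathrm{id})$; hence $(\A^{3}_{\C})^{g_{\C}}\cong\A^{m}_{\C}$ with $m=\dim_{\C}\ker(\lambda-\mathrm{id})\leq 2$ (as $\lambda\neq\mathrm{id}$).

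Next I would transport this back to $F$. A Lefschetz-principle argument — spread the isomorphism $Z\times_{A}\C\cong\A^{m}_{\C}$ out over a finitely generated $\overline{L}$-subalgebra of $\C$, then specialise at a closed point, whose residue field is $\overline{L}$ — gives $Z\times_{A}\overline{L}\cong\A^{m}_{\overline{L}}$, and therefore $X^{G}\times_{F}\overline{F}\cong\A^{m}_{\overline{F}}$. Now $V:=X^{G}$ is smooth over $F$ by Proposition \ref{prop-smooth} and becomes $\A^{m}_{\overline{F}}$ over $\overline{F}$ with $m\leq 2$. Since forms of affine space of dimension at most two over a field of characteristic zero are trivial — immediately for $m=0$; for $m=1$ because $V$ has smooth projective completion a conic whose single point at infinity is necessarily $F$-rational, forcing the conic to be $\Pro^{1}_{F}$; for $m=2$ by Kambayashi's theorem that the affine plane has no nontrivial forms — we conclude $V\cong\A^{m}_{F}$. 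This finishes the base case and hence the induction, and $\A^{m}_{F}$ has an $F$-rational point.

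The main difficulty is exactly this transfer of the Linearization Problem from $\C$, where it is hypothesised, to the given field $F$: the conjugating automorphism over $\C$ has no reason to descend, and what makes the descent feasible is that the induction has cut the problem down to a fixed locus of dimension $\leq 2$, the only range in which forms of affine space are classified (for $m=1$ this is classical, for $m=2$ it is Kambayashi's theorem). If one instead reads ``positive solution of the Linearization Problem for $n=3$'' as holding over every field of characteristic zero, this last step is unnecessary and $X^{G}$ is directly a coset of a linear subspace of $\A^{3}_{F}$.
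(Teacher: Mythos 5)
Your proof is correct and follows essentially the same route as the paper: reduce to a cyclic group via solvability together with the dimension $\leq 2$ results, linearize over $\C$, and conclude by the triviality of forms of $\A^m$ for $m\leq 2$ (the conic argument for $m=1$, Kambayashi for $m=2$). The only difference is cosmetic: you spell out the spreading-out/Lefschetz-principle steps that the paper compresses into ``we may assume that $F$ is a subfield of $\C$,'' which is a welcome clarification rather than a deviation.
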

\begin{proof}
We may assume that $F$ is a subfield of $\C$, and that $G$ acts
non-trivially on $X$. By Proposition \ref{prop-dim1} and Theorem
\ref{thm-dim2} and an induction argument as in the proof of
Proposition \ref{prop-dim1}, we may suppose that $G$ is cyclic.
 Let $g$ be a generator.  By our assumption in the statement of the
proposition, the action of $g$ on $\A^3_{\C}$ can be linearized by
a suitable choice of coordinates. This implies that $X^G\times_F
\C$ is isomorphic to $\A^m_{\C}$, with $m\in \{0,1,2\}$. But
$\A^m_{\C}$ has no non-trivial forms over $F$. For $m=0$ this is
obvious. For $m=1$ it can be shown as in the beginning of the
proof of Lemma \ref{lemm-auto}(1):  if $C^0$ is a form of
$\mathbb{A}^1$ over $F$, and $C$ is a smooth compactification,
then singular cohomology of $C$ over  $\C$ implies that
$C\setminus C^0$ is geometrically irreducible, thus is a rational
point.  For $m=2$ it is proven in \cite{shafarevich} and
\cite{kambayshi}.
\end{proof}

\begin{question}\label{qu-fixaffine}
Let $G$ be a finite solvable group, and let $F$ be a field of
characteristic zero. Is it true that, for every integer $n>0$ and
every $G$-action on $X=\A^n_F$, the fixed locus $X^G$ is
isomorphic to $\A^m_F$ for some $m\in \{0,\ldots, n\}$?
\end{question}

 This would be compatible with
the results in Sections \ref{sec-serre} and \ref{sec-trace}, in
particular Corollary \ref{cor-smith}. Note that, in order to show
that Question \ref{qu-fixaffine} has a positive answer, it
suffices to deal with the case where $G$ is a cyclic group of
prime order, by induction on the order of $G$.

 It seems that, in order
to
 answer Question \ref{qu-serre} in full generality, the
  tools of the previous sections do not suffice, and that one
  should exploit the specific geometric structure of the affine
  space.

\end{document}